\newcommand{\lefri}[1]{\left(#1\right)}
\newcommand{\dt}[0]{\mbox{d}t}
\newcommand{\dLdq}[0]{\frac{\partial L}{\partial q}}
\newcommand{\dLddq}[0]{\frac{\partial L}{\partial \dot{q}}}
\newcommand{\galqn}[0]{\tilde{q}_{n}}
\newcommand{\dgalqn}[0]{\dot{\tilde{q}}_{n}}
\newcommand{\galpn}[0]{\tilde{p}_{n}}
\newcommand{\optqn}[0]{\hat{q}_{n}}
\newcommand{\doptqn}[0]{\dot{\hat{q}}_{n}}
\newcommand{\truq}[0]{\bar{q}}
\newcommand{\dtruq}[0]{\dot{\bar{q}}}
\newcommand{\trup}[0]{\bar{p}}
\newcommand{\bj}[0]{b_{j}}
\newcommand{\bnj}[0]{b_{n_{j}}}
\newcommand{\cnjh}[0]{c_{n_{j}}h}
\newcommand{\EDL}[2]{L_{d}^{E}\lefri{#1,#2}}
\newcommand{\EDLh}[3]{L_{d}^{E}\lefri{#1,#2,#3}}
\newcommand{\GDL}[2]{L_{d}^{G}\lefri{#1,#2}}
\newcommand{\GDLh}[3]{L_{d}^{G}\lefri{#1,#2,#3}}
\newcommand{\GDLn}[0]{L_{d}^{G}}
\newcommand{\SDLN}[3]{L_{d}^{S}\lefri{#1,#2,#3}}
\newcommand{\CQ}[0]{C^{2}\lefri{\left[0,h\right],Q}}
\newcommand{\FdFSpace}[0]{\mathbb{M}^{n}\lefri{\left[0,h\right],Q}}
\newcommand{\HoQ}[0]{H_{0}^{1}\lefri{\left[0,h\right],Q}}
\newcommand{\SobNorm}[2]{\left\|#1\right\|_{W^{1,#2}\lefri{\left[0,h\right]}}}
\newcommand{\LNorm}[2]{\left\|#1\right\|_{L^{#2}\lefri{\left[0,h\right]}}}
\newcommand{\MNorm}[2]{\left\|#1\right\|_{#2}}
\newcommand{\ApproxC}[0]{C_{A}}
\newcommand{\ApproxK}[0]{K_{A}}
\newcommand{\SpecK}[0]{K_{s}}
\newcommand{\SpecC}[0]{C_{s}}
\newcommand{\QuadC}[0]{C_{g}}
\newcommand{\QuadK}[0]{K_{g}}
\newcommand{\CoerC}[0]{C_{f}}
\newcommand{\OptC}[0]{C_{op}}
\newcommand{\akC}[0]{C_{a}}
\newcommand{\LipC}[0]{C_{L}}
\newcommand{\LagLipC}[0]{L_{\alpha}}
\newcommand{\NoeC}[0]{C_{v}}
\newcommand{\truqargsf}[2]{\substack{q \in \CQ\\q\lefri{0} = #1, q\lefri{h} = #2}}
\newcommand{\galargsf}[2]{\substack{q_{n} \in \FdFSpace\\q_{n}\lefri{0} = #1, q_{n}\lefri{h} = #2}}
\newcommand{\galargsvf}[4]{\substack{q_{n} \in \FdFSpace\\#1 = #2, #3 = #4}}
\newcommand{\truqargs}[0]{\substack{q\lefri{0} = q_{k}\\q\lefri{h} = q_{k+1}\\q \in \CQ}} 
\newcommand{\quaderr}[2]{e_{q}\lefri{#1,#2}}
\newcommand{\ext}[0]{\operatornamewithlimits{ext}}
\newcommand{\argmin}[0]{\operatornamewithlimits{argmin}}
\newtheorem{theorem}{Theorem}[section]
\newtheorem{lemma}{Lemma}[section]
\begin{document}
\title{Spectral Variational Integrators}

\author[J.~Hall]{James Hall}
\address{Department of Mathematics\\
University of California, San Diego\\
9500 Gilman Drive \#0112\\
La Jolla, California 92093-0112, USA}
\email{j9hall@math.ucsd.edu}

\author[M.~Leok]{Melvin Leok}
\address{Department of Mathematics\\
University of California, San Diego\\
9500 Gilman Drive \#0112\\
La Jolla, California 92093-0112, USA}
\email{mleok@math.ucsd.edu}

\begin{abstract}
In this paper, we present a new variational integrator for problems in Lagrangian mechanics. Using techniques from Galerkin variational integrators, we construct a scheme for numerical integration that converges geometrically, and is symplectic and momentum preserving. Furthermore, we prove that under appropriate assumptions, variational integrators constructed using Galerkin techniques will yield numerical methods that are in a certain sense optimal, converging at the same rate as the best possible approximation in a certain function space. We further prove that certain geometric invariants also converge at an optimal rate, and that the error associated with these geometric invariants is independent of the number of steps taken. We close with several numerical examples that demonstrate the predicted rates of convergence.  
\end{abstract}

\maketitle

\allowdisplaybreaks

\section{Introduction}

There has been significant recent interest in the development of structure-preserving numerical methods for variational problems. One of the key points of interest is developing high-order symplectic integrators for Lagrangian systems. The generalized Galerkin framework has proven to be a powerful theoretical and practical tool for developing such methods. This paper presents a high-order Galerkin variational integrator that exhibits geometric convergence to the true flow of a Lagrangian system. In addition, this method is symplectic, momentum-preserving, and stable even for very large time steps.

Galerkin variational integrators fall into the general framework of discrete mechanics. For a general and comprehensive introduction to the subject, the reader is referred to \citet{MaWe2001}. Discrete mechanics develops mechanics from discrete variational principles, and, as Marsden and West demonstrated, gives rise to many discrete structures which are analogous to structures found in classical mechanics. By taking these structures into account, discrete mechanics suggests numerical methods which often exhibit excellent long term stability and qualitative behavior. Because of these qualities, much recent work has been done on developing numerical methods from the discrete mechanics viewpoint. See, for example, \citet{HaLuWa2006} for a broad overview of the field of geometric numerical integration, and \citet{MuOr2004, MaWe2001, PaCu2009} discuss the error analysis of variational integrators. Various extensions have also been considered, including, \citet{LaWe2006, LeZh2009} for Hamiltonian systems; \citet{FeMaOrWe2003} for nonsmooth problems with collisions; \citet{MaPaSh1998, LeMaOrWe2003} for Lagrangian PDEs; \citet{CoMa2001,McPe2006, FeZe2005} for nonholonomic systems; \citet{BoOw2009, BoOw2010} for stochastic Hamiltonian systems; \citet{LeLeMc2007, LeLeMc2009, BoMa2009} for problems on Lie groups and homogeneous spaces.

The fundamental object in discrete mechanics is the discrete Lagrangian \(L_{d}: Q \times Q \times \mathbb{R} \rightarrow \mathbb{R}\), where \(Q\) is a configuration manifold. The discrete Lagrangian is chosen to be an approximation to the action of a Lagrangian over the time step \(\left[0,h\right]\),%
\begin{align*}
L_{d}\lefri{q_{0},q_{1},h} \approx \ext_{\truqargsf{q_{0}}{q_{1}}} \int_{0}^{h} L\lefri{q,\dot{q}} \dt,
\end{align*}%
or simply \(L_{d}\lefri{q_{0},q_{1}}\) when \(h\) is assumed to be constant. Discrete mechanics is formulated by finding stationary points of a discrete action sum based on the sum of discrete Lagrangians,%
\begin{align*}
\mathbb{S}\lefri{\left\{q_{k}\right\}_{k=1}^{n}} = \sum_{k=1}^{n-1} L_{d}\lefri{q_{k},q_{k+1}} \approx \int_{t_{1}}^{t_{2}} L\lefri{q,\dot{q}}\dt.
\end{align*}
For Galerkin variational integrators specifically, the discrete Lagrangian is induced by constructing a discrete approximation of the action integral over the interval \(\left[0,h\right]\) based on a finite-dimensional function space and quadrature rule. Once this discrete action is constructed, the discrete Lagrangian can be recovered by solving for stationary points of the discrete action subject to fixed endpoints, and then evaluating the discrete action at these stationary points,%
\begin{align}
L_{d}\lefri{q_{0},q_{1},h} = \ext_{\galargsf{q_{0}}{q_{1}}} h\sum_{j=1}^{m} b_{j}L\lefri{q\lefri{c_{j}h},\dot{q}\lefri{c_{j}h}}. \label{discaction}
\end{align}%
Because the rate of convergence of the approximate flow to the true flow is related to how well the discrete Lagrangian approximates the true action, this type of construction gives a method for constructing and analyzing high-order methods. The hope is that the discrete Lagrangian inherits the accuracy of the function space used to construct it, much in the same way as standard finite-element methods. We will show that for certain Lagrangians, Galerkin constructions based on high-order approximation spaces do in fact result in correspondingly high order methods.

Significant work has already been done constructing and analyzing these types of Galerkin variational integrators. In \citet{Le04}, a number of different possible constructions based on the Galerkin framework are presented. In \citet{LeSh2011}, Hermite polynomials are used to construct globally smooth high-order methods. What separates this work from the work that precedes it is the use of a spectral approximation paradigm, which induces methods that exhibit geometric convergence. This type of convergence is established theoretically and demonstrated through numerical examples.

\subsection{Discrete Mechanics}

Before discussing the construction and convergence of spectral variational integrators, it is useful to review some of the fundamental results from discrete mechanics that are used in our analysis. We have already introduced the \emph{discrete Lagrangian} \(L_{d}:Q \times Q \times \mathbb{R} \rightarrow \mathbb{R}\),%
\begin{align*}
L_{d} \lefri{q_{0},q_{1},h} \approx \ext_{\truqargsf{q_{0}}{q_{1}}} \int_{0}^{h}L\lefri{q,\dot{q}}\dt.
\end{align*}%
and the \emph{discrete action sum},%
\begin{align*}
\mathbb{S}\lefri{\left\{q_{k}\right\}_{k=1}^{n}} = \sum_{k=1}^{n-1} L_{d}\lefri{q_{k},q_{k+1}} \approx \int_{t_{1}}^{t_{2}} L\lefri{q,\dot{q}}\dt.
\end{align*}%
Taking variations of the discrete action sum and using discrete integration by parts leads to the discrete Euler-Lagrange equations,%
\begin{align}
D_{2}L_{d}\lefri{q_{k-1},q_{k}} + D_{1}L_{d}\lefri{q_{k},q_{k+1}} = 0, \label{DEL}
\end{align}
where \(D_{1}\) denotes differentiation with respect to the first argument and \(D_{2}\) denotes differentiation with respect to the second argument. Given \(\lefri{q_{k-1},q_{k}}\), these equations implicitly define an update map, known as the \textit{discrete Lagrangian flow map}, \(F_{L_{d}}: Q \times Q \rightarrow Q \times Q\), given by \(F_{L_{d}}\lefri{q_{k-1},q_{k}} = \lefri{q_{k},q_{k+1}}\), where \(\lefri{q_{k-1},q_{k}},\lefri{q_{k},q_{k+1}}\) satisfy (\ref{DEL}). Furthermore, the discrete Lagrangian defines the \textit{discrete Legendre transforms}, \(\mathbb{F}^{\pm}L_{d}: Q\times Q \rightarrow T^{*}Q\):%
\begin{align*}
\mathbb{F}^{+}L_{d}&:\lefri{q_{0},q_{1}} \rightarrow \lefri{q_{1},p_{1}} = \lefri{q_{1},D_{2}L_{d}\lefri{q_{0},q_{1}}}, \\
\mathbb{F}^{-}L_{d}&:\lefri{q_{0},q_{1}} \rightarrow \lefri{q_{0},p_{0}} = \lefri{q_{0},-D_{1}L_{d}\lefri{q_{0},q_{1}}}.
\end{align*}%
Using the discrete Legendre transforms, we define the \textit{discrete Hamiltonian flow map}, \(\tilde{F}_{L_{d}}: T^{*}Q \rightarrow T^{*}Q\),%
\begin{align*}
\tilde{F}_{L_{d}} &: \lefri{q_{0},p_{0}} \rightarrow \lefri{q_{1},p_{1}} = \mathbb{F}^{+}L_{d}\lefri{\lefri{\mathbb{F}^{-}L_{d}}^{-1}\lefri{q_{0},p_{0}}}. 
\end{align*}%
The following commutative diagram illustrates the relationship between the discrete Hamiltonian flow map, discrete Lagrangian flow map, and the discrete Legendre transforms,%
\begin{align*}
\xymatrix{
& \lefri{q_{k},p_{k}}  \ar[rr]^{\tilde{F}_{L_{d}}} & & \lefri{q_{k+1},p_{k+1}} & \\
& & & & \\
\lefri{q_{k-1},q_{k}} \ar[uur]^{\mathbb{F}^{+}L_{d}} \ar[rr]_{F_{L_{d}}} & & \lefri{q_{k},q_{k+1}} \ar[rr]_{F_{L_{d}}} \ar[uur]^{\mathbb{F}^{+}L_{d}} \ar[uul]_{\mathbb{F}^{-}L_{d}}& &\lefri{q_{k+1},q_{k+2}} \ar[uul]_{\mathbb{F}^{-}L_{d}}
}
\end{align*}
We now introduce the \textit{exact discrete Lagrangian} \(L^{E}_{d}\),%
\begin{align*}
L^{E}_{d} \lefri{q_{0},q_{1},h} = \ext_{\truqargs{q_{0}}{q_{1}}}\int_{0}^{h}L\lefri{q,\dot{q}}\dt.
\end{align*}%
An important theoretical result for the error analysis of variational integrators is that the discrete Hamiltonian and Lagrangian flow maps associated with the exact discrete Lagrangian produces an exact sampling of the true flow, as was shown in \citet{MaWe2001}. Using this result, \citet{MaWe2001} shows that there is a fundamental relationship between how well a discrete Lagrangian \(L_{d}\) approximates the exact discrete Lagrangian \(L_{d}^{E}\) and how well the corresponding discrete Hamiltonian flow maps, discrete Lagrangian flow maps and discrete Legendre transforms approximate each other. Since the exact discrete Lagrangian produces an exact sampling of the true flow, this in turn leads to the following theorem regarding the error analysis of variational integrators, also found in \citet{MaWe2001}:%
\begin{theorem} \emph{(Variational Error Analysis)} \label{MarsConv}
Given a regular Lagrangian \(L\) and corresponding Hamiltonian \(H\), the following are equivalent for a discrete Lagrangian \(L_{d}\):
\begin{enumerate}
\item the discrete Hamiltonian flow map for \(L_{d}\) has error \(\mathcal{O}\lefri{h^{p+1}}\),
\item the discrete Legendre transforms of \(L_{d}\) have error \(\mathcal{O}\lefri{h^{p+1}}\),
\item \(L_{d}\) approximates the exact discrete Lagrangian with error \(\mathcal{O}\lefri{h^{p+1}}\).
\end{enumerate}%
\end{theorem}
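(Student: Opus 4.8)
The plan is to prove the equivalence by establishing a chain of implications, with the key being a quantitative "adjoint"-type lemma relating the discrete Legendre transforms to the discrete Lagrangian itself. The pivotal fact, due to Marsden and West, is that the exact discrete Lagrangian $L_d^E$ generates the true Hamiltonian flow exactly: its discrete Legendre transforms $\mathbb{F}^\pm L_d^E$ push points forward along the exact flow. So the entire argument reduces to comparing $L_d$ against $L_d^E$ and propagating that comparison through the (differentiated) Legendre transforms and the flow map. I would prove $(3)\Rightarrow(2)\Rightarrow(1)$ and then close the loop with $(1)\Rightarrow(3)$.

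\emph{Step 1: $(3)\Rightarrow(2)$.} Suppose $L_d(q_0,q_1,h) = L_d^E(q_0,q_1,h) + \mathcal{O}(h^{p+1})$. The subtle point here is that a pointwise $\mathcal{O}(h^{p+1})$ bound on a function does not immediately control its derivatives. The standard resolution (following \citet{MaWe2001}) is to assume the error estimate holds in a $C^k$ sense on compact sets, or equivalently to observe that in the Galerkin setting the error is given by a smooth expression in $(q_0,q_1,h)$ whose Taylor expansion in $h$ agrees with that of $L_d^E$ through order $p$; one then differentiates the expansion term-by-term in $q_0$ and $q_1$. Since $\mathbb{F}^+L_d = D_2 L_d$ and $\mathbb{F}^-L_d = -D_1 L_d$, differentiating the error estimate in the slot arguments yields $\mathbb{F}^\pm L_d = \mathbb{F}^\pm L_d^E + \mathcal{O}(h^{p+1})$. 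Because $\mathbb{F}^\pm L_d^E$ are exactly the exact Legendre transforms composed with the true flow, this is precisely statement (2). (A small bookkeeping remark: one must be careful that boundary derivative terms arising from differentiating the extremizer-defined $L_d^E$ vanish, which they do by the first-order optimality conditions defining the extremal curve.)

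\emph{Step 2: $(2)\Rightarrow(1)$.} Write the discrete Hamiltonian flow map as $\tilde{F}_{L_d} = \mathbb{F}^+L_d \circ (\mathbb{F}^-L_d)^{-1}$ and compare it with $\tilde{F}_{L_d^E}$, which is the exact flow $\phi_h$. Using (2) we have $\mathbb{F}^-L_d = \mathbb{F}^-L_d^E + \mathcal{O}(h^{p+1})$; by the regularity (nondegeneracy) of $L$ and hence invertibility of the Legendre transforms with uniformly bounded inverse on compact sets, inverting preserves the order of the perturbation, so $(\mathbb{F}^-L_d)^{-1} = (\mathbb{F}^-L_d^E)^{-1} + \mathcal{O}(h^{p+1})$. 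Composing with $\mathbb{F}^+L_d = \mathbb{F}^+L_d^E + \mathcal{O}(h^{p+1})$ and using Lipschitz continuity of $\mathbb{F}^+L_d^E$, the two sources of error add, giving $\tilde{F}_{L_d} = \phi_h + \mathcal{O}(h^{p+1})$, which is (1). The only thing to watch is uniformity of the Lipschitz and inverse-function bounds as $h\to 0$; this follows from the regularity hypothesis on $L$ together with the fact that all maps in question depend smoothly on $h$ and reduce at $h=0$ to the identity on the relevant fiber.

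\emph{Step 3: $(1)\Rightarrow(3)$, closing the loop.} This is the direction I expect to be the \textbf{main obstacle}, since it requires recovering information about $L_d$ (a function) from information about its flow (which only sees derivatives of $L_d$). The idea is that the discrete action is, up to a constant of integration, determined by the discrete Legendre transforms: knowing $D_1 L_d$ and $D_2 L_d$ determines $L_d$ up to an additive constant, and matching the value at a single configuration (e.g.\ the degenerate diagonal $q_0=q_1$, where both $L_d$ and $L_d^E$ can be normalized to agree) pins down the constant. So one shows $(1)\Rightarrow(2)$ by running Step 2 in reverse — the regularity of $L$ lets one recover the Legendre transform errors from the flow map error — and then integrates the relation $L_d(q_0,q_1,h) - L_d^E(q_0,q_1,h) = \int_{\gamma}\big(D_1(L_d - L_d^E)\,dq_0 + D_2(L_d - L_d^E)\,dq_1\big)$ along a path $\gamma$ from the diagonal to $(q_0,q_1)$ of bounded length. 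The $\mathcal{O}(h^{p+1})$ bound on the integrand then yields the same bound on $L_d - L_d^E$, which is (3). The delicate step throughout is ensuring that differentiation and inversion do not degrade the power of $h$, which rests entirely on the uniform regularity estimates for the regular Lagrangian $L$ and on interpreting all the $\mathcal{O}(h^{p+1})$ statements in an appropriate $C^k$-on-compacta sense — this is exactly the hypothesis packaged into the theorem statement, and a complete proof would make that convention explicit before carrying out Steps 1--3.
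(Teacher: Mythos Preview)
Your proposal is correct and follows essentially the same route as the paper. The paper does not prove Theorem~\ref{MarsConv} directly (it is quoted from \citet{MaWe2001}), but in the appendix it proves the geometric-convergence analogue (Theorem~\ref{MarsConvExt}) by exactly your chain: differentiate the error in $(q_0,q_1)$ for $(3)\Rightarrow(2)$; use $\tilde F_{L_d}=\mathbb{F}^{+}L_d\circ(\mathbb{F}^{-}L_d)^{-1}$ together with an implicit-function-theorem lemma on composition and inversion for $(2)\Leftrightarrow(1)$; and integrate the derivative bound for $(2)\Rightarrow(3)$.

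Two small points where the paper is more concrete than your sketch. For $(1)\Rightarrow(2)$ the paper does not ``run Step~2 in reverse'' abstractly but uses the explicit identities $(\mathbb{F}^{-}L_d)^{-1}(q_0,p_0)=(q_0,\pi_Q\circ\tilde F_{L_d}(q_0,p_0))$ and $\mathbb{F}^{+}L_d=\tilde F_{L_d}\circ\mathbb{F}^{-}L_d$, which make the recovery of the Legendre-transform errors from the flow-map error immediate. For $(2)\Rightarrow(3)$, rather than normalizing at the diagonal, the paper simply observes that bounded $D_1 e_v$, $D_2 e_v$ force $e_v=(L_d-L_d^E)/h^{p+1}$ to be bounded up to an additive function of $h$ alone, and then invokes the fact that discrete Lagrangians differing by a function of $h$ are \emph{equivalent} (same Legendre transforms, same flow). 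Your diagonal normalization is a concrete instance of fixing that additive freedom, so the two treatments amount to the same thing.
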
%
\noindent We will make extensive use of this theorem later when we analyze the convergence of spectral variational integrators.

In addition, in \citet{MaWe2001}, it is shown that integrators constructed in this way, which are referred to as \textit{variational integrators}, have significant geometric structure. Most importantly, variational integrators always conserve the canonical symplectic form, and a discrete Noether's Theorem guarantees that a discrete momentum map is conserved for any continuous symmetry of the discrete Lagrangian. The preservation of these discrete geometric structures underlie the excellent long term behavior of variational integrators.

\section{Construction}

\subsection{Generalized Galerkin Variational Integrators}

\begin{figure}[htbp]
\[
\begin{xy} 0;<45mm,0cm>:<0cm,10mm>::
(-0.1,4.6)*\txt{$Q$},
(2.65,-0.3)*\txt{$t$},
(-0.1,-0.3)*\txt{$0$},
(0.5,-0.3)*\txt{$d_1 h$},
(1,-0.3)*\txt{$d_2 h$},
(1.5,-0.3)*\txt{$d_{i-2} h$},
(2,-0.3)*\txt{$d_{i-1} h$},
(2.5,-0.3)*\txt{$h$},
(0.25,-0.3)*\txt{\(c_{1}h\)},
(0.75,-0.3)*\txt{\(c_{2}h\)},
(1.75,-0.3)*\txt{\(c_{j-1}h\)},
(2.25,-0.3)*\txt{\(c_{j}h\)},
(0,2)*@{*};
(1,1)*@{*}
**\crv{(0.5,3)}
?(.5)*@{*} * !LD!/^-5pt/{q_k^1},
?(0.25)*\txt{\(\boldmath{\times}\)};
?(0.75)*\txt{\(\boldmath{\times}\)};
(1,1);(1.5,2) **\crv{~*=<4pt>{.} (1.2,0)},
(1.5,2)*@{*};(2.5,4)*@{*}
**\crv{(2,4.3)}
?(.5)*@{*} *!LD!/^-5pt/{q_k^{i-1}},
?(0.25)*\txt{\(\boldmath{\times}\)};
?(0.75)*\txt{\(\boldmath{\times}\)};
(0.1,1.7)* !LD!{q_k^0},
(1,1.3)* !LD!{q_k^2},
(1.45,2.2)* !LD!{q_k^{i-2}},
(2.5,4.2)* !LD!{q_k^i},
\ar@{-} (-0.1,0);(1,0),
\ar@{.} (1,0);(1.5,0),
\ar (1.5,0);(2.6,0),
\ar (0,-0.3);(0,4.5),
\ar@{-} (0.5,-0.1);(0.5,0.1),
\ar@{-} (1.0,-0.1);(1.0,0.1),
\ar@{-} (1.5,-0.1);(1.5,0.1),
\ar@{-} (2.0,-0.1);(2,0.1),
\ar@{-} (2.5,-0.1);(2.5,0.1),
\ar@{-} (0.25,-0.1);(.25,0.1),
\ar@{-} (0.75,-0.1);(.75,0.1),
\ar@{-} (1.75,-0.1);(1.75,0.1),
\ar@{-} (2.25,-0.1);(2.25,0.1),
\end{xy}
\]
\caption{A visual schematic of the curve \(\galqn\lefri{t} \in \FdFSpace\). The points marked with (\(\times\)) represent the quadrature points, which may or may not be the same as interpolation points \(d_{i}h\). In this figure we have chosen to depict a curve constructed from interpolating basis functions, but this is not necessary in general.}
\label{GalCurveVis}
\end{figure}
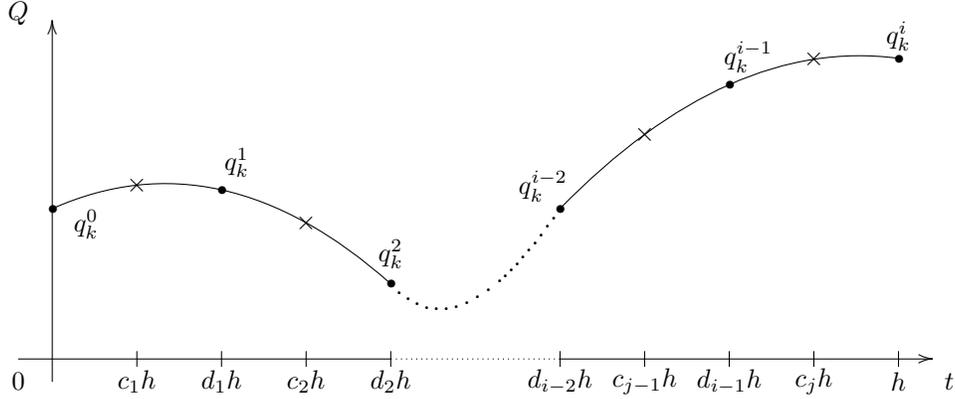

The construction of spectral variational integrators falls within the framework of generalized Galerkin variational integrators, discussed in \citet{Le04} and \citet{MaWe2001}. The motivating idea is that we try to replace the generally non-computable exact discrete Lagrangian \(\EDL{q_{k}}{q_{k+1}}\) with a computable discrete analogue, \(\GDL{q_{k}}{q_{k+1}}\). Galerkin variational integrators are constructed by using a finite-dimensional function space to discretize the action of a Lagrangian. Specifically, given a Lagrangian \(L:TQ \rightarrow \mathbb{R}\), to construct a Galerkin variational integrator:%
\begin{enumerate}
\item choose an \(n\)-dimensional function space \(\FdFSpace \subset \CQ\), with a finite set of basis functions \(\left\{\phi_{i}\lefri{t}\right\}_{i=1}^{n}\), 
\item choose a quadrature rule \(\mathcal{G}\lefri{\cdot}:F\lefri{\left[0,h\right],\mathbb{R}}\rightarrow\mathbb{R}\), so that \(\mathcal{G}\lefri{f} = h\sum_{j=1}^{m} b_{j}f\lefri{c_{j}h} \approx \int_{0}^{h} f\lefri{t} \dt\), where \(F\) is some appropriate function space,
\end{enumerate}%
and then construct the discrete action \(\mathbb{S}_{d}\lefri{\left\{q^{i}_{k}\right\}_{i=1}^{n}}:\prod_{i=1}^{n} Q_{i} \rightarrow \mathbb{R}\), (not to be confused with the discrete action sum \(\mathbb{S}\lefri{\left\{q_{k}\right\}_{k=1}^{\infty}}\)),%
\begin{align*}
\mathbb{S}_{d}\lefri{\left\{q_{k}^{i}\right\}_{i=1}^{n}} = \mathcal{G}\lefri{L\lefri{\sum_{i=1}^{n} q_{k}^{i}\phi_{i}\lefri{t}, \sum_{i=1}^{n}q_{k}^{i}\dot{\phi}_{i}\lefri{t}}} =  h\sum_{j=1}^{m} b_{j} L\lefri{\sum_{i=1}^{n} q_{k}^{i} \phi_{i}\lefri{c_{j}h}, \sum_{i=1}^{n} q_{k}^{i} \dot{\phi}_{i}\lefri{c_{j}h}},
\end{align*}%
where we use superscripts to index the weights associated with each basis function, as in \citet{MaWe2001}.

Once the discrete action has been constructed, a discrete Lagrangian can be induced by finding stationary points \(\tilde{q}_{n}\lefri{t} = \sum_{i=1}^{n}q_{k}^{i}\phi_{i}\lefri{t}\) of the action under the conditions \(\tilde{q}_{n}\lefri{0} = \sum_{i=1}^{n} q_{k}^{i} \phi_{i}\lefri{0} = q_{k}\) and \(\tilde{q}_{n}\lefri{h} = \sum_{i=1}^{n} q_{k}^{i} \phi_{i}\lefri{h} = q_{k+1}\) for some given \(q_{k}\) and \(q_{k+1}\),%
\begin{align*}
L_{d}\lefri{q_{k},q_{k+1},h} = \ext_{\substack{\tilde{q}_{n}\lefri{0} = q_{k}\\ \tilde{q}_{n}\lefri{h} = q_{k+1}}} \mathbb{S}_{d}\lefri{\left\{q_{k}^{i}\right\}_{i=1}^{n}} = \ext_{\substack{\tilde{q}_{n}\lefri{0} = q_{k}\\ \tilde{q}_{n}\lefri{h} = q_{k+1}}} h\sum_{j=1}^{m} b_{j}L\lefri{\tilde{q}_{n}\lefri{c_{j}h},\dot{\tilde{q}}_{n}\lefri{c_{j}h}}.
\end{align*}%
A discrete Lagrangian flow map that result from this type of discrete Lagrangian is referred to as a Galerkin variational integrator.

\subsection{Spectral Variational Integrators}%
There are two defining features of spectral variational integrators. The first is the choice of function space \(\FdFSpace\), and the second is that convergence is achieved not by shortening the time step \(h\), but by increasing the dimension \(n\) of the function space.

\subsubsection{Choice of Function Space}

Restricting our attention to the case where \(Q\) is a linear space, spectral variational integrators are constructed using the basis functions \(\phi_{i}\lefri{t} = l_{i}\lefri{t}\), where \(l_{i}\lefri{t}\) are Lagrange interpolating polynomials based on the points \(h_{i} = \frac{h}{2}\cos\lefri{\frac{i \pi}{n}} + \frac{h}{2}\) which are the Chebyshev points \(t_{i} = \cos\lefri{\frac{i \pi}{n}}\), rescaled and shifted from \(\left[-1,1\right]\) to \(\left[0,h\right]\). The resulting finite dimensional function space \(\FdFSpace\) is simply the polynomials of degree at most \(n\) on \(Q\). However, the choice of this particular set of basis functions offer several advantages over other possible bases for the polynomials:%
\begin{enumerate}
\item the restriction on variations \(\sum_{i=1}^{n} \delta q_{k}^{i} \phi_{i}\lefri{0} = \sum_{i=1}^{n} \delta q_{k}^{i} \phi_{i}\lefri{h} = 0\) reduces to \(\delta q_{k}^{1} = \delta q_{k}^{n} = 0\),
\item the condition \(\galqn\lefri{0} = q_{k}\) reduces to \(q_{k}^{1} = q_{k}\), 
\item the induced numerical methods have generally better stability properties because of the excellent approximation properties of the interpolation polynomials at the Chebyshev points.
\end{enumerate}%
Using this choice of basis functions, for any chosen quadrature rule, the discrete Lagrangian becomes,%
\begin{align*}
L_{d}\lefri{q_{k},q_{k+1},h} = \ext_{\galargsvf{q_{k}^{1}}{q_{k}}{q_{k}^{n}}{q_{k+1}}} h\sum_{j=1}^{m} b_{j} L\lefri{\galqn\lefri{c_{j}h},\dgalqn\lefri{c_{j}h}}.
\end{align*}%
Requiring the curve \(\galqn\lefri{t}\) to be a stationary point of the discretized action provides \(n-2\) internal stage conditions:%
\begin{align}
h\sum_{j=1}^{m} b_{j}\lefri{\dLdq \lefri{\galqn\lefri{c_{j}h},\dgalqn\lefri{c_{j}h}}\phi_{p}\lefri{c_{j}h} + \dLddq \lefri{\galqn \lefri{c_{j}h}, \dgalqn\lefri{c_{j}h}}\dot{\phi}_{p}\lefri{c_{j}h}} &= 0, & p = 2,...,n-1. \label{InterStage}
\end{align}
Combining these internal stage conditions with the discrete Euler-Lagrange equations,%
\begin{align*}
D_{1}L_{d}\lefri{q_{k-1},q_{k}} + D_{2}L_{d}\lefri{q_{k},q_{k+1}} = 0,
\end{align*}%
and the continuity condition \(q_{k}^{1} = q_{k}\) yields the following set of \(n\) nonlinear equations,%
\begin{align}
q_{k}^{1} &= q_{k}, & \nonumber \\
h\sum_{j=1}^{m} b_{j}\lefri{\dLdq \lefri{\galqn\lefri{c_{j}h},\dgalqn\lefri{c_{j}h}}\phi_{p}\lefri{c_{j}h} + \dLddq \lefri{\galqn \lefri{c_{j}h}, \dgalqn\lefri{c_{j}h}}\dot{\phi}_{p}\lefri{c_{j}h}} &= 0, & p = 2,...,n-1, \nonumber\\
h\sum_{j=1}^{m} b_{j}\lefri{\dLdq \lefri{\galqn\lefri{c_{j}h},\dgalqn\lefri{c_{j}h}}\phi_{1}\lefri{c_{j}h} + \dLddq \lefri{\galqn \lefri{c_{j}h}, \dgalqn\lefri{c_{j}h}}\dot{\phi}_{1}\lefri{c_{j}h}} &= p_{k-1}, & \label{momcon}
\end{align}%
which must be solved at each time step \(k\), and the momentum condition:%
\begin{align*}
h\sum_{j=1}^{m} b_{j}\lefri{\dLdq \lefri{\galqn\lefri{c_{j}h},\dgalqn\lefri{c_{j}h}}\phi_{n}\lefri{c_{j}h} + \dLddq \lefri{\galqn \lefri{c_{j}h}, \dgalqn\lefri{c_{j}h}}\dot{\phi_{n}}\lefri{c_{j}h}} &= p_{k},
\end{align*}%
which defines (\ref{momcon}) for the next time step. Evaluating \(q_{k+1} = \galqn\lefri{h}\) defines the next step for the discrete Lagrangian flow map:%
\begin{align*}
F_{L_{d}}\lefri{q_{k-1},q_{k}} = \lefri{q_{k},q_{k+1}},
\end{align*}%
and because of the choice of basis functions, this is simply \(q_{k+1} = q_{k}^{n}\).

\subsubsection{\(n\)-Refinement}

As is typical for spectral numerical methods (see, for example, \citet{Bo2001, Tr2000}), convergence for spectral variational integrators is achieved by increasing the dimension of the function space, \(\FdFSpace\). Furthermore, because the order of the discrete Lagrangian also depends on the order of the quadrature rule \(\mathcal{G}\), we must also refine the quadrature rule as we refine \(n\). Hence, for examining convergence, we must also consider the quadrature rule as a function of \(n\), \(\mathcal{G}_{n}\). Because of the dependence on \(n\) instead of \(h\), we will often examine the discrete Lagrangian \(L_{d}\) as a function of \(Q \times Q \times \mathbb{N}\), %
\begin{align*}
L_{d}\lefri{q_{k},q_{k+1},n} =  \ext_{\galargsvf{q_{k}^{1}}{q_{k}}{q_{k}^{n}}{q_{k+1}}} \mathcal{G}_{n}\lefri{L\lefri{\galqn\lefri{t},\dgalqn\lefri{t}}} = \ext_{\galargsvf{q_{k}^{1}}{q_{k}}{q_{k}^{n}}{q_{k+1}}} h\sum_{j=1}^{m_{n}} \bnj L\lefri{\galqn\lefri{\cnjh},\dgalqn\lefri{\cnjh}}, 
\end{align*}%
as opposed to the more conventional%
\begin{align*}
L_{d}\lefri{q_{k},q_{k+1},h} = \ext_{\galargsvf{q_{k}^{1}}{q_{k}}{q_{k}^{n}}{q_{k+1}}} \mathcal{G}\lefri{L\lefri{\galqn\lefri{t},\dgalqn\lefri{t}}} = \ext_{\galargsvf{q_{k}^{1}}{q_{k}}{q_{k}^{n}}{q_{k+1}}} h\sum_{j=1}^{m} b_{j} L\lefri{\galqn\lefri{c_{j}h},\dgalqn\lefri{c_{j}h}}. 
\end{align*}%
This type of refinement is the foundation for the exceptional convergence properties of spectral variational integrators.

\section{Existence, Uniqueness and Convergence}

In this section, we will discuss the major important properties of Galerkin variational integrators and spectral variational integrators. The first will be the existence of unique solutions to the internal stage equations (\ref{InterStage}) for certain types of Lagrangians. The second is the convergence of the one-step map that results from the Galerkin and spectral variational constructions, which will be shown to be optimal in a certain sense. The third and final is the convergence of continuous approximations to the Euler-Lagrange flow which can easily be constructed from Galerkin and spectral variational integrators, and the behavior of geometric invariants associated with the approximate continuous flow. We will show a number of different convergence results associated with these quantities, which demonstrate that Galerkin and spectral variational integrators can be used to compute continuous approximations to the exact solutions of the Euler-Lagrange equations which have excellent convergence and geometric behavior.

\subsection{Existence and Uniqueness}

In general, demonstrating that there exists a unique solution to the internal stage equations for a spectral variational integrator is difficult, and depends on the properties of the Lagrangian. However, assuming a Lagrangian of the form%
\begin{align*}
L\lefri{q,\dot{q}} = \frac{1}{2} \dot{q}^{T}M\dot{q} - V\lefri{q},
\end{align*}%
it is possible to show the existence and uniqueness of the solutions to the implicit equations for the one-step method under appropriate assumptions.
%
%
\begin{theorem}\label{ExisUniq}\emph{(Existence and Uniqueness of Solutions to the Internal Stage Equations)} Given a Lagrangian \(L:TQ \rightarrow \mathbb{R}\) of the form%
\begin{align*}
L\lefri{q,\dot{q}} = \frac{1}{2}\dot{q}^{T}M\dot{q} - V\lefri{q},
\end{align*}%
if \(\nabla V\) is Lipschitz continuous, \(\bj > 0\) for every \(j\) and \(\sum_{i=1}^{m} \bj = 1\), and \(M\) is symmetric positive-definite, then there exists an interval \(\left[0,h\right]\) where there exists a unique solution to the internal stage equations for a spectral variational integrator.
\end{theorem}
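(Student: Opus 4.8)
The plan is to recast the internal stage equations (\ref{InterStage}) as a fixed-point problem on a finite-dimensional Euclidean space and solve it with the Banach fixed-point theorem, using the special form of $L$ to split those equations into an invertible linear part coming from the kinetic energy and a Lipschitz perturbation coming from the potential, the latter being made small by shrinking $h$. Throughout, write $Q = \mathbb{R}^{d}$.

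First I would specialize (\ref{InterStage}) to $L(q,\dot q) = \tfrac{1}{2}\dot q^{T}M\dot q - V(q)$, so that $\partial L/\partial q = -\nabla V(q)$ and $\partial L/\partial\dot q = M\dot q$; after dividing by $h$, the equation indexed by $p$ ($p = 2,\dots,n-1$) reads
\[
\sum_{j=1}^{m}\bj\,M\,\dgalqn(c_{j}h)\,\dot\phi_{p}(c_{j}h) \;=\; \sum_{j=1}^{m}\bj\,\nabla V\!\left(\galqn(c_{j}h)\right)\phi_{p}(c_{j}h).
\]
To expose the dependence on $h$, rescale time to $[0,1]$ by $s = t/h$ and set $\gamma(s) := \galqn(hs) = \sum_{i}q_{k}^{i}\psi_{i}(s)$, where $\psi_{i}(s) := \phi_{i}(hs)$ are the $h$-independent Lagrange polynomials at the shifted Chebyshev nodes; since $\dot\phi_{p}(c_{j}h) = h^{-1}\psi_{p}'(c_{j})$ and $\dgalqn(c_{j}h) = h^{-1}\gamma'(c_{j})$, the stage equations become
\[
M\sum_{j=1}^{m}\bj\,\gamma'(c_{j})\,\psi_{p}'(c_{j}) \;=\; h^{2}\sum_{j=1}^{m}\bj\,\nabla V\!\left(\gamma(c_{j})\right)\psi_{p}(c_{j}), \qquad p = 2,\dots,n-1.
\]

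By the interpolation property of the $\psi_{i}$, the conditions $\galqn(0) = q_{k}$ and $\galqn(h) = q_{k+1}$ fix the two endpoint weights and leave the internal weights $\mathbf{z} := (q_{k}^{2},\dots,q_{k}^{n-1}) \in \mathbb{R}^{d(n-2)}$ free — exactly matched by the $n-2$ equations above. The left-hand side is affine in $\mathbf{z}$ with linear part $\tilde A\otimes M$, where $\tilde A := \big(\sum_{j}\bj\,\psi_{i}'(c_{j})\psi_{p}'(c_{j})\big)_{p,i=2}^{n-1}$ is the quadrature-weighted Gram matrix of the derivatives $\{\psi_{i}'\}$. Since $\bj > 0$ we have $\mathbf{z}^{T}\tilde A\,\mathbf{z} = \sum_{j}\bj\,|\gamma_{0}'(c_{j})|^{2}\ge 0$, where $\gamma_{0}$ is the polynomial built from the internal weights alone (so $\gamma_{0}(0) = \gamma_{0}(1) = 0$); equality would force $\gamma_{0}'$ to vanish at every quadrature node, and hence — provided the rule integrates these squared derivatives exactly, which, as is standard for spectral methods, holds once enough nodes are used — $\gamma_{0}\equiv 0$, i.e. $\mathbf{z} = 0$. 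Thus $\tilde A$ is symmetric positive-definite, and since $M$ is symmetric positive-definite, $\tilde A\otimes M$ is invertible. Moving the known endpoint terms to the right-hand side, the stage equations take the form
\[
\mathbf{z} = \mathbf{v}(q_{k},q_{k+1}) + h^{2}\,(\tilde A^{-1}\!\otimes M^{-1})\,\mathbf{g}(\mathbf{z}) =: \mathcal{T}(\mathbf{z}), \qquad \mathbf{g}(\mathbf{z})_{p} := \sum_{j}\bj\,\nabla V\!\left(\gamma(c_{j})\right)\psi_{p}(c_{j}).
\]

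Finally, since $\nabla V$ is globally Lipschitz, with constant $L_{V}$ say, and since for weight vectors $\mathbf{z},\mathbf{z}'$ with corresponding curves $\gamma,\gamma'$ one has $|\gamma(c_{j}) - \gamma'(c_{j})| \le C_{0}\,\|\mathbf{z} - \mathbf{z}'\|$ with $C_{0}$ depending only on the basis and the nodes, while $\sum_{j}\bj = 1$ prevents the weights from amplifying anything, the map $\mathbf{g}$ is globally Lipschitz with an $h$-independent constant $C\,L_{V}$; hence $\mathcal{T}\colon\mathbb{R}^{d(n-2)}\to\mathbb{R}^{d(n-2)}$ is Lipschitz with constant $h^{2}\,\|\tilde A^{-1}\|\,\|M^{-1}\|\,C\,L_{V}$. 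This is strictly less than $1$ for every $h$ in a sufficiently small interval $[0,h_{0}]$, on which $\mathcal{T}$ is a contraction of a complete metric space; the Banach fixed-point theorem then yields the unique solution $\mathbf{z}$ of (\ref{InterStage}), and a similar argument handles the coupled system arising in the one-step method, where $q_{k}^{n} = q_{k+1}$ is also unknown and tied to the incoming momentum. The step I expect to demand genuine care is the positive-definiteness — hence bounded invertibility — of the discrete Dirichlet form $\tilde A$, since this is where the hypotheses $\bj > 0$ and the accuracy of the quadrature rule are really used; everything downstream is a routine contraction estimate once the leading linear operator is controlled.
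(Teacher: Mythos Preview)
Your approach is essentially the paper's: split the stage equations into an invertible kinetic part and a Lipschitz potential part, and contract for small $h$ via Banach. Your rescaling to $[0,1]$ makes the $h$-dependence cleaner and yields an $O(h^{2})$ contraction constant, whereas the paper works directly on $[0,h]$ and, after bounding $\|A^{-1}\|_{\infty}\le\|A_{1}^{-1}\|_{\infty}$ (its Lemma on the $h$-scaling of $A$), obtains only $O(h)$; the difference is cosmetic. Your Gram-matrix positive-definiteness argument for $\tilde A$ is the same idea as the paper's Lemma~\ref{AInvert}, just restricted to the interior block.

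The one substantive difference is scope. You fix \emph{both} endpoints $q_{k},q_{k+1}$ and solve only the $(n-2)$ equations~(\ref{InterStage}) for the interior weights; the paper instead treats the full $n\times n$ system in which $q_{k}^{n}=q_{k+1}$ is itself unknown and the momentum condition~(\ref{momcon}) supplies the extra equation. Your last sentence waves at this extension, but your invertibility argument does not carry over verbatim: once the row index set is $p=1,\dots,n-1$ and the column index set is $i=2,\dots,n$, the linear part is no longer a symmetric Gram matrix, so positive-definiteness is unavailable as stated. The paper handles this by keeping the full $n\times n$ matrix (encoding $q_{k}^{1}=q_{k}$ in the first row) and proving invertibility via a direct null-space computation: if $Aq^{i}=0$ then $\dgalqn$ is $L^{2}$-orthogonal to every $\dot\phi_{p}$, hence identically zero, hence $\galqn\equiv 0$. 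That is the piece you would still need to supply to cover the full one-step map.
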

%
%
\begin{proof} We will consider only the case where \(q\lefri{t}\in \mathbb{R}\), but the argument generalizes easily to higher dimensions. To begin, we note that for a Lagrangian of the form,%
\begin{align*}
L\lefri{q,\dot{q}} = \frac{1}{2} \dot{q}^{T}M\dot{q} - V\lefri{q}
\end{align*}%
the equations%
\begin{align*}
q_{k}^{1} =& q_{k}, & \\
h\sum_{j=1}^{m} b_{j}\lefri{\dLdq \lefri{\galqn\lefri{c_{j}h},\dgalqn\lefri{c_{j}h}}\phi_{p}\lefri{c_{j}h} + \dLddq \lefri{\galqn \lefri{c_{j}h}, \dgalqn\lefri{c_{j}h}}\dot{\phi}_{p}\lefri{c_{j}h}} =& 0, & p = 2,...,n-1, \\
h\sum_{j=1}^{m} b_{j}\lefri{\dLdq \lefri{\galqn\lefri{c_{j}h},\dgalqn\lefri{c_{j}h}}\phi_{1}\lefri{c_{j}h} + \dLddq \lefri{\galqn \lefri{c_{j}h}, \dgalqn\lefri{c_{j}h}}\dot{\phi}_{1}\lefri{c_{j}h}} =& p_{k-1}, & 
\end{align*}%
take the form%
\begin{align}
Aq^{i} - f\lefri{q^{i}} = 0, \label{InternalDEL}
\end{align}%
where \(q^{i}\) is the vector of internal weights, \(q^{i} = \lefri{q_{k}^{1},q_{k}^{2},...,q_{k}^{n}}^{T}\), \(A\) is a matrix with entries defined by 
\begin{align}
A_{1,1} =& 1, & & \label{AMatrix1}\\
A_{1,i} =& 0, & i = 2,...,n, & \label{AMatrix2}\\
A_{p,i} =& h\sum_{j=1}^{m} b_{j}M\dot{\phi}_{i}\lefri{c_{j}h}\dot{\phi}_{p}\lefri{c_{j}h}, & p = 2,...,n; & i = 1,...,n, \label{AMatrix3}
\end{align}%
and \(f\) is a vector valued function defined by%
\begin{align*}
f\lefri{q^{i}} = \left(\begin{array}{c}
q_{k} \\
h\sum_{j=1}^{m} b_{j} \nabla V\lefri{\sum_{i=1}^{n}q_{k}^{i} \phi_{i}\lefri{c_{j}h}}\phi_{2}\\
\vdots \\
h\sum_{j=1}^{m} b_{j} \nabla V\lefri{\sum_{i=1}^{n}q_{k}^{i} \phi_{i}\lefri{c_{j}h}}\phi_{n-1}\\
p_{k-1}
\end{array}\right).
\end{align*}%
It is important to note that the entries of \(A\) depend on \(h\). For now we will assume \(A\) is invertible, and that \(\left\|A^{-1}\right\| < \left\|A_{1}^{-1}\right\|\), for where \(A_{1}\) is the matrix \(A\) generated on the interval \(\left[0,1\right]\). Of course, the properties of \(A\) depend on the choice of basis functions \(\left\{\phi_{i}\right\}_{i=1}^{n}\), but we will establish these properties for the polynomial basis later. Defining the map:%
\begin{align*}
\Phi\lefri{q^{i}} = A^{-1}f\lefri{q^{i}},
\end{align*}%
it is easily seen that (\ref{InternalDEL}) is satisfied if and only if \(q^{i} = \Phi\lefri{q^{i}}\), that is, \(q^{i}\) is a fixed point of \(\Phi\lefri{\cdot}\). If we establish that \(\Phi\lefri{\cdot}\) is a contraction mapping,%
\begin{align*}
\left\|\Phi\lefri{w^{i}} - \Phi\lefri{v^{i}}\right\|_{\infty} \leq k \left\|w^{i} - v^{i}\right\|_{\infty},
\end{align*}%
for some \(k < 1\), we can establish the existence of a unique fixed point, and thus show that the steps of the one step method are well-defined. Here, and throughout this section, we use \(\MNorm{\cdot}{p}\) to denote the vector or matrix \(p\)-norm, as appropriate.

To show that \(\Phi\lefri{\cdot}\) is a contraction mapping, we consider arbitrary \(w^{i}\) and \(v^{i}\):%
\begin{align*}
\left\|\Phi\lefri{w^{i}} - \Phi\lefri{v^{i}}\right\|_{\infty} &= \left\|A^{-1}f\lefri{w^{i}} - A^{-1}f\lefri{v^{i}}\right\|_{\infty} \\
&= \left\|A^{-1}\lefri{f\lefri{w^{i}} - f\lefri{v^{i}}}\right\|_{\infty} \\
&\leq \left\|A^{-1}\right\|_{\infty} \left\|f\lefri{w^{i}} - f\lefri{v^{i}}\right\|_{\infty}.
\end{align*}%
Considering \(\left\|f\lefri{w^{i}} - f\lefri{v^{i}}\right\|_{\infty}\), we see that%
\begin{align}
\left\|f\lefri{w^{i}} - f\lefri{v^{i}}\right\|_{\infty} &= \left| \sum_{j=1}^{m} b_{j} \left[ \nabla V\lefri{\sum_{i=1}^{n}w_{k}^{i} \phi_{i}\lefri{c_{j}h}} -\nabla V\lefri{\sum_{i=1}^{n}v_{k}^{i} \phi_{i}\lefri{c_{j}h}}\right]\phi_{p^{*}}\lefri{c_{j}h} \right|, \label{maxele}
\end{align}%
for some appropriate index \(p^{*}\). Note that the first and last terms of \(\MNorm{f\lefri{w^{i}} - f\lefri{v^{i}}}{\infty}\) will vanish, so the maximum element must take the form of (\ref{maxele}). Let \(\phi^{i}\lefri{t} = \lefri{\phi_{1}\lefri{t},\phi_{2}\lefri{t},...,\phi_{n}\lefri{t}}\). Let \(\LipC\) be the Lipschitz constant for \(\nabla V\lefri{q}\). Now
\begin{align*}
\MNorm{f\lefri{w^{i}} - f\lefri{v^{i}}}{\infty} &=\left| h\sum_{j=1}^{m} b_{j} \left[\nabla V\lefri{\sum_{i=1}^{n} w_{k}^{i} \phi_{i}\lefri{c_{j}h}} - \nabla V\lefri{\sum_{i=1}^{n} v_{k}^{i} \phi_{i}\lefri{c_{j}h}}\right]\phi_{p^{*}}\lefri{c_{j}h} \right| \\
&\leq h\sum_{j=1}^{m}\left|b_{j}\right| \left|\left[\nabla V\lefri{\sum_{i=1}^{n} w_{k}^{i} \phi_{i}\lefri{c_{j}h}} - \nabla V\lefri{\sum_{i=1}^{n} v_{k}^{i} \phi_{i}\lefri{c_{j}h}}\right]\right|\left|\phi_{p^{*}}\lefri{c_{j}h} \right| \\
&\leq h\sum_{j=1}^{m} b_{j} \LipC\left|\sum_{i=1}^{n} w_{k}^{i} \phi_{i}\lefri{c_{j}h} - \sum_{i=1}^{n} v_{k}^{i} \phi_{i}\lefri{c_{j}h} \right| \left|\phi_{p^{*}}\lefri{c_{j}h} \right| \\
&= h\sum_{j=1}^{m} b_{j} \LipC\left|\sum_{i=1}^{n} \lefri{w_{k}^{i} - v_{k}^{i}} \phi_{i}\lefri{c_{j}h}\right|\left|\phi_{p^{*}}\lefri{c_{j}h} \right| \\
&\leq h \sum_{j=1}^{m} b_{j} \LipC \left\|w^{i} - v^{i}\right\|_{\infty} \left\| \phi^{i}\lefri{c_{j}h} \right\|_{1} \left|\phi_{p^{*}}\lefri{c_{j}h}\right| \\
&\leq h \sum_{j=1}^{m} b_{j} \LipC \max_{j}\lefri{\left\|\phi^{i}\lefri{c_{j}h}\right\|_{1}\left|\phi_{p^{*}}\lefri{c_{j}h}\right|} \left\|w^{i} - v^{i}\right\|_{\infty}\\
&= h \LipC \max_{j}\lefri{\left\|\phi^{i}\lefri{c_{j}h}\right\|_{1}\left|\phi_{p^{*}}\lefri{c_{j}h}\right|} \left\|w^{i} - v^{i}\right\|_{\infty}.
\end{align*}%
Hence, we derive the inequality%
\begin{align*}
\left\| \Phi\lefri{w^{i}} - \Phi\lefri{v^{i}} \right\|_{\infty} &\leq h \left\|A^{-1}\right\|_{\infty} \LipC \max_{j}\lefri{\left\|\phi^{i}\lefri{c_{j}h}\right\|_{1}\left|\phi_{p^{*}}\lefri{c_{j}h}\right|} \left\|w^{i} - v^{i}\right\|_{\infty},
\end{align*}%
and since by assumption \(\MNorm{A^{-1}}{\infty} \leq \MNorm{A_{1}^{-1}}{\infty}\),
\begin{align*}
\MNorm{\Phi\lefri{w^{i}} - \Phi\lefri{v^{i}}}{\infty} &\leq h \left\|A_{1}^{-1}\right\|_{\infty} \LipC \max_{j}\lefri{\left\|\phi^{i}\lefri{c_{j}h}\right\|_{1}\left|\phi_{p^{*}}\lefri{c_{j}h}\right|} \left\|w^{i} - v^{i}\right\|_{\infty}.
\end{align*}%
Thus if:%
\begin{align*}
h < \lefri{\left\|A_{1}^{-1}\right\|_{\infty} \LipC \max_{j}\lefri{\left\|\phi^{i}\lefri{c_{j}h}\right\|_{1}\left|\phi_{p^{*}}\lefri{c_{j}h}\right|}}^{-1},
\end{align*}
then%
\begin{align*}
\left\|\Phi\lefri{w^{i}} - \Phi\lefri{v^{i}}\right\|_{\infty} \leq k \left\|w^{i} - v^{i} \right\|_{\infty},
\end{align*}%
where \(k < 1\), which establishes that \(\Phi\lefri{\cdot}\) is a contraction mapping, and establishes the existence of a unique fixed point, and thus the existence of unique steps of the one step method.
\end{proof}

A critical assumption made during the proof of existence and uniqueness is that the matrix \(A\) is nonsingular. This property depends on the choice of basis functions \(\phi_{i}\). However, using a polynomial basis, like Lagrange interpolation polynomials, it can be shown that \(A\) is invertible.%
%
%
\begin{lemma}\emph{(\(A\) is invertible)}\label{AInvert} If \(\left\{\phi_{i}\right\}_{i=1}^{n}\) is a polynomial basis of \(P_{n}\), the space of polynomials of degree at most \(n\), M is symmetric positive-definite, and the quadrature rule is order at least \(2n + 1\), then \(A\) defined by \eqref{AMatrix1} -- \eqref{AMatrix3} is invertible.
\end{lemma}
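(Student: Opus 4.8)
The plan is to use the exactness of the quadrature rule to recognize the nontrivial block of \(A\) as a Gram matrix of polynomial derivatives, and then to reduce the invertibility of \(A\) to the linear independence of the basis together with the endpoint normalization \(\phi_{1}\lefri{0}=1\), \(\phi_{i}\lefri{0}=0\) for \(i\ge 2\) that is already built into the first row of \(A\).

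First, I would note that since each \(\phi_{i}\) is a polynomial of degree at most \(n\), the product \(\dot{\phi}_{i}\lefri{t}\dot{\phi}_{p}\lefri{t}\) is a polynomial of degree at most \(2n-2\), and is therefore integrated exactly by a quadrature rule of order at least \(2n+1\); in particular the positivity of the weights plays no role in this lemma. Hence for \(p=2,\dots,n\) and \(i=1,\dots,n\) one has \(A_{p,i} = h\sum_{j=1}^{m} b_{j} M\dot{\phi}_{i}\lefri{c_{j}h}\dot{\phi}_{p}\lefri{c_{j}h} = M\int_{0}^{h}\dot{\phi}_{i}\lefri{t}\dot{\phi}_{p}\lefri{t}\dt\). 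Since the first row of \(A\) is \(\lefri{1,0,\dots,0}\), expanding \(\det A\) along that row gives \(\det A = \det\tilde{A}\), where \(\tilde{A} = \lefri{A_{p,i}}_{p,i=2}^{n} = M\tilde{G}\) and \(\tilde{G}\) is the \(\lefri{n-1}\times\lefri{n-1}\) Gram matrix \(\tilde{G}_{p,i} = \int_{0}^{h}\dot{\phi}_{p}\lefri{t}\dot{\phi}_{i}\lefri{t}\dt\) of the functions \(\dot{\phi}_{2},\dots,\dot{\phi}_{n}\) in \(L^{2}\lefri{\left[0,h\right]}\). It therefore suffices to show that \(\tilde{G}\) is invertible.

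Second, I would show \(\tilde{G}\) is positive definite. As a Gram matrix it is symmetric positive semidefinite, and \(c^{T}\tilde{G}c = \int_{0}^{h}\lefri{\sum_{i=2}^{n} c_{i}\dot{\phi}_{i}\lefri{t}}^{2}\dt = 0\) forces \(\sum_{i=2}^{n} c_{i}\dot{\phi}_{i}\equiv 0\), so that \(p\lefri{t} := \sum_{i=2}^{n} c_{i}\phi_{i}\lefri{t}\) has zero derivative on \(\left[0,h\right]\) and is thus constant. The reduction of the continuity condition to \(q_{k}^{1}=q_{k}\) — which is precisely what the first row of \(A\) records — reflects the normalization \(\phi_{i}\lefri{0}=0\) for \(i\ge 2\); hence \(p\lefri{0}=0\), so \(p\equiv 0\), and linear independence of the basis \(\left\{\phi_{i}\right\}_{i=1}^{n}\) yields \(c=0\). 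Thus \(\tilde{G}\) is invertible, and since \(M\) is symmetric positive-definite (a positive scalar in the case \(q\lefri{t}\in\mathbb{R}\) treated here), \(\tilde{A}=M\tilde{G}\) is invertible and \(\det A\ne 0\).

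I do not anticipate a genuine obstacle: the only points requiring care are (i) matching the quadrature order to the degree \(2n-2\) of \(\dot{\phi}_{i}\dot{\phi}_{p}\), so that the relevant entries of \(A\) are exact integrals, and (ii) invoking the endpoint normalization \(\phi_{i}\lefri{0}=0\), \(i\ge 2\), implicit in the definition \eqref{AMatrix1}--\eqref{AMatrix3} of \(A\), in order to exclude a nonzero constant polynomial from \(\mathrm{span}\left\{\phi_{2},\dots,\phi_{n}\right\}\). For \(q\lefri{t}\in\mathbb{R}^{d}\) the same argument applies verbatim to the block matrix with \(d\times d\) blocks \(M\int_{0}^{h}\dot{\phi}_{i}\dot{\phi}_{p}\dt\), whose determinant is \(\lefri{\det\tilde{G}}^{d}\lefri{\det M}^{n-1}\ne 0\).
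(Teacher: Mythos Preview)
Your proof is correct and follows essentially the same approach as the paper's: both use the exactness of the quadrature rule to rewrite the nontrivial entries of \(A\) as \(L^{2}\) inner products of the \(\dot{\phi}_{i}\), and then invoke the endpoint normalization \(\phi_{i}\lefri{0}=\delta_{i1}\) (used implicitly by the paper, made explicit by you) together with linear independence to conclude. The paper argues directly that \(Aq^{i}=0\) forces \(\dgalqn\equiv 0\) and \(\galqn\lefri{0}=0\), whereas you expand along the first row and identify the remaining minor as a positive-definite Gram matrix; these are equivalent phrasings of the same idea.
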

%
%
\begin{proof} We begin by considering the equation:%
\begin{align*}
Aq^{i} = 0.
\end{align*}%
Let \(\galqn\lefri{t} = \sum_{i=1}^{n} q_{k}^{i} \phi_{i}\lefri{t}\). Considering the definition of \(A\), \(Aq^{i} = 0\) holds if and only if the following equations hold:%
\begin{align}
\galqn\lefri{0} &= 0, & \nonumber \\
h\sum_{j=1}^{m}b_{j} M\dgalqn\lefri{c_{j}h}\dot{\phi}_{p}\lefri{c_{j}h} &= 0, & p = 1,...,(n-1). \label{innerproductcond}
\end{align}%
It can easily be seen that \(\left\{\dot{\phi}_{i}\right\}_{i=1}^{n-1}\) is a basis of \(P_{n-1}\). Using the assumption that the quadrature rule is of order at least \(2n-1\) and that \(M\) is symmetric positive-definite, we can see that (\ref{innerproductcond}) implies:%
\begin{align*}
\int_{0}^{h} M\dgalqn\lefri{t}\dot{\phi}_{p}\lefri{t} \dt &= 0, & p = 1,...,(n-1),
\end{align*}%
but,%
\begin{align*}
\int_{0}^{h} M\dgalqn\lefri{t} \dot{\phi}_{i}\lefri{t} \dt = 0
\end{align*}%
implies
\begin{align*}
\left<\dgalqn, \dot{\phi}_{p}\right> = 0,
\end{align*}%
where \(\left<\cdot,\cdot\right>\) is the standard \(L^{2}\) inner product on \(\left[0,h\right]\). Since \(\left\{\dot{\phi}_{i}\right\}_{i=1}^{n-1}\) forms a basis for \(P_{n-1}\), \(\dgalqn \in P_{n-1}\), and \(\left<\cdot,\cdot\right>\) is non-degenerate, this implies that \(\dgalqn\lefri{t} = 0\). Thus,%
\begin{align*}
\galqn\lefri{0} = 0 \\
\dgalqn\lefri{t} = 0
\end{align*}%
which implies that \(\galqn\lefri{t} = 0\) and hence \(q^{i} = 0\). Thus, \(Aq^{i} = 0\) then \(q^{i} = 0\), from which it follows that \(A\) is non-singular.
\end{proof}

Another subtle difficulty is that the matrix \(A\) is a function of \(h\). Since we assumed that \(\left\|A^{-1}\right\|_{\infty}\) is bounded to prove Theorem \ref{ExisUniq}, we must show that for any choice of \(h\), the quantity \(\left\|A^{-1}\right\|_{\infty}\) is bounded. We will do this by establishing \(\left\|A^{-1}\right\|_{\infty} \leq \MNorm{A_{1}^{-1}}{\infty}\), where \(A_{1}\) is \(A\) defined with \(h=1\). By Lemma \ref{AInvert}, we know that \(\MNorm{A_{1}^{-1}}{\infty} < \infty\), which establishes the upper bound for \(\MNorm{A^{-1}}{\infty}\). This argument is easily generalized for a higher upper bound on \(h\).%
%
%
\begin{lemma}\emph{(\(\left\|A^{-1}\right\|_{\infty} \leq \left\|A_{1}^{-1}\right\|_{\infty})\)} For the matrix \(A\) defined by \eqref{AMatrix1} -- \eqref{AMatrix3}, if \(h < 1\), \(\left\|A^{-1}\right\|_{\infty} < \left\|A_{1}^{-1}\right\|_{\infty}\) where \(A_{1}\) is \(A\) defined on the interval \(\left[0,1\right]\). 
\end{lemma}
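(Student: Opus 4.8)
The plan is to analyze the structure of $A$ as a function of $h$ and extract the $h$-dependence explicitly. Looking at \eqref{AMatrix1}--\eqref{AMatrix3}, the first row of $A$ is $(1, 0, \dots, 0)$, independent of $h$, while every entry $A_{p,i}$ for $p = 2,\dots,n$ carries a single overall factor of $h$ coming from the quadrature weight prefactor, since the basis functions $\phi_i$ themselves do not depend on $h$ (they are Lagrange interpolating polynomials on the fixed set of rescaled Chebyshev nodes in $[0,h]$ — wait, these \emph{do} scale with $h$). The first thing I would do is pin down exactly how $\phi_i(c_j h)$ and $\dot\phi_i(c_j h)$ scale with $h$: if $\ell_i$ denotes the Lagrange polynomial on the Chebyshev nodes $t_i \in [-1,1]$ and $\phi_i(t) = \ell_i(2t/h - 1)$, then $\phi_i(c_j h) = \ell_i(2c_j - 1)$ is \emph{independent} of $h$, while $\dot\phi_i(c_j h) = \frac{2}{h}\ell_i'(2c_j - 1)$ scales as $1/h$. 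Therefore $A_{p,i} = h \sum_j b_j M \dot\phi_i(c_j h)\dot\phi_p(c_j h) = \frac{1}{h}\sum_j b_j M \cdot \frac{4}{h}\ell_i'(2c_j-1)\ell_p'(2c_j-1)\cdot h$, and the net power of $h$ in row $p \ge 2$ is $h^{-1}$.

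With that scaling in hand, the key step is to write $A = D_h A_1 D_h'$ or, more simply, $A = \Lambda_h \tilde A$ where $\Lambda_h$ is the diagonal matrix $\mathrm{diag}(1, h^{-1}, \dots, h^{-1})$ and $\tilde A = A_1$ is precisely the matrix obtained when $h = 1$: row $1$ is unchanged and rows $2,\dots,n$ of $A$ equal $h^{-1}$ times the corresponding rows of $A_1$. Then $A^{-1} = A_1^{-1}\Lambda_h^{-1}$ where $\Lambda_h^{-1} = \mathrm{diag}(1, h, \dots, h)$. Taking the $\infty$-norm (max absolute row sum), I would observe that multiplying $A_1^{-1}$ on the right by $\mathrm{diag}(1, h, \dots, h)$ multiplies all but the first column of $A_1^{-1}$ by $h$; for $h < 1$ this can only decrease each row sum of absolute values (the first-column entry is unchanged, every other entry shrinks by factor $h < 1$), hence $\|A^{-1}\|_\infty = \|A_1^{-1}\Lambda_h^{-1}\|_\infty \le \|A_1^{-1}\|_\infty$. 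If one wants the strict inequality claimed, it follows as long as $A_1^{-1}$ has at least one nonzero entry outside the first column in its maximizing row, which holds because $A_1$ is invertible (so no row of $A_1^{-1}$ can be a multiple of $e_1^T$ unless $n = 1$); a clean way to state it is $\|A^{-1}\|_\infty \le \|A_1^{-1}\|_\infty$ with strictness generically, which is all Theorem \ref{ExisUniq} actually used.

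The main obstacle, and the place requiring care, is verifying the precise $h$-scaling of the basis functions and confirming that $A$ factors as claimed — in particular making sure there is exactly one net factor of $h$ (not $h^{-1}$, and not a mix across columns) in rows $2$ through $n$, and that the first row is genuinely $h$-independent. Once the factorization $A = \Lambda_h A_1$ is established, the norm comparison is an elementary estimate on row sums. I would also remark, as the excerpt does, that the same argument with $\Lambda_h^{-1} = \mathrm{diag}(1, h, \dots, h)$ shows more generally that for $h \le H$ one has $\|A^{-1}\|_\infty \le \max(1,H)\,\|A_1^{-1}\|_\infty$, giving the uniform bound needed to run the contraction-mapping argument on any bounded interval of step sizes.
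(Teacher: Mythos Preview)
Your approach is essentially the same as the paper's: both identify that the basis functions on $[0,h]$ are rescalings of those on $[0,1]$ so that $\dot\phi_i^h(c_j h)=\tfrac1h\dot\phi_i(c_j)$, factor $A=\operatorname{diag}(1,h^{-1},\dots,h^{-1})\,A_1$, invert, and bound $\|A^{-1}\|_\infty$ (the paper uses submultiplicativity, you argue directly on row sums --- same content). Your intermediate scaling line is garbled (the displayed expression doesn't parse, though your conclusion ``net power $h^{-1}$'' is correct), and your argument for \emph{strict} inequality is not quite right (a row of an invertible matrix's inverse can be a multiple of $e_1^T$; the correct observation is that this forces that row to be the first one) --- but the paper itself only proves $\le$ via submultiplicativity despite stating $<$, so the non-strict bound is all that is actually established or needed.
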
%
%
%
\begin{proof} We begin the proof by examining how \(A\) changes as a function of \(h\). First, let \(\left\{\phi_{i}\right\}_{i=1}^{n}\) be the basis for the interval \(\left[0,1\right]\). Then for the interval \(\left[0,h\right]\), the basis functions are%
\begin{align*}
\phi^{h}_{i} \lefri{t} = \phi_{i}\lefri{\frac{t}{h}}
\end{align*}%
and hence the derivatives are:%
\begin{align*}
\dot{\phi}^{h}_{i}\lefri{t} = \frac{1}{h} \dot{\phi}_{i}\lefri{\frac{t}{h}}.
\end{align*}%
Thus, if \(A_{1}\) is the matrix defined by \eqref{AMatrix1} -- \eqref{AMatrix3} on the interval \(\left[0,1\right]\), then for the interval \(\left[0,h\right]\),
\begin{align*}
A = \left(\begin{array}{cc} 1 & 0\\ 0 & \frac{1}{h}I_{\lefri{n-1}\times \lefri{n-1}}\end{array}\right) A_{1},
\end{align*}%
where \(I_{n\times n}\) is the \(n\times n\) identity matrix. This gives
\begin{align*}
A^{-1} = A^{-1}_{1} \left(\begin{array}{cc} 1 & 0 \\ 0 & h I_{\lefri{n-1} \times \lefri{n-1}}\end{array}\right)
\end{align*}%
which gives%
\begin{align*}
\left\|A^{-1}\right\|_{\infty} = \left\|A^{-1}_{1} \left(\begin{array}{cc} 1 & 0 \\ 0 & h I_{\lefri{n-1} \times \lefri{n-1}}\end{array}\right)\right\|_{\infty} \leq \left\|A^{-1}_{1}\right\|_{\infty} \left\|  \left(\begin{array}{cc} 1 & 0 \\ 0 & h I_{\lefri{n-1} \times \lefri{n-1}}\end{array}\right)\right\|_{\infty} = \left\|A^{-1}_{1}\right\|_{\infty},
\end{align*}%
which proves the statement.
\end{proof}

\subsection{Order Optimal and Geometric Convergence}\label{ConSection}%
To determine the rate of convergence for spectral variational integrators, we will utilize Theorem \ref{MarsConv} and a simple extension of Theorem \ref{MarsConv}:%
\begin{theorem} \emph{(Extension of Theorem \ref{MarsConv} to Geometric Convergence)} \label{MarsConvExt}%
Given a regular Lagrangian \(L\) and corresponding Hamiltonian \(H\), the following are equivalent for a discrete Lagrangian \(L_{d}\lefri{q_{0},q_{1},n}\):%
\begin{enumerate}
\item there exists a positive constant \(K\), where \(K < 1\), such that the discrete Hamiltonian map for \(L_{d}\) has error \(\mathcal{O}\lefri{K^{n}}\),
\item there exists a positive constant \(K\), where \(K < 1\), such that the discrete Legendre transforms of \(L_{d}\) have error \(\mathcal{O}\lefri{K^{n}}\),
\item there exists a positive constant \(K\), where \(K < 1\), such that \(L_{d}\) is equivalent to a discrete Lagrangian with error \(\mathcal{O}\lefri{K^{n}}\).
\end{enumerate}%
\end{theorem}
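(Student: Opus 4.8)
The plan is to obtain Theorem~\ref{MarsConvExt} by transcribing, essentially line for line, the proof of Theorem~\ref{MarsConv} from \citet{MaWe2001}, with the error scale $h^{p+1}$ systematically replaced by a geometric scale $K^{n}$, $K<1$. The argument in \citet{MaWe2001} is structural: it never uses that the small parameter is the step size, only that the error is carried coherently through the three operations that relate a discrete Lagrangian to its induced geometry — forming the discrete Legendre transforms $\mathbb{F}^{\pm}L_{d}$ by differentiating $L_{d}$ in its configuration slots, forming the discrete Hamiltonian map $\tilde{F}_{L_{d}} = \mathbb{F}^{+}L_{d}\circ(\mathbb{F}^{-}L_{d})^{-1}$ by composition and inversion, and comparing everything against the exact discrete Lagrangian $\EDLn$. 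Since each of these operations sends an $\mathcal{O}(K^{n})$ bound to an $\mathcal{O}(\tilde K^{n})$ bound for some $\tilde K \in (K,1)$, the cycle of implications transcribes unchanged; the possibility that the base of the geometric rate degrades at each step is exactly why the statement quantifies over ``there exists $K<1$'' rather than fixing the rate.

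Concretely I would split the proof into the equivalence $(2)\Leftrightarrow(3)$ and the equivalence $(1)\Leftrightarrow(2)$. The first is purely a statement about generating functions: in the forward direction one differentiates the estimate $L_{d}(\cdot,\cdot,n) = \EDLn + \mathcal{O}(K^{n})$ in $q_{0},q_{1}$, and in the reverse direction one integrates the closed one-form $D_{1}L_{d}\,dq_{0} + D_{2}L_{d}\,dq_{1}$ — whose components, up to sign, are the momentum parts of $\mathbb{F}^{-}L_{d}$ and $\mathbb{F}^{+}L_{d}$ — comparing throughout with the same construction for $\EDLn$ and absorbing any $n$-dependent function of $t$ alone into an equivalence. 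The second equivalence is the composition/inversion $\tilde{F}_{L_{d}} = \mathbb{F}^{+}L_{d}\circ(\mathbb{F}^{-}L_{d})^{-1}$ run in both directions, handled by the implicit function theorem with Jacobians $D_{1}D_{2}L_{d}(\cdot,\cdot,n)$ that are uniformly nonsingular for large $n$ because $L_{d}(\cdot,\cdot,n)\to\EDLn$ with $\EDLn$ regular. In each case the role of $\EDLn$ is the same as in \citet{MaWe2001}: its discrete Legendre transforms are the exact continuous Legendre transforms evaluated along the true solution through $q_{0},q_{1}$, and its discrete Hamiltonian map is the exact time-$h$ Euler--Lagrange flow, so agreement with $\EDLn$ at geometric rate is the same as agreement with the exact flow at geometric rate.

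The step I expect to be the main obstacle is the one that is also the delicate point in the classical theorem: upgrading a $C^{0}$ geometric estimate on $L_{d}-\EDLn$ to a $C^{1}$ (and $C^{2}$) geometric estimate, since an asymptotic bound cannot be differentiated term by term in general, and both the implicit function theorem step and the differentiation direction of $(2)\Leftrightarrow(3)$ require the estimates in those stronger norms together with uniform-in-$n$ control of the second derivatives of $L_{d}$. I would handle this exactly as it is handled implicitly in \citet{MaWe2001}: the $\mathcal{O}(K^{n})$ estimates in (1)--(3) are to be read in the appropriate $C^{2}$ topology on a compact neighborhood of the diagonal in $Q\times Q$, which is the form in which the convergence results in the remainder of the paper actually produce them — there the approximating curve $\galqn$ and its velocity $\dgalqn$ are shown to converge geometrically to the true solution, and this controls the discrete momenta (hence the Legendre transforms) directly, bypassing differentiation of $L_{d}$ in its endpoints — while the uniform nonsingularity of $D_{1}D_{2}L_{d}(\cdot,\cdot,n)$ follows from regularity of $L$ and that same $C^{2}$ convergence to the regular discrete Lagrangian $\EDLn$. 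With this reading the proof is a faithful adaptation of the variational error analysis theorem, the geometric slack being absorbed into the existential quantifier on $K$.
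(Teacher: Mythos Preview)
Your proposal is correct and follows essentially the same route as the paper: the appendix proof splits into $(3)\Leftrightarrow(2)$ via differentiating/integrating the error function $e_v = (L_d - L_d^E)/K^n$ (absorbing an $n$-dependent constant into the equivalence class), and $(2)\Leftrightarrow(1)$ via an implicit-function-theorem lemma on composition and inversion of geometrically perturbed maps, just as you outline. Two minor remarks: the paper's proof keeps the same $K$ throughout rather than degrading to some $\tilde K\in(K,1)$, so your existential slack is harmless but unnecessary; and your caution about the $C^0\to C^1$ upgrade is well-placed, since the paper handles it by simply assuming the error function $e_v$ is smooth in $(q_0,q_1)$ with derivatives bounded uniformly in $n$ on a compact set---which is exactly the $C^2$-topology reading you propose.
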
%
This theorem provides a fundamental tool for the analysis of Galerkin variational methods. Its proof is almost identical to that of Theorem \ref{MarsConv}, and can be found in the appendix. The critical result is that the order of the error of the discrete Hamiltonian flow map, from which we construct the discrete flow, has the same order as the discrete Lagrangian from which it is constructed. Thus, in order to determine the order of the error of the flow generated by spectral variational integrators, we need only determine how well the discrete Lagrangian approximates the exact discrete Lagrangian. This is a key result which greatly reduces the difficulty of the error analysis of Galerkin variational integrators.

Naturally, the goal of constructing spectral variational integrators is constructing a variational method that has geometric convergence. To this end, it is essential to establish that Galerkin type integrators inherit the convergence properties of the spaces which are used to construct them. The order optimality result is related to the problem of $\Gamma$-convergence (see, for example, \citet{Da1993}), as the Galerkin discrete Lagrangians are given by extremizers of an approximating sequence of variational problems, and the exact discrete Lagrangian is the extremizer of the limiting variational problem. The $\Gamma$-convergence of variational integrators was studied in \citet{MuOr2004}, and our approach involves a refinement of their analysis. We now state our results, which establish not only the geometric convergence of spectral variational integrators, but also order optimality of all Galerkin variational integrators under appropriate smoothness assumptions.%
%
%
\begin{theorem} \emph{(Order Optimality of Galerkin Variational Integrators)} \label{OptConv}
Given an interval \(\left[0,h\right]\) and a Lagrangian \(L:TQ \rightarrow \mathbb{R}\), let \(\truq\) be the exact solution to the Euler-Lagrange equations subject to the conditions \(\truq\lefri{0} = q_{0}\) and \(\truq\lefri{h} = q_{h}\), and let \(\galqn\) be the stationary point of a Galerkin variational discrete action, i.e. if \(\GDLn:Q\times Q \times \mathbb{R} \rightarrow \mathbb{R}\),
\begin{align*}
\GDLh{q_{0}}{q_{h}}{h} = \ext_{\galargsf{q_{0}}{q_{h}}} \mathbb{S}_{d}\lefri{\left\{q_{i}\right\}_{i=1}^{n}} = \ext_{\galargsf{q_{0}}{q_{h}}}h\sum_{j=1}^{m} b_{j} L\lefri{q_{n}\lefri{c_{j}h},\dot{q}_{n}\lefri{c_{j}h}},
\end{align*}%
then
\begin{align*}
\galqn = \argmin_{\galargsf{q_{0}}{q_{h}}}h\sum_{j=1}^{m} b_{j} L\lefri{q_{n}\lefri{c_{j}h},\dot{q}_{n}\lefri{c_{j}h}}.
\end{align*}
If:%
\begin{enumerate}
\item there exists a constant \(\ApproxC\) independent of \(h\), such that, for each \(h\), there exists a curve \(\optqn \in \FdFSpace\), such that,%
\begin{align*}
  \left|\lefri{\optqn\lefri{t},\doptqn\lefri{t}} - \lefri{\truq\lefri{t},\dtruq\lefri{t}}\right| & \leq  \ApproxC h^{n},
\end{align*}
\item there exists a closed and bounded neighborhood \(U \subset TQ\), such that \(\lefri{\truq\lefri{t},\dtruq\lefri{t}} \in U\), \(\lefri{\optqn\lefri{t},\doptqn\lefri{t}} \in U\) for all \(t\), and all partial derivatives of \(L\) are continuous on \(U\),
\item for the quadrature rule \(\mathcal{G}\lefri{f} = h\sum_{j=1}^{m} b_{j}f\lefri{c_{j}h} \approx \int_{0}^{h}f\lefri{t}\dt\), there exists a constant \(\QuadC\), such that,%
\begin{align*}
  \left|\int_{0}^{h} L\lefri{q_{n}\lefri{t}, \dot{q}_{n}\lefri{t}}\dt - h\sum_{j=1}^{m} b_{j} L\lefri{q_{n}\lefri{c_{j}h},\dot{q}_{n}\lefri{c_{j}h}}\right| \leq \QuadC h^{n+1},
\end{align*}%
for any \(q_{n} \in \mathbb{M}^{n}\lefri{\left[0,h\right],Q}\),
\item and the stationary points \(\truq\), \(\galqn\) minimize their respective actions,%
\end{enumerate}%
then%
\begin{align*}
  \left|\EDLh{q_{0}}{q_{h}}{h} - \GDLh{q_{0}}{q_{h}}{h}\right| \leq \OptC h^{n+1},
\end{align*}%
for some constant \(\OptC\) independent of \(h\), i.e. discrete Lagrangian \(L_{d}\) has error \(\mathcal{O}\lefri{h^{n+1}}\), and hence the discrete Hamiltonian flow map has error \(\mathcal{O}\lefri{h^{n+1}}\).%
\end{theorem}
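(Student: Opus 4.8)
The plan is a two-sided (quantitative $\Gamma$-convergence) estimate. I will bound $\GDLh{q_0}{q_h}{h}-\EDLh{q_0}{q_h}{h}$ from above using the good approximating curve $\optqn$ supplied by hypothesis (1), and bound $\EDLh{q_0}{q_h}{h}-\GDLh{q_0}{q_h}{h}$ from above using the fact that $\truq$ globally minimizes the continuous action; each bound will be $\mathcal{O}(h^{n+1})$, and Theorem \ref{MarsConv} then upgrades this Lagrangian estimate to the stated estimate on the discrete Hamiltonian flow map. Two facts set up the argument: (i) since $U$ is closed and bounded and every partial derivative of $L$ is continuous on $U$ by hypothesis (2), $L$ is Lipschitz on $U$ (shrinking $U$ to a convex neighborhood if needed) with a constant $\LagLipC$ independent of $h$; and (ii) the quadrature estimate of hypothesis (3) applies to any curve in $\FdFSpace$, in particular to $\galqn$ and to $\optqn$, which I take to be admissible, $\optqn\lefri{0}=q_0$, $\optqn\lefri{h}=q_h$ (automatic for the natural choice, the interpolant of $\truq$ through nodes including the endpoints).

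For the upper bound on $\GDLn-\EDLn$: because $\galqn$ minimizes the discrete action over admissible curves in $\FdFSpace$ and $\optqn$ is such a curve,
\[
\GDLh{q_0}{q_h}{h}=h\sum_{j=1}^{m}\bj L\lefri{\galqn\lefri{c_{j}h},\dgalqn\lefri{c_{j}h}}\le h\sum_{j=1}^{m}\bj L\lefri{\optqn\lefri{c_{j}h},\doptqn\lefri{c_{j}h}}.
\]
Estimating the right-hand side with the quadrature bound (3), then with the Lipschitz bound for $L$ and the approximation bound (1), and using $\EDLh{q_0}{q_h}{h}=\int_0^h L\lefri{\truq,\dtruq}\dt$, gives
\[
h\sum_{j=1}^{m}\bj L\lefri{\optqn\lefri{c_{j}h},\doptqn\lefri{c_{j}h}}\le\int_0^h L\lefri{\optqn,\doptqn}\dt+\QuadC h^{n+1}\le\EDLh{q_0}{q_h}{h}+\lefri{\LagLipC\ApproxC+\QuadC}h^{n+1}.
\]

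For the reverse bound: $\galqn$ is a polynomial, hence lies in $\CQ$, and has endpoints $q_0,q_h$, so by hypothesis (4), which says $\truq$ minimizes the continuous action, $\int_0^h L\lefri{\truq,\dtruq}\dt\le\int_0^h L\lefri{\galqn,\dgalqn}\dt$; applying the quadrature bound (3) to $\galqn$ yields
\[
\EDLh{q_0}{q_h}{h}\le\int_0^h L\lefri{\galqn,\dgalqn}\dt\le h\sum_{j=1}^{m}\bj L\lefri{\galqn\lefri{c_{j}h},\dgalqn\lefri{c_{j}h}}+\QuadC h^{n+1}=\GDLh{q_0}{q_h}{h}+\QuadC h^{n+1}.
\]
Combining the two bounds gives $\left|\EDLh{q_0}{q_h}{h}-\GDLh{q_0}{q_h}{h}\right|\le\OptC h^{n+1}$ with $\OptC=\LagLipC\ApproxC+\QuadC$ independent of $h$, and the claim on the discrete Hamiltonian flow map follows from the equivalence in Theorem \ref{MarsConv}.

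The step I expect to be the main obstacle is the upper bound on $\GDLn-\EDLn$, since it is the only place the approximation hypothesis (1) is used and it is sensitive to two technical conditions that must be secured by the choice of $U$ and $\optqn$: that $\optqn$ is genuinely admissible (so that it competes against the discrete minimizer $\galqn$) and that the segment joining $\lefri{\optqn,\doptqn}$ to $\lefri{\truq,\dtruq}$ remains inside the region $U$ on which $L$ is Lipschitz. The reverse bound, by contrast, reduces to minimality of the exact solution together with a single quadrature estimate. It is also worth noting that $\ApproxC$, $\QuadC$, and $\LagLipC$ are all required to be independent of $h$ — the first two are hypotheses precisely because they are not automatic, the third is fixed once $U$ is — which is exactly what makes the final constant $\OptC$ independent of $h$.
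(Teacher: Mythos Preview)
Your proof is correct and follows essentially the same approach as the paper: both use the minimality of \(\galqn\) for the discrete action, the minimality of \(\truq\) for the continuous action, the Lipschitz bound on \(L\) over \(U\), and the quadrature estimate, combined into a two-sided sandwich. The only difference is organizational: the paper adds and subtracts \(\int_0^h L(\optqn,\doptqn)\,\dt\) and applies the triangle inequality, whereas you bound \(\GDLn-\EDLn\) and \(\EDLn-\GDLn\) directly; your arrangement in fact yields the slightly sharper constant \(\OptC=\LagLipC\ApproxC+\QuadC\) versus the paper's \(2\LagLipC\ApproxC+\QuadC\).
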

%
%
\begin{proof}%
First, we rewrite both the exact discrete Lagrangian and the Galerkin discrete Lagrangian:%
\begin{align*}
\left|\EDLh{q_{0}}{q_{h}}{h} - \GDLh{q_{0}}{q_{h}}{h}\right| &= \left|\int_{0}^{h}L\lefri{\truq\lefri{t},\dtruq\lefri{t}}\dt - \mathcal{G}\lefri{L\lefri{\galqn\lefri{t},\dgalqn\lefri{t}}}\right| \\
&= \left|\int_{0}^{h}L\lefri{\truq\lefri{t},\dtruq\lefri{t}}\dt - h\sum_{j=1}^{m}b_{j} L\lefri{\galqn\lefri{c_{j}h},\dgalqn\lefri{c_{j}h}}\right| \\
&= \left|\int_{0}^{h}L\lefri{\truq,\dtruq}\dt - h\sum_{j=1}^{m}b_{j} L\lefri{\galqn,\dgalqn}\right|,
\end{align*}%
where in the last line, we have suppressed the \(t\) argument, a convention we will continue throughout the proof. Now we introduce the action evaluated on the \(\optqn\) curve, which is an approximation with error \(\mathcal{O}\lefri{h^{n}}\) to the exact solution \(\truq\):%
\begin{subequations}
\begin{align}
 \left|\int_{0}^{h}L\lefri{\truq,\dtruq}\dt - h\sum_{j=1}^{m}b_{j} L\lefri{\galqn,\dgalqn}\right| &= 
 \left|\int_{0}^{h}L\lefri{\truq,\dtruq}\dt - \int_{0}^{h} L\lefri{\optqn, \doptqn} \dt \right. \nonumber\\
& \hspace{10em} + \left.\int_{0}^{h} L\lefri{\optqn,\doptqn}\dt - h\sum_{j=1}^{m}b_{j} L\lefri{\galqn,\dgalqn}\right| \nonumber \\
& \leq \left|\int_{0}^{h}L\lefri{\truq,\dtruq}\dt - \int_{0}^{h} L \lefri{\optqn, \doptqn}\dt\right| \label{FirstTerm} \\
& \hspace{10em} +\left|\int_{0}^{h} L \lefri{\optqn,\doptqn} \dt - h\sum_{j=1}^{m}b_{j}L\lefri{\galqn,\dgalqn}\right|. \label{SecondTerm}
\end{align}
\end{subequations}
Considering the first term (\ref{FirstTerm}):%
\begin{align*}
  \left|\int_{0}^{h}L\lefri{\truq,\dtruq}\dt - \int_{0}^{h} L \lefri{\optqn, \doptqn}\dt\right| &= \left|\int_{0}^{h}L\lefri{\truq,\dtruq} - L \lefri{\optqn, \doptqn} \dt\right| \\
& \leq \int_{0}^{h} \left|L\lefri{\truq,\dtruq} - L\lefri{\optqn,\doptqn}\right|\dt. 
\end{align*}%
By assumption, all partials of \(L\) are continuous on \(U\), and since \(U\) is closed and bounded, this implies \(L\) is Lipschitz on \(U\). Let \(\LagLipC\) denote that Lipschitz constant. Since, again by assumption, \(\lefri{\truq,\dtruq} \in U\) and \(\lefri{\optqn,\doptqn} \in U\), we can rewrite:%
\begin{align*}
\int_{0}^{h}\left|L\lefri{\truq,\dtruq} - L \lefri{\optqn,\doptqn}\right|\dt & \leq  \int_{0}^{h} \LagLipC \left|\lefri{\truq,\dtruq} - \lefri{\optqn,\doptqn}\right| \dt \\
&\leq \int_{0}^{h} \LagLipC \ApproxC h^{n} \dt\\
&= \LagLipC \ApproxC h^{n+1}, 
\end{align*}%
where we have made use of the best approximation estimate. Hence,%
\begin{align}
\left|\int_{0}^{h} L\lefri{\truq, \dtruq} \dt - \int_{0}^{h} L\lefri{\optqn,\doptqn} \dt\right| \leq L_{\alpha}C_{1}h^{n+1}. \label{FirstTermIneq}
\end{align}%
Next, considering the second term (\ref{SecondTerm}),%
\begin{align*}
\left|\int_{0}^{h} L\lefri{\optqn,\doptqn} \dt - h\sum_{j=1}^{m} b_{j} L\lefri{\galqn,\dgalqn} \right|,
\end{align*}%
since \(\galqn\), the stationary point of the discrete action, minimizes its action and \(\optqn \in \mathbb{M}^{n}\lefri{\left[0,h\right],Q}\),%
\begin{align}
h\sum_{j=1}^{m} b_{j} L\lefri{\galqn,\dgalqn} \leq h\sum_{j=1}^{m} b_{j} L \lefri{\optqn,\doptqn} \leq \int_{0}^{h} L\lefri{\optqn,\doptqn} \dt + \QuadC h^{n+1} \label{UpperBound}
\end{align}%
where the inequalities follow from the assumptions on the order of the quadrature rule. Furthermore,%
\begin{align}
h\sum_{j=1}^{m} b_{j} L \lefri{\galqn,\dgalqn} &\geq \int_{0}^{h} L\lefri{\galqn,\dgalqn} \dt - \QuadC h^{n+1} \nonumber \\
&\geq \int_{0}^{h} L\lefri{\truq,\dtruq}  \dt - \QuadC h^{n+1} \nonumber\\
&\geq \int_{0}^{h} L\lefri{\optqn,\doptqn} \dt - \LagLipC \ApproxC h^{n+1} - \QuadC h^{n+1}, \label{LowerBound}
\end{align}%
where the inequalities follow from (\ref{FirstTermIneq}), the order of the quadrature rule, and the assumption that \(\truq\) minimizes its action. Putting (\ref{UpperBound}) and (\ref{LowerBound}) together, we can conclude:%
\begin{align}
\left|\int_{0}^{h}L\lefri{\optqn,\doptqn}\dt - h\sum_{j=1}^{m} b_{j} L\lefri{\galqn,\dgalqn}\right| \leq \lefri{\LagLipC \ApproxC + \QuadC}h^{n+1} \label{SecondTermIneq}.
\end{align}%
Now, combining the bounds (\ref{FirstTermIneq}) and (\ref{SecondTermIneq}) in (\ref{FirstTerm}) and (\ref{SecondTerm}), we can conclude%
\begin{align*}
\left|\EDLh{q_{0}}{q_{h}}{h} - \GDLh{q_{0}}{q_{h}}{h}\right| \leq \lefri{2\LagLipC \ApproxC + \QuadC}h^{n+1}
\end{align*}
which, combined with Theorem \ref{MarsConv}, establishes the order of the error of the integrator.
\end{proof}%
%
%
The above proof establishes a significant convergence result for Galerkin variational integrators, namely that for sufficiently well behaved Lagrangians, Galerkin variational integrators will produce discrete approximate flows that converge to the exact flow as \(h\rightarrow0\) with the highest possible order allowed by the approximation space, provided the quadrature rule is of sufficiently high order.

We will discuss assumption 4 in \S \ref{MinAction}. While in general we cannot assume that stationary points of the action are minimizers, it can be shown that for Lagrangians of the canonical form%
\begin{align*}
L\lefri{q,\dot{q}} = \dot{q}^{T}M\dot{q} - V\lefri{q},
\end{align*}%
under some mild assumptions on the derivatives of \(V\) and the accuracy of the quadrature rule, there always exists an interval \(\left[0,h\right]\) over which stationary points are minimizers. In \S \ref{MinAction} we will show the result extends to the discretized action of Galerkin variational integrators. A similar result was established in \citet{MuOr2004}.

Geometric convergence of spectral variational integrators is not strictly covered under the proof of order optimality. While the above theorem establishes convergence of Galerkin variational integrators by shrinking \(h\), the interval length of each discrete Lagrangian, spectral variational integrators achieve convergence by holding the interval length of each discrete Lagrangian constant and increasing the dimension of the approximation space \(\FdFSpace\). Thus, for spectral variational integrators, we have the following analogous convergence theorem: %
%
%
\begin{theorem} \emph{(Geometric Convergence of Spectral Variational Integrators)} \label{SpecConv}
Given an interval \(\left[0,h\right]\) and a Lagrangian \(L:TQ \rightarrow \mathbb{R}\), let \(\truq\) be the exact solution to the Euler-Lagrange equations, and \(\galqn\) be the stationary point of the spectral variational discrete action:
\begin{align*}
\SDLN{q_{0}}{q_{h}}{n} = \ext_{\galargsf{q_{0}}{q_{h}}} \mathbb{S}_{d}\lefri{\left\{q_{i}\right\}_{i=1}^{n}}= \ext_{\galargsf{q_{0}}{q_{h}}} h\sum_{j=0}^{m_{n}} \bnj L\lefri{q_{n}\lefri{\cnjh},\dot{q}_{n}\lefri{\cnjh}}.
\end{align*}
If:%
\begin{enumerate}
\item there exists constants \(\ApproxC,\ApproxK\), \(\ApproxK < 1\), independent of \(n\) such that, for each \(n\), there exists a curve \(\optqn \in \mathbb{M}^{n}\lefri{\left[0,h\right],Q}\), such that,%
\begin{align*}
  \left|\lefri{\truq,\dtruq} - \lefri{\optqn,\doptqn}\right| & \leq  \ApproxC \ApproxK^{n},
\end{align*}
\item there exists a closed and bounded neighborhood \(U \subset TQ\), such that, \(\lefri{\truq\lefri{t},\dtruq\lefri{t}} \in U\) and \(\lefri{\optqn\lefri{t},\doptqn\lefri{t}} \in U\) for all \(t\) and \(n\), and all partial derivatives of \(L\) are continuous on \(U\),
\item for the sequence of quadrature rules \(\mathcal{G}_{n}\lefri{f} = \sum_{j=1}^{m_{n}} \bnj f\lefri{\cnjh} \approx \int_{0}^{h}f\lefri{t}\dt\), there exists constants \(\QuadC\), \(\QuadK\), \(\QuadK < 1\), independent of \(n\) such that%
\begin{align*}
  \left|\int_{0}^{h} L\lefri{q_{n}\lefri{t}, \dot{q}_{n}\lefri{t}}\dt - h\sum_{j=1}^{m_{n}} \bnj L\lefri{q_{n}\lefri{\cnjh},\dot{q}_{n}\lefri{\cnjh}}\right| \leq \QuadC \QuadK^{n},
\end{align*}%
for any \(q_{n}\in \mathbb{M}^{n}\lefri{\left[0,h\right],Q}\),
\item and the stationary points \(\truq\), \(\galqn\) minimize their respective actions,%
\end{enumerate}%
then%
\begin{align}
  \left|\EDL{q_{0}}{q_{1}} - \SDLN{q_{0}}{q_{1}}{n}\right| \leq \SpecC\SpecK^{n} \label{GeoBound}
\end{align}%
for some constants \(\SpecC,\SpecK\), \(\SpecK < 1\), independent of \(n\), and hence the discrete Hamiltonian flow map has error \(\mathcal{O}\lefri{\SpecK^{n}}\).%
\end{theorem}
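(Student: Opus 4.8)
The plan is to mirror the structure of the proof of Theorem \ref{OptConv} verbatim, replacing every occurrence of the algebraic rate $h^{n}$ (resp. $h^{n+1}$) with the geometric rate $\ApproxK^{n}$ (resp. an appropriate geometric rate), and to track carefully how the leftover powers of $h$ — which are now a \emph{fixed} constant, not a small parameter — get absorbed into the constants. Concretely, I would start by rewriting
\begin{align*}
\left|\EDL{q_{0}}{q_{1}} - \SDLN{q_{0}}{q_{1}}{n}\right| = \left|\int_{0}^{h}L\lefri{\truq,\dtruq}\dt - h\sum_{j=1}^{m_{n}} \bnj L\lefri{\galqn,\dgalqn}\right|,
\end{align*}
insert and subtract $\int_{0}^{h} L\lefri{\optqn,\doptqn}\dt$, and apply the triangle inequality to split the error into the analogue of (\ref{FirstTerm}) and (\ref{SecondTerm}).

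For the first term, assumption 2 again makes $L$ Lipschitz on the closed bounded set $U$ with constant $\LagLipC$, and assumption 1 gives $\left|\lefri{\truq,\dtruq} - \lefri{\optqn,\doptqn}\right| \leq \ApproxC\ApproxK^{n}$, so
\begin{align*}
\left|\int_{0}^{h}L\lefri{\truq,\dtruq}\dt - \int_{0}^{h} L\lefri{\optqn,\doptqn}\dt\right| \leq \int_{0}^{h}\LagLipC\ApproxC\ApproxK^{n}\dt = h\LagLipC\ApproxC\ApproxK^{n},
\end{align*}
i.e.\ a bound of the form $\tilde{C}\ApproxK^{n}$ with $\tilde{C} = h\LagLipC\ApproxC$. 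For the second term I would reuse the sandwich argument of (\ref{UpperBound})--(\ref{LowerBound}): since $\galqn$ minimizes the discretized action and $\optqn \in \mathbb{M}^{n}\lefri{\left[0,h\right],Q}$, assumption 3 (quadrature error $\leq \QuadC\QuadK^{n}$) gives an upper bound $h\sum \bnj L\lefri{\galqn,\dgalqn} \leq \int_{0}^{h} L\lefri{\optqn,\doptqn}\dt + \QuadC\QuadK^{n}$, and the lower bound follows from the quadrature estimate applied to $\galqn$, the fact that $\truq$ minimizes the continuous action (assumption 4), and the first-term bound, yielding $h\sum \bnj L\lefri{\galqn,\dgalqn} \geq \int_{0}^{h} L\lefri{\optqn,\doptqn}\dt - h\LagLipC\ApproxC\ApproxK^{n} - \QuadC\QuadK^{n}$. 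Combining, $\left|\int_{0}^{h}L\lefri{\optqn,\doptqn}\dt - h\sum \bnj L\lefri{\galqn,\dgalqn}\right| \leq h\LagLipC\ApproxC\ApproxK^{n} + \QuadC\QuadK^{n}$.

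Adding the two contributions gives a bound of the form $2h\LagLipC\ApproxC\ApproxK^{n} + \QuadC\QuadK^{n}$, which is a sum of two geometrically decaying terms with (possibly different) ratios $\ApproxK,\QuadK < 1$. The final bookkeeping step is to collapse this into a single $\SpecC\SpecK^{n}$: take $\SpecK = \max\lefri{\ApproxK,\QuadK} < 1$ and $\SpecC = 2h\LagLipC\ApproxC + \QuadC$, using $\ApproxK^{n} \leq \SpecK^{n}$ and $\QuadK^{n} \leq \SpecK^{n}$. Then invoke Theorem \ref{MarsConvExt} (the geometric-convergence analogue of Theorem \ref{MarsConv}) to pass from the $\mathcal{O}\lefri{\SpecK^{n}}$ error in the discrete Lagrangian to the $\mathcal{O}\lefri{\SpecK^{n}}$ error in the discrete Hamiltonian flow map. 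The only real subtlety — the ``obstacle,'' though it is a mild one — is conceptual rather than technical: unlike in Theorem \ref{OptConv}, here $h$ is held fixed while $n\to\infty$, so one must be careful that the constants $\ApproxC,\QuadC$ and the neighborhood $U$ are genuinely uniform in $n$ (which is exactly what assumptions 1--3 stipulate), and that the factor of $h$ appearing in front of the Lipschitz estimate is harmless because it is a fixed constant absorbed into $\SpecC$; nothing in the argument requires $h$ to be small.
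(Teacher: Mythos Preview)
Your proposal is correct and follows essentially the same approach as the paper's proof in the appendix: the same insertion of $\int_{0}^{h}L\lefri{\optqn,\doptqn}\dt$, the same Lipschitz bound for the first term, the same sandwich argument via assumptions 3 and 4 for the second term, and the same final choice $\SpecK = \max\lefri{\ApproxK,\QuadK}$, $\SpecC = 2h\LagLipC\ApproxC + \QuadC$ before invoking Theorem~\ref{MarsConvExt}. Your explicit remark that the fixed $h$ is simply absorbed into $\SpecC$ is exactly the point the paper leaves implicit.
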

The proof of the above theorem is very similar to that of order optimality, and would be tedious to repeat here. It can be found in the appendix. The main differences between the proofs are the assumption of the sequence of converging functions in the increasingly high-dimensional approximation spaces, and the assumption of a sequence of increasingly high-order quadrature rules. These assumptions are used in the obvious way in the modified proof. 

\subsection{Minimization of the Action} \label{MinAction}
One of the major assumptions made in the convergence theorems (\ref{OptConv}) and (\ref{SpecConv}) is that the the stationary points of both the continuous and discrete actions are minimizers over the interval \(\left[0,h\right]\). This type of minimization requirement is similar to the one made in the paper on \(\Gamma\)-convergence of variational integrators by \citet{MuOr2004}. In fact, the results in \citet{MuOr2004} can easily be extended to demonstrate that for sufficiently well-behaved Lagrangians of the form%
\begin{align*}
L\lefri{q,\dot{q}} = \frac{1}{2}\dot{q}^{T}M\dot{q} - V\lefri{q},
\end{align*}%
where \(q \in \CQ\), there exists an interval \(\left[0,h\right]\), such that stationary points of the Galerkin action are minimizers.\\
%
%
\begin{theorem}%
Consider a Lagrangian of the form%
\begin{align*}
L\lefri{q,\dot{q}} = \frac{1}{2}\dot{q}^{T}M\dot{q} - V\lefri{q}
\end{align*}%
where \(q \in \CQ\) and each component \(q^{d}\) of \(q\), \(q^{d} \in \CQ\), is a polynomial of degree at most \(s\). Assume \(M\) is symmetric positive-definite and all second-order partial derivatives of \(V\) exist, and are continuous and bounded. Then, there exists a time interval \(\left[0,h\right]\) such that stationary points of the discrete action,%
\begin{align*}
\mathbb{S}_{d}\lefri{\left\{q_{k}^{i}\right\}_{i=1}^{n}} = h\sum_{j=1}^{m} b_{j} \lefri{\frac{1}{2} \dgalqn\lefri{c_{j}h}^{T}M\dgalqn\lefri{c_{j}h} - V\lefri{\galqn\lefri{c_{j}h}}},
\end{align*}%
on this time interval are minimizers if the quadrature rule used to construct the discrete action is of order at least \(2s+1\).%
\end{theorem}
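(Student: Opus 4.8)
The plan is to show that on a sufficiently short interval the discrete action $\mathbb{S}_{d}$ is a strictly convex function of the internal weights, so that a stationary point (which exists and is unique by Theorem~\ref{ExisUniq} and Lemmas~\ref{AInvert}--\ref{AInvert}'s companion) is automatically the global minimizer. Concretely, I would compute the Hessian of $\mathbb{S}_{d}$ with respect to the free internal weights $\{q_{k}^{i}\}_{i=2}^{n-1}$ (the endpoints being fixed by the boundary conditions) and show it is positive-definite for $h$ small. Writing $\galqn\lefri{t} = \sum_{i} q_{k}^{i}\phi_{i}\lefri{t}$, the kinetic part of $\mathbb{S}_{d}$ contributes the matrix with entries $h\sum_{j} b_{j} \dot{\phi}_{i}\lefri{c_{j}h}^{T}M\dot{\phi}_{p}\lefri{c_{j}h}$, which is exactly the block of the matrix $A$ from \eqref{AMatrix3}; since the quadrature rule has order at least $2s+1$ and $\galqn$ ranges over polynomials of degree at most $s$ (so $\dgalqn$ has degree at most $s-1$ and products of derivatives have degree at most $2s-2 < 2s+1$), this block equals the exact Gram matrix $\int_{0}^{h} \dot{\phi}_{i}^{T}M\dot{\phi}_{p}\,\dt$, which is symmetric positive-definite by the argument in the proof of Lemma~\ref{AInvert} ($M$ positive-definite, $\{\dot{\phi}_{i}\}$ linearly independent, $\langle\cdot,\cdot\rangle$ non-degenerate). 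The potential part contributes $-h\sum_{j} b_{j}\,\phi_{i}\lefri{c_{j}h}^{T}\,\mathrm{Hess}\,V\lefri{\galqn\lefri{c_{j}h}}\,\phi_{p}\lefri{c_{j}h}$, which is bounded in norm by $C\,h$ uniformly, using $b_{j}>0$, $\sum b_{j}=1$, the assumed boundedness of the second derivatives of $V$, and the uniform boundedness of the $\phi_{i}$ on $\left[0,h\right]$.

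The key step is then a scaling argument, exactly parallel to the proof that $\MNorm{A^{-1}}{\infty}\le\MNorm{A_{1}^{-1}}{\infty}$: rescaling to the reference interval $\left[0,1\right]$, the kinetic Hessian block scales like $\tfrac{1}{h}$ times a fixed positive-definite matrix $K_{1}$ (because $\dot{\phi}_{i}^{h}\lefri{t} = \tfrac{1}{h}\dot{\phi}_{i}\lefri{t/h}$ contributes two factors of $\tfrac1h$ and the quadrature weight contributes a factor of $h$), while the potential Hessian block stays bounded (it scales like $h$ times a bounded matrix). Hence the full Hessian is $\tfrac{1}{h}\bigl(K_{1} + O(h^{2})\bigr)$, and for $h$ below a threshold determined by the smallest eigenvalue of $K_{1}$ and the uniform bound on $\mathrm{Hess}\,V$, this is positive-definite. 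A positive-definite Hessian everywhere on the (convex) space of internal weights gives strict convexity of $\mathbb{S}_{d}$, so its unique critical point is the global minimizer.

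I expect the main obstacle to be making the uniformity in the potential term fully rigorous: one must ensure the bound on $\mathrm{Hess}\,V\lefri{\galqn\lefri{c_{j}h}}$ is genuinely uniform over the relevant curves $\galqn$ and over $h$, which is why the hypothesis asks for the second derivatives of $V$ to be \emph{bounded} (not merely continuous) on all of $Q$ — this sidesteps having to first localize the curve near the true solution. A secondary point requiring care is bookkeeping the degree count to confirm the quadrature order $2s+1$ really makes the kinetic block exact: the integrand $\dgalqn^{T}M\dgalqn$ appearing in the Hessian has degree at most $2s-2$, comfortably within the exactness range, and the same check shows the quadrature does not distort the positive-definiteness of $K_1$. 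The rest is the scaling identity, which is routine given the analogous computation already carried out for $A$ in the lemma preceding this theorem.
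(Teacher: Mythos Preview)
Your approach is correct but takes a different route from the paper's at the crucial step. Both proofs examine the second-order behaviour of $\mathbb{S}_d$ around a stationary point and invoke the quadrature order $2s+1$ to treat polynomial quadratic forms exactly. The paper, however, does not work with the coefficient Hessian and a scaling estimate on $K_1$. Instead it expands $\mathbb{S}_d(\{q_k^i+\delta q_k^i\})-\mathbb{S}_d(\{q_k^i\})$ directly, uses stationarity to eliminate the first-order terms, bounds $\delta q^{T} R\,\delta q\le DC_{R}\,\delta q^{T}\delta q$ pointwise via the global bound on the second derivatives of $V$, and then---because $\delta\dot q^{T}\delta\dot q$ and $\delta q^{T}\delta q$ are polynomials of degree at most $2s$---replaces the quadrature sum by the exact integral. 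At that point the paper applies the Poincar\'e inequality on $H^{1}_{0}([0,h])$, giving $\tfrac{1}{2}\bigl(\tfrac{m\pi^{2}}{h^{2}}-DC_{R}\bigr)\int_{0}^{h}\delta q^{T}\delta q\,\dt>0$ whenever $h<\sqrt{m\pi^{2}/(DC_{R})}$.

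What each approach buys: your matrix-scaling argument is self-contained and avoids Poincar\'e, but the threshold on $h$ you obtain depends on the smallest eigenvalue of the reference Gram matrix $K_{1}$, which in turn depends on $n$ and on the choice of basis. The paper's Poincar\'e route produces an explicit threshold $h<\sqrt{m\pi^{2}/(DC_{R})}$ that is \emph{independent of $n$ and $s$}. This uniformity matters downstream: the spectral-convergence results (Theorems~\ref{SpecConv} and~\ref{GeoGalerk}) need the minimization hypothesis to hold on a fixed interval as $n\to\infty$. Your argument proves the theorem exactly as stated, but to feed it into the later $n$-refinement analysis you would have to show $\lambda_{\min}(K_{1})$ is bounded below uniformly in $n$---which is essentially a discrete Poincar\'e inequality in disguise.
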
%
We quickly note that the assumption that each component of \(q\), \(q^{d}\), is a polynomial of degree \emph{at most} \(s\) allows for discretizations where different components of the configuration space are discretized with polynomials of different degrees. This allows for more efficient discretizations where slower evolving components are discretized with lower-degree polynomials than faster evolving ones.
%
%
\begin{proof}%
Let \(\galqn\) be a stationary point of the discrete action \(\mathbb{S}_{d}\lefri{\cdot}\), and let \(\delta q\) be an arbitrary perturbation of the stationary point \(\galqn\), under the conditions \(\delta q^{d} \in P_{S_{d}}\), \(\delta q\lefri{0} = \delta q\lefri{h} = 0\), which is uniquely defined by \(\left\{\delta q_{k}^{i}\right\}_{i=1}^{n} \subset Q\). Then,%
\begin{align*}
&\mathbb{S}_{d}\lefri{\left\{q_{k}^{i} + \delta q_{k}^{i}\right\}_{i=1}^{n}} - \mathbb{S}_{d}\lefri{\left\{q_{k}^{i}\right\}_{i=1}^{n}} \\
&= h\sum_{j}^{m} b_{j} \lefri{\frac{1}{2} \lefri{\dgalqn + \delta \dot{q}}^{T}M\lefri{\dgalqn + \delta \dot{q}} - V\lefri{\galqn + \delta q}} - h\sum_{j}^{m} b_{j} \lefri{\frac{1}{2} \dgalqn^{T}M\dgalqn - V\lefri{\galqn}}\\
&= h\sum_{j}^{m} b_{j} \lefri{\frac{1}{2} \lefri{\dgalqn + \delta \dot{q}}^{T}M\lefri{\dgalqn + \delta \dot{q}} - V\lefri{\galqn + \delta q} - \frac{1}{2} \dgalqn^{T}M\dgalqn + V\lefri{\galqn}}.
\end{align*}%
Making use of Taylor's remainder theorem, we expand:%
\begin{align*}
 V\lefri{\galqn + \delta q} = V\lefri{\galqn} + \nabla V\lefri{\galqn}\cdot \delta q + \frac{1}{2}\delta \galqn^{T} R \delta \galqn,
\end{align*}%
where \(\left|R_{lm}\right| \leq \sup_{l,m}\left|\frac{\partial^{2}V}{\partial q_{l} \partial q_{m}}\right|\). Using this expansion, we rewrite%
\begin{align*}
\mathbb{S}_{d}\lefri{\left\{q_{k}^{i} + \delta q_{k}^{i}\right\}_{i=1}^{n}} - \mathbb{S}_{d}\lefri{\left\{q_{k}^{i}\right\}_{i=1}^{n}} &= h\sum_{j}^{m} b_{j} \left(\frac{1}{2} \lefri{\dgalqn + \delta \dot{q}}^{T}M\lefri{\dgalqn + \delta \dot{q}} - V\lefri{\galqn} - \nabla V\lefri{\galqn} \cdot \delta q \right. \\
& \hspace{5em} -\left. \frac{1}{2} \delta q^{T} R \delta q - \lefri{\frac{1}{2} \dgalqn^{T}M\dgalqn + V\lefri{\galqn}}\right)
\end{align*}%
which, given the symmetry in \(M\), rearranges to:%
\begin{align*}
\mathbb{S}_{d}\lefri{\left\{q_{k}^{i} + \delta q_{k}^{i}\right\}_{i=1}^{n}} - \mathbb{S}_{d}\lefri{\left\{q_{k}^{i}\right\}_{i=1}^{n}} &= h\sum_{j}^{m} b_{j} \lefri{\dgalqn^{T}M\delta\dot{q} - \nabla V\lefri{\galqn} \cdot \delta q + \frac{1}{2} \delta \dot{q}^{T}M\delta \dot{q}  - \frac{1}{2} \delta q^{T} R \delta q}.
\end{align*}%
Now, it should be noted that the stationarity condition for the discrete Euler-Lagrange equations is%
\begin{align*}
h\sum_{j=1}^{m} b_{j}\lefri{\dgalqn^{T}M\delta \dot{q} - \nabla V\lefri{\galqn} \cdot \delta q} = 0
\end{align*}%
for arbitrary \(\delta q\), which allows us to simplify the expression to%
\begin{align*}
 \mathbb{S}_{d}\lefri{\left\{q_{k}^{i} + \delta q_{k}^{i}\right\}_{i=1}^{n}} - \mathbb{S}_{d}\lefri{\left\{q_{k}^{i}\right\}_{i=1}^{n}} = h\sum_{j}^{m} b_{j} \lefri{\frac{1}{2} \delta \dot{q}^{T}M\delta \dot{q}  - \frac{1}{2} \delta q^{T} R \delta q}.
\end{align*}%
Now, using the assumption that the partial derivatives of \(V\) are bounded, \(\left|R_{lm}\right| \leq \left|\frac{\partial^{2} V}{\partial q_{l}\partial q_{m}}\right| < C_{R}\), and standard matrix inequalities, we get the inequality:%
\begin{align} 
\delta q^{T} R \delta q \leq \left\|R\delta q\right\|_{2} \left\|\delta q\right\|_{2} \leq \left\|R\right\|_{2} \left\|\delta q\right\|_{2}^{2} \leq \left\|R\right\|_{F} \left\|\delta q\right\|^{2}_{2} \leq DC_{R}\left\|\delta q\right\|^{2}_{2} = DC_{R}\delta q^{T}\delta q, \label{HBound}
\end{align}%
where \(D\) is the number of spatial dimensions of \(Q\). Thus%
\begin{align*}
h\sum_{j}^{m} b_{j} \lefri{\frac{1}{2} \delta \dot{q}^{T}M\delta \dot{q}  - \frac{1}{2} \delta q^{T} R \delta q} \geq  h\sum_{j}^{m} b_{j} \lefri{\frac{1}{2} \delta \dot{q}^{T}M\delta \dot{q}  - \frac{1}{2} DC_{R}\delta q^{T}\delta q}.
\end{align*}%
Because \(M\) is symmetric positive-definite, there exists \(m > 0\) such that \(x^{T}Mx \geq mx^{T}x\) for any \(x\). Hence, %
\begin{align*}
h\sum_{j}^{m} b_{j} \lefri{\frac{1}{2} \delta \dot{q}^{T}M\delta \dot{q}  - \frac{1}{2} DC_{R}\delta q^{T}\delta q} \geq h\sum_{j}^{m} b_{j} \lefri{\frac{1}{2} m\delta\dot{q}^{T}\delta\dot{q}  - \frac{1}{2} DC_{R}\delta q^{T}\delta q}.
\end{align*}%
Now, we note that since each component of \(\delta q\) is a polynomial of degree at most \(s\), \(\delta q^{T} \delta q\) and \(\delta \dot{q}^{T} \delta \dot{q}\) are both polynomials of degree less than or equal to \(2s\). Since our quadrature rule is of order \(2s+1\), the quadrature rule is exact, and we can rewrite%
\begin{align*}
h\sum_{j}^{m} b_{j} \lefri{\frac{1}{2} m\delta \dot{q}^{T}\delta \dot{q}  - \frac{1}{2} DC_{R} \delta q^{T} \delta q} &= \frac{1}{2}\int_{0}^{h} m \delta \dot{q}^{T}\delta \dot{q} - DC_{R} \delta q^{T} \delta q\dt \\
  &= \frac{1}{2}\lefri{\int_{0}^{h} m \delta \dot{q}^{T}\delta \dot{q} \dt - \int_{0}^{h} DC_{R} \delta q^{T} \delta q \dt}. 
\end{align*}%
From here, we note that \(\delta q \in \HoQ\), and make use of the Poincar\'{e} inequality to conclude%
\begin{align*}
\frac{1}{2}\lefri{\int_{0}^{h}m \delta \dot{q}^{T}\delta \dot{q} \dt - \int_{0}^{h} nC_{R} \delta q^{T} \delta q \dt} & \geq \frac{1}{2} \lefri{m\frac{\pi^{2}}{h^{2}}\int_{0}^{h} \delta q^{T}\delta q \dt - DC_{R}\int_{0}^{h} \delta q^{T} \delta q \dt} \\
  &= \frac{1}{2}\lefri{\frac{m\pi^{2}}{h^{2}} - DC_{R}} \int_{0}^{h} \delta q^{T} \delta q \dt. 
\end{align*}%
Since \(\int_{0}^{h} \delta q^{T} \delta q \dt > 0\), 
\begin{eqnarray*}
 \mathbb{S}_{d}\lefri{\left\{q_{k}^{i} + \delta q_{k}^{i}\right\}_{i=1}^{n}} - \mathbb{S}_{d}\lefri{\left\{q_{k}^{i}\right\}_{i=1}^{n}} \geq \frac{1}{2}\lefri{\frac{m\pi^{2}}{h^{2}} - DC_{R}} \int_{0}^{h} \delta q^{T} \delta q \dt > 0
\end{eqnarray*}
so long as \(h < \sqrt{\frac{m \pi^{2}}{DC_{R}}}\).
\end{proof}

\subsection{Convergence of Galerkin Curves and Noether Quantities}
 
\subsubsection{Galerkin Curves}

In order to construct the one-step method, spectral variational integrators determine a curve,%
\begin{align*}
\galqn\lefri{t} = \sum_{i=1}^{n}q^{i}_{k}\phi_{i}\lefri{t},
\end{align*}%
which satisfies%
\begin{align*}
\galqn\lefri{t} &= \argmin_{\galargsf{q_{k}}{q_{k+1}}} h\sum_{j=1}^{m}b_{j}L\lefri{\galqn\lefri{c_{j}h},\dgalqn\lefri{c_{j}h}}.
\end{align*}%
Evaluating this curve at \(h\) defines the next step of the one-step method, \(q_{k+1} = \galqn\lefri{h}\), but the curve itself has many desirable properties which makes it a good continuous approximation to the true solution of the Euler Lagrange equations \(\truq\lefri{t}\). In this section, we will examine some of the favorable properties of \(\galqn\lefri{t}\), hereafter referred to as the \emph{Galerkin curve}.

However, before discussing the properties of the Galerkin curve, it is useful review the different curves with which we are working. We have already defined the Galerkin curve, \(\galqn\lefri{t}\), and we will also be making use of the local solution to the Euler-Lagrange equations \(\truq\lefri{t}\), where%
\begin{align*}
\truq\lefri{t} = \argmin_{\truqargsf{q_{k}}{q_{k+1}}} \int_{0}^{h}L\lefri{q\lefri{t},\dot{q}\lefri{t}}\dt.
\end{align*}%
However, while for each interval \(\truq\) satisfies the Euler-Lagrange equations exactly, it is not the exact solution of the Euler-Lagrange equations globally, as \(q_{k} \neq \Phi_{kh}\lefri{q_{0},\dot{q}_{0}}\), where \(\Phi_{t}\lefri{q_{0},\dot{q}_{0}}\) is the flow of the Euler-Lagrange vector field. This is particularly important when discussing invariants, where the invariants of \(\truq\) remain constant within a time-step, but not from time-step to time-step. 

The first property of the Galerkin curve that we will examine is its rate of convergence to the true flow of the Euler-Lagrange vector field. There are two general sources of error that affect the convergence of these curves, the first being the accuracy to which the curves approximate the local solution to the Euler-Lagrange equations over the interval \(\left[0,h\right]\) with the boundary \(\lefri{q_{k},q_{k+1}}\), and the second being the accuracy of the boundary conditions \(\lefri{q_{k},q_{k+1}}\) as approximations to a true sampling of the exact flow. Numerical experiments will show that often the second source of error dominates the first, causing the Galerkin curves to converge at the same rate as the one-step map. However, the accuracy to which the Galerkin curves approximate the true minimizers independent of the error of the boundary can also be established under appropriate assumptions about the action. Two theorems which establish this convergence are presented below.

Before we state the theorems, we quickly recall the definitions of the \emph{Sobolev Norm} \(\SobNorm{\cdot}{p}\),%
\begin{align*}
\SobNorm{f}{p} = \lefri{\LNorm{f}{p}^{p} + \LNorm{\dot{f}}{p}^{p}}^{\frac{1}{p}} = \lefri{\int_{0}^{h}\left|f\right|^{p}\dt + \int_{0}^{h}\left|\dot{f}\right|^{p}\dt}^{\frac{1}{p}}.
\end{align*}%
We will make extensive use of this norm when examining convergence of Galerkin curves.

\begin{theorem}\emph{(Geometric Convergence of Galerkin Curves with \(n\)-Refinement)}\label{GeoGalerk} Under the same assumptions as Theorem \ref{SpecConv}, if at \(\truq\), the action is twice Frechet differentiable, and if the second Frechet derivative of the action \(\mbox{D}^{2}\mathfrak{S}\lefri{\cdot}\left[\cdot,\cdot\right]\) is coercive in a neighborhood \(U\) of \(\truq\), that is,%
\begin{align*}
D^{2}\mathfrak{S}\lefri{\nu}\left[\delta q, \delta q\right] \geq \CoerC\SobNorm{\delta q}{1}^{2},
\end{align*}
for all curves \(\delta q \in \HoQ \) and all \(\nu \in U\), then the curves which minimize the discrete action converge to the true solution geometrically with \(n\)-refinement with respect to \(\SobNorm{\cdot}{1}\). Specifically, if the discrete Hamiltonian flow map has error \(\mathcal{O}\lefri{\SpecK^{n}}\), \(\SpecK < 1\), then the Galerkin curves have error \(\mathcal{O}\lefri{{\sqrt{\SpecK}}^{n}}\).
\end{theorem}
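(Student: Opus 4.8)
The plan is to exploit coercivity of the second Frechet derivative of the action to convert the known $\mathcal{O}(\SpecK^n)$ error in the \emph{endpoint values} (from Theorem \ref{SpecConv}, via the discrete Hamiltonian flow map) into a $\SobNorm{\cdot}{1}$ error bound on the Galerkin curve. First I would set up the comparison: let $\truq$ denote the local exact solution on $[0,h]$ with boundary values $(q_k, \truq(h))$, and let $\galqn$ be the Galerkin curve with boundary values $(q_k, q_{k+1})$, where $q_{k+1}$ is the output of the discrete Hamiltonian flow map. By Theorem \ref{SpecConv} we know $|q_{k+1} - \truq(h)| = \mathcal{O}(\SpecK^n)$. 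The difficulty is that $\truq$ and $\galqn$ solve variational problems over \emph{different} admissible classes with \emph{different} boundary conditions, so I cannot directly subtract them. I would introduce an intermediate object: let $\optqn \in \FdFSpace$ be the best-approximation curve to $\truq$ from assumption (1) of Theorem \ref{SpecConv}, which satisfies $\SobNorm{\optqn - \truq}{1} \le \ApproxC \ApproxK^n$ (pointwise bound integrates to a Sobolev bound on $[0,h]$), but which in general does not match the boundary data $(q_k, q_{k+1})$. I would correct its boundary values by adding a low-degree (e.g.\ linear, or lowest two basis functions) polynomial $r_n(t)$ interpolating the boundary discrepancies $q_k - \optqn(0)$ and $q_{k+1} - \optqn(h)$; since both discrepancies are $\mathcal{O}(\SpecK^n)$ (the first from best approximation, the second from combining best approximation with Theorem \ref{SpecConv}), $\SobNorm{r_n}{1} = \mathcal{O}(\max(\ApproxK,\SpecK)^n)$, and $\tilde{q}_n^\star := \optqn + r_n$ lies in $\FdFSpace$ with the correct boundary data $(q_k,q_{k+1})$ and $\SobNorm{\tilde{q}_n^\star - \truq}{1} = \mathcal{O}(\tilde{K}^n)$ for some $\tilde{K}<1$.

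Next I would run the standard coercivity/energy argument. Write $\delta q := \galqn - \tilde{q}_n^\star \in \HoQ$ (it vanishes at both endpoints by construction). Using the second-order Taylor expansion of the action $\mathfrak{S}$ about $\truq$, and the fact that $\truq$ is a critical point so $D\mathfrak{S}(\truq)[\cdot] = 0$, I would write
\begin{align*}
\mathfrak{S}(\galqn) - \mathfrak{S}(\tilde{q}_n^\star) = D\mathfrak{S}(\truq)[\galqn - \tilde{q}_n^\star] + \tfrac12 D^2\mathfrak{S}(\nu_1)[\galqn - \truq, \galqn - \truq] - \tfrac12 D^2\mathfrak{S}(\nu_2)[\tilde{q}_n^\star - \truq, \tilde{q}_n^\star - \truq]
\end{align*}
for suitable $\nu_1,\nu_2 \in U$ on the respective segments. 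The key point is that $\galqn$ minimizes the \emph{discrete} (quadrature) action over curves with boundary data $(q_k,q_{k+1})$, and $\tilde{q}_n^\star$ is a competitor in that same class; combined with the quadrature error bound (assumption 3 of Theorem \ref{SpecConv}, $\mathcal{O}(\QuadK^n)$ on $\FdFSpace$), this forces $\mathfrak{S}(\galqn) \le \mathfrak{S}(\tilde{q}_n^\star) + \mathcal{O}(\QuadK^n)$ — i.e.\ the left side above is bounded above by $\mathcal{O}(\QuadK^n)$. Meanwhile coercivity gives a lower bound: $D^2\mathfrak{S}(\nu_1)[\galqn - \truq, \galqn - \truq] \ge \CoerC \SobNorm{\galqn - \truq}{1}^2$, and the $\tilde{q}_n^\star - \truq$ term on the right is $\mathcal{O}(\tilde{K}^{2n})$, hence absorbed. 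Rearranging,
\begin{align*}
\tfrac{\CoerC}{2}\SobNorm{\galqn - \truq}{1}^2 \le \mathcal{O}(\QuadK^n) + \mathcal{O}(\tilde{K}^{2n}) = \mathcal{O}(\hat{K}^n)
\end{align*}
for $\hat{K} := \max(\QuadK, \tilde{K}^2, \ldots) < 1$, so $\SobNorm{\galqn - \truq}{1} = \mathcal{O}(\sqrt{\hat{K}}^{\,n})$, which is the claimed $\mathcal{O}(\sqrt{\SpecK}^{\,n})$ rate after possibly enlarging the base constant (all of $\ApproxK,\QuadK,\SpecK$ are comparable in the hypotheses of Theorem \ref{SpecConv}, and $\SpecK$ dominates in the conclusion).

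The main obstacle is the middle step — legitimately comparing $\galqn$ with $\tilde{q}_n^\star$ as competitors for the same minimization problem, which requires (a) carefully matching boundary data via the correction $r_n$ without spoiling the exponential rate, and (b) controlling the mismatch between the \emph{discrete} action that $\galqn$ actually minimizes and the \emph{continuous} action $\mathfrak{S}$ that the coercivity hypothesis concerns. Point (b) is where assumption (3) of Theorem \ref{SpecConv} (the geometric quadrature error estimate, uniform over $\FdFSpace$) is essential: it lets me pass between $h\sum_j \bnj L(\galqn(\cnjh),\dgalqn(\cnjh))$ and $\int_0^h L(\galqn,\dgalqn)\dt$ at the cost of $\mathcal{O}(\QuadK^n)$, which is harmless. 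A secondary subtlety is ensuring the Taylor remainder second derivatives are evaluated at points $\nu_1,\nu_2$ still inside the coercivity neighborhood $U$ — this holds for $n$ large since $\galqn$ and $\tilde{q}_n^\star$ both converge to $\truq$, so one restricts to $n \ge N_0$ and absorbs finitely many initial terms into the constant. I also need twice Frechet differentiability of $\mathfrak{S}$ at $\truq$ (hypothesis) plus enough regularity near $\truq$ for the second-order remainder form, which follows from the smoothness of $L$ on $U$ already assumed in Theorem \ref{SpecConv}.
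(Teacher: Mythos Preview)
Your proposal rests on a misreading of what \(\truq\) is. In the paper's setup (see the paragraph immediately preceding this theorem), \(\truq\) is the local solution of the Euler--Lagrange equations with boundary data \(\truq(0)=q_{k}\) and \(\truq(h)=q_{k+1}\) --- the \emph{same} endpoints as the Galerkin curve \(\galqn\). There is no boundary mismatch to repair, so the entire apparatus of the correction \(r_{n}\) and the intermediate competitor \(\tilde{q}_{n}^{\star}\) is unnecessary. Worse, under your interpretation (where \(\truq(h)\neq q_{k+1}\)) your coercivity step is actually invalid: you invoke \(D^{2}\mathfrak{S}(\nu_{1})[\galqn-\truq,\galqn-\truq]\geq \CoerC\SobNorm{\galqn-\truq}{1}^{2}\), but the coercivity hypothesis is stated only for \(\delta q\in\HoQ\), and in your setup \(\galqn-\truq\) does not vanish at \(t=h\).

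Once the boundary data are seen to agree, the paper's argument is much shorter than yours. Since \(\truq\) and \(\galqn\) share endpoints, \(\EDL{q_{k}}{q_{k+1}}=\mathfrak{S}(\truq)\), and \(\SDLN{q_{k}}{q_{k+1}}{n}\) differs from \(\mathfrak{S}(\galqn)\) by at most the quadrature error \(\QuadC\QuadK^{n}\). The bound \(\left|\EDLn-\SDLn\right|\leq\SpecC\SpecK^{n}\) already obtained in Theorem~\ref{SpecConv} therefore gives \(\left|\mathfrak{S}(\galqn)-\mathfrak{S}(\truq)\right|\leq(\SpecC+\QuadC)\SpecK^{n}\) directly. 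Now Taylor-expand \(\mathfrak{S}(\galqn)\) about \(\truq\): the first-order term \(D\mathfrak{S}(\truq)[\galqn-\truq]\) vanishes because \(\galqn-\truq\in\HoQ\) and \(\truq\) solves the Euler--Lagrange equations, and coercivity of the second-order remainder yields \(\CoerC\SobNorm{\galqn-\truq}{1}^{2}\leq(\SpecC+\QuadC)\SpecK^{n}\). Taking a square root finishes. No competitor comparison, no boundary correction, no separate appearance of \(\optqn\) is needed here --- that work was already absorbed into the proof of Theorem~\ref{SpecConv}, and the present theorem simply post-processes its conclusion.
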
%
\begin{proof}
We start with the bound (\ref{GeoBound}) given at the end of Theorem \ref{SpecConv},%
\begin{align*}
\left|\EDL{q_{k}}{q_{k+1}} - \SDLN{q_{k}}{q_{k+1}}{n}\right| \leq \SpecC\SpecK^{n}
\end{align*}
and expand using the definitions of \(\EDL{q_{k}}{q_{k+1}}\) and \(\SDLN{q_{k}}{q_{k+1}}{n}\), as well as the assumed accuracy of the quadrature rule \(\mathcal{G}_{n}\) to derive%
\begin{align}
\SpecC\SpecK^{n} &\geq \left|\EDL{q_{k}}{q_{k+1}} - \SDLN{q_{k}}{q_{k+1}}{n}\right| \label{ReconIneq1} \\
&= \left|\int_{0}^{h}L\lefri{\truq,\dtruq}\dt - h\sum_{j=1}^{m_{n}} \bnj L\lefri{\galqn\lefri{\cnjh},\galqn\lefri{\cnjh}}\right| \nonumber \\
&\geq \left|\int_{0}^{h}L\lefri{\truq,\dtruq}\dt - \int_{0}^{h} L\lefri{\galqn,\galqn}\dt\right| - \QuadC\QuadK^{n} \label{ReconIneq2}\\
& = \left| \mathfrak{S}\lefri{\galqn} - \mathfrak{S}\lefri{\truq} \right| - \QuadC \QuadK^{n}\nonumber
\end{align}
which implies:%
\begin{align*}
\lefri{\SpecC + \QuadC}\SpecK^{n} \geq& \left| \mathfrak{S}\lefri{\galqn} - \mathfrak{S}\lefri{\truq} \right|
\end{align*}%
because \(\SpecK \geq \QuadK\), (see the proof of Theorem \ref{SpecConv} in the appendix). Using this inequality, we make use of a Taylor expansion of \(\mathfrak{S}\lefri{\galqn}\),%
\begin{align*}
\mathfrak{S}\lefri{\galqn} = \mathfrak{S}\lefri{\truq}  + \mbox{D}\mathfrak{S}\lefri{\truq}\left[\galqn - \truq\right] + \frac{1}{2}\mbox{D}^{2}\mathfrak{S}\lefri{\nu}\left[\galqn - \truq,\galqn - \truq\right],
\end{align*}%
for some \(\nu \in U\), to see that
\begin{align*}
\lefri{\SpecC + \QuadC}\SpecK^{n} &\geq \left| \mathfrak{S}\lefri{\galqn} - \mathfrak{S}\lefri{\truq} \right| \\
&= \left|\mathfrak{S}\lefri{\truq} + \mbox{D}\mathfrak{S}\lefri{\truq}\left[\galqn - \truq\right] + \frac{1}{2}\mbox{D}^{2}\mathfrak{S}\lefri{\truq}\left[\galqn - \truq, \galqn - \truq \right]  - \mathfrak{S}\lefri{\truq}\right|.
\end{align*}%
But%
\begin{align*}
D\mathfrak{S}\lefri{\truq}\left[\galqn -\truq\right] &= \int_{0}^{h}\dLdq\lefri{\truq,\dtruq}\lefri{\galqn - \truq} + \dLddq\lefri{\truq,\dtruq}\lefri{\dgalqn - \dtruq} \dt \\
&= \int_{0}^{h} \lefri{\dLdq\lefri{\truq,\dtruq} - \frac{\mbox{d}}{\dt} \dLddq\lefri{\truq,\dtruq}} \cdot \lefri{\galqn - \truq} \dt\\
&= 0,
\end{align*}%
because \(\galqn\lefri{0} = \truq\lefri{0}\) and \(\galqn\lefri{h} = \truq\lefri{h}\) by definition (note that this implies \(\lefri{\galqn - \truq} \in H^{1}_{0}\lefri{\left[0,h\right],Q}\)). Then
\begin{align*}
\lefri{\SpecC + \QuadC}\SpecK^{n} &\geq \left|\mbox{D}^{2}\mathfrak{S}\lefri{\nu}\left[\galqn - \truq,\galqn - \truq \right] \right| \\
&\geq  \CoerC \SobNorm{\galqn - \truq}{1}^{2} \\
C\sqrt{\SpecK}^{n} &\geq \SobNorm{\galqn - \truq}{1}^{2}
\end{align*}%
where \(C = \frac{\SpecC + \QuadC}{\CoerC}\).\end{proof}%
This result shows that Galerkin curves converge to the true solution geometrically with \(n\)-refinement, albeit with a larger geometric constant, and hence a slower rate. By simply replacing the bounds (\ref{ReconIneq1}) and (\ref{ReconIneq2}) from Theorem \ref{SpecConv} with those from Theorem \ref{OptConv} and the term \(\SpecC\SpecK^{n}\) with \(\OptC h^{p}\), an identical argument shows that Galerkin curves converge at half the optimal rate with \(h\)-refinement.%
\begin{theorem}\emph{(Convergence of Galerkin Curves with \(h\)-Refinement)}\label{OptGalerk} Under the same assumptions as Theorem \ref{OptConv}, if at \(\truq\), the action is twice Frechet differentiable, and if the second Frechet derivative of the action \(\mbox{D}^{2}\mathcal{S}\lefri{\cdot}\left[\cdot,\cdot\right]\) is coercive with a constant \(\CoerC\) independent of \(h\) in a neighborhood \(U\) of \(\truq\), for all curves \(\delta q \in H^{1}_{0}\lefri{\left[0,h\right],Q}\), then if the discrete Lagrange map has error \(\mathcal{O}\lefri{h^{p+1}}\), the Galerkin curves have error at most \(\mathcal{O}\lefri{h^{\frac{p+1}{2}}}\) in \(\SobNorm{\cdot}{1}\). If \(\CoerC\) is a function of \(h\), this bound becomes \(\mathcal{O}\lefri{\CoerC\lefri{h}^{-1}h^{\frac{p+1}{2}}}\).
\end{theorem}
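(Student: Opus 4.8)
The plan is to run the argument of Theorem \ref{GeoGalerk} almost verbatim, with the geometric estimate $\SpecC\SpecK^{n}$ replaced everywhere by the polynomial estimate $\OptC h^{p+1}$ supplied by Theorem \ref{OptConv} (the paper already indicates this reduction in the remark preceding the statement). First I would start from the conclusion of Theorem \ref{OptConv},
\begin{align*}
\left|\EDLh{q_{k}}{q_{k+1}}{h} - \GDLh{q_{k}}{q_{k+1}}{h}\right| \leq \OptC h^{p+1},
\end{align*}
and unpack both sides: the exact discrete Lagrangian is $\EDLh{q_{k}}{q_{k+1}}{h} = \int_{0}^{h}L\lefri{\truq,\dtruq}\dt = \mathcal{S}\lefri{\truq}$, while the Galerkin one is $\GDLh{q_{k}}{q_{k+1}}{h} = h\sum_{j}b_{j}L\lefri{\galqn,\dgalqn}$. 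Applying the quadrature accuracy hypothesis (assumption~3 of Theorem \ref{OptConv}) to the polynomial curve $\galqn \in \FdFSpace$ to replace the quadrature sum by $\int_{0}^{h}L\lefri{\galqn,\dgalqn}\dt = \mathcal{S}\lefri{\galqn}$ up to an error $\QuadC h^{p+1}$, and then using the triangle inequality, I obtain
\begin{align*}
\left|\mathcal{S}\lefri{\galqn} - \mathcal{S}\lefri{\truq}\right| \leq \lefri{\OptC + \QuadC}h^{p+1}.
\end{align*}

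Next I would Taylor-expand the continuous action about the exact solution,
\begin{align*}
\mathcal{S}\lefri{\galqn} = \mathcal{S}\lefri{\truq} + \mbox{D}\mathcal{S}\lefri{\truq}\left[\galqn - \truq\right] + \tfrac{1}{2}\mbox{D}^{2}\mathcal{S}\lefri{\nu}\left[\galqn - \truq, \galqn - \truq\right],
\end{align*}
for some $\nu$ on the segment joining $\truq$ and $\galqn$. The linear term vanishes because $\truq$ satisfies the Euler--Lagrange equations and $\galqn - \truq \in H^{1}_{0}([0,h],Q)$ (the two curves share the endpoints $q_{k}, q_{k+1}$), exactly as in the proof of Theorem \ref{GeoGalerk}. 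Hence $\left|\mathcal{S}\lefri{\galqn} - \mathcal{S}\lefri{\truq}\right| = \tfrac{1}{2}\mbox{D}^{2}\mathcal{S}\lefri{\nu}\left[\galqn - \truq, \galqn - \truq\right]$, and combining with the coercivity hypothesis,
\begin{align*}
\lefri{\OptC + \QuadC}h^{p+1} \geq \left|\mathcal{S}\lefri{\galqn} - \mathcal{S}\lefri{\truq}\right| \geq \tfrac{\CoerC}{2}\SobNorm{\galqn - \truq}{1}^{2},
\end{align*}
so that $\SobNorm{\galqn - \truq}{1} \leq \sqrt{2\lefri{\OptC + \QuadC}/\CoerC}\, h^{\frac{p+1}{2}} = \mathcal{O}\lefri{h^{\frac{p+1}{2}}}$ when $\CoerC$ is independent of $h$. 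When $\CoerC = \CoerC\lefri{h}$ the coercivity constant is simply carried through the final division, which (conservatively, using $\CoerC\lefri{h}^{-1/2} \leq \CoerC\lefri{h}^{-1}$ in the relevant small-coercivity regime) gives the stated $\mathcal{O}\lefri{\CoerC\lefri{h}^{-1}h^{\frac{p+1}{2}}}$ bound.

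The steps of unpacking the discrete Lagrangians and bounding the Taylor remainder are routine and identical to Theorem \ref{GeoGalerk}, so I would only sketch them. The point that needs genuine care in the $h$-refinement setting is that, unlike in $n$-refinement, every object here --- the approximation space $\FdFSpace$, the quadrature rule, the action $\mathcal{S}$, and the coercivity neighborhood $U$ --- lives on the shrinking interval $[0,h]$. I must therefore check that the constants $\OptC,\QuadC$ produced by Theorem \ref{OptConv} are genuinely $h$-independent (they are, by hypotheses~1--3 of that theorem) and, more delicately, that $\galqn$ lies in the neighborhood $U$ on which $\mbox{D}^{2}\mathcal{S}$ is coercive. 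This last requirement is the main obstacle: it needs a preliminary estimate placing $\galqn$ close to $\truq$, which holds once $h$ is sufficiently small since on a shrinking interval both curves are small perturbations of the affine interpolant of the common endpoints $q_{k},q_{k+1}$ (the same gap is present, and handled the same way, in the proof of Theorem \ref{GeoGalerk}). Once $\galqn \in U$ is secured, the short computation above closes the argument.
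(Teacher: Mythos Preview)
Your proposal is correct and follows exactly the approach the paper prescribes: the paper does not give an independent proof of this theorem but merely states that ``by simply replacing the bounds (\ref{ReconIneq1}) and (\ref{ReconIneq2}) from Theorem \ref{SpecConv} with those from Theorem \ref{OptConv} and the term \(\SpecC\SpecK^{n}\) with \(\OptC h^{p}\), an identical argument shows that Galerkin curves converge at half the optimal rate with \(h\)-refinement,'' which is precisely what you carry out. Your added discussion of why \(\galqn\) lies in the coercivity neighborhood \(U\) and of the \(\CoerC(h)^{-1/2}\) versus \(\CoerC(h)^{-1}\) discrepancy goes slightly beyond what the paper makes explicit, but is consistent with its treatment.
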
%
Like the requirement that the stationary points of the actions are minimizers, the requirement that the second Frechet derivative of the action is coercive may appear quite strong at first. Again, the coercivity will depend on the properties of the Lagrangian \(L\), but we can establish that for Lagrangians of the canonical form,%
\begin{align*}
L\lefri{q,\dot{q}} = \frac{1}{2}\dot{q}^{T}M\dot{q} - V\lefri{q}, 
\end{align*}%
there exists a time step \(\left[0,h\right]\) over which the action is coercive on \(\HoQ\).
%
%
\begin{theorem}\emph{(Coercivity of the Action)} For Lagrangian of the form%
\begin{align*}
L\lefri{q,\dot{q}} &= \frac{1}{2}\dot{q}^{T}M\dot{q} - V\lefri{q},
\end{align*}%
where \(M\) is symmetric positive-definite, and the second derivatives of \(V\lefri{q}\) are bounded, there exists an interval \(\left[0,h\right]\) over which the action is coercive over \(\HoQ\), that is,%
\begin{align*}
\mbox{D}^{2}\mathfrak{S}\lefri{\nu}\left[\delta q, \delta q\right] \geq \CoerC \SobNorm{\delta q}{1}^{2},
\end{align*}%
for any \(\delta q \in \HoQ\) and any \(\nu \in \CQ\).
\end{theorem}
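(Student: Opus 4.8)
The plan is to compute the second Fréchet derivative of the action explicitly, bound its kinetic part below using the positive-definiteness of \(M\), bound its potential part above using the boundedness of the second derivatives of \(V\), and then invoke the Poincar\'e inequality on \(\HoQ\) to absorb the (indefinite) potential contribution into the kinetic one, at the price of shrinking \(h\). This is a G\aa rding-type argument, essentially the continuous analogue of the estimate already carried out in the minimization-of-the-action theorem.

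First I would record that for \(\mathfrak{S}(q) = \int_{0}^{h}\tfrac12\dot q^{T}M\dot q - V(q)\,\dt\) a direct computation gives
\begin{align*}
D\mathfrak{S}(q)[\delta q] = \int_{0}^{h}\dot q^{T}M\delta\dot q - \nabla V(q)\cdot\delta q\,\dt,
\end{align*}
and hence, since \(L\) is quadratic in the velocity,
\begin{align*}
\mbox{D}^{2}\mathfrak{S}(\nu)[\delta q,\delta q] = \int_{0}^{h}\delta\dot q^{T}M\delta\dot q\,\dt - \int_{0}^{h}\delta q^{T}\nabla^{2}V(\nu)\,\delta q\,\dt,
\end{align*}
where only the potential term depends on the base curve \(\nu\). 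Because \(M\) is symmetric positive-definite there is \(m>0\) with \(x^{T}Mx\ge m\,x^{T}x\) for all \(x\), so the first integral is at least \(m\int_{0}^{h}|\delta\dot q|^{2}\,\dt\). For the second integral, the estimate \eqref{HBound} applies verbatim: writing \(C_{R}\) for a bound on the second partials of \(V\), standard matrix inequalities give \(\bigl|\delta q^{T}\nabla^{2}V(\nu)\delta q\bigr|\le D C_{R}\,\delta q^{T}\delta q\) with \(D=\dim Q\), and this bound is uniform over all \(\nu\in\CQ\) precisely because the second derivatives of \(V\) are assumed globally bounded. Therefore
\begin{align*}
\mbox{D}^{2}\mathfrak{S}(\nu)[\delta q,\delta q] \ge m\int_{0}^{h}|\delta\dot q|^{2}\,\dt - D C_{R}\int_{0}^{h}|\delta q|^{2}\,\dt.
\end{align*}

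Next I would split the kinetic term in half and apply the Poincar\'e inequality \(\int_{0}^{h}|\delta q|^{2}\,\dt \le \frac{h^{2}}{\pi^{2}}\int_{0}^{h}|\delta\dot q|^{2}\,\dt\) — valid because \(\delta q\in\HoQ\) vanishes at both endpoints of \(\left[0,h\right]\) — to one of the two halves, obtaining
\begin{align*}
\mbox{D}^{2}\mathfrak{S}(\nu)[\delta q,\delta q] \ge \frac{m}{2}\int_{0}^{h}|\delta\dot q|^{2}\,\dt + \lefri{\frac{m\pi^{2}}{2h^{2}} - D C_{R}}\int_{0}^{h}|\delta q|^{2}\,\dt.
\end{align*}
Choosing \(h\) small enough that \(\frac{m\pi^{2}}{2h^{2}}-D C_{R}\ge\frac{m}{2}\), i.e. \(h\le\pi\sqrt{m/(m+2D C_{R})}\), yields
\begin{align*}
\mbox{D}^{2}\mathfrak{S}(\nu)[\delta q,\delta q] \ge \frac{m}{2}\lefri{\LNorm{\delta q}{2}^{2} + \LNorm{\delta\dot q}{2}^{2}},
\end{align*}
which is coercivity in the first-order \(L^{2}\)-Sobolev norm; since on a bounded interval \(\SobNorm{\delta q}{1}^{2}\le 2h\bigl(\LNorm{\delta q}{2}^{2}+\LNorm{\delta\dot q}{2}^{2}\bigr)\) by Cauchy--Schwarz, the bound also holds in the form \(\mbox{D}^{2}\mathfrak{S}(\nu)[\delta q,\delta q]\ge\CoerC\SobNorm{\delta q}{1}^{2}\) with \(\CoerC=\tfrac{m}{4h}\), as stated.

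There is no serious obstacle here: the argument is a routine lower-bound estimate. The only points that genuinely require attention are (i) that the control of the potential term must be uniform over all base curves \(\nu\), which is exactly why global — rather than merely local — boundedness of \(\nabla^{2}V\) is hypothesized, so that no restriction to a neighborhood of \(\truq\) enters; and (ii) the use of the sharp Poincar\'e constant \(h/\pi\) for functions vanishing at both endpoints, which makes the admissible range of \(h\) explicit and consistent with the threshold \(\sqrt{m\pi^{2}/(DC_{R})}\) appearing in the companion minimization theorem.
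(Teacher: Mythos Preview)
Your proof is correct and follows essentially the same route as the paper: compute \(D^{2}\mathfrak{S}(\nu)[\delta q,\delta q]\), bound the kinetic term below by \(m\), bound the Hessian term via \eqref{HBound}, split the kinetic term in half and absorb the potential contribution using the Poincar\'e inequality on \(\HoQ\), then pass from the \(L^{2}\)-Sobolev control to the \(W^{1,1}\) norm by Cauchy--Schwarz. The only cosmetic difference is that you fix \(h\) small enough to make both coefficients equal to \(m/2\), whereas the paper carries the minimum \(\min\bigl(\tfrac{m}{2},\tfrac{m\pi^{2}}{2h^{2}}-DC_{R}\bigr)\) through to the final constant; your \(L^{2}\to L^{1}\) step is also handled a bit more carefully than the paper's.
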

%
%
\begin{proof}
First, we note that if%
\begin{align*}
\mathfrak{S}\lefri{\nu} &= \int_{0}^{h} \frac{1}{2}\dot{\nu}^{T}M\dot{\nu} - V\lefri{\nu},
\end{align*}%
then%
\begin{align*}
\mbox{D}^{2}\mathfrak{S}\lefri{\nu}\left[\delta q, \delta q\right] &= \int_{0}^{h}  \delta\dot{q}^{T}M\delta\dot{q} - \delta q^{T}H\lefri{\nu}\delta q \dt\\
&= \int_{0}^{h} \delta \dot{q}^{T}M\delta\dot{q} \dt - \int_{0}^{h} \delta q^{T}H\lefri{\nu} \delta q \dt
\end{align*}%
where \(H\lefri{\nu}\) is the Hessian of \(V\lefri{\nu}\) at the point \(\nu\). Since \(M\) is symmetric positive-definite, and the second derivatives of \(V\lefri{\cdot}\) are bounded, then there exists \(C_{r}\) and \(m\) such that:%
\begin{align}
\int_{0}^{h} \delta \dot{q}^{T}M\delta \dot{q} \dt &\geq \int_{0}^{h}m\delta\dot{q}^{T}\delta\dot{q} \dt \nonumber \\
\int_{0}^{h} \delta q^{T}H\lefri{\nu}\delta q \dt &\leq \int_{0}^{h}DC_{r}\delta q^{T} \delta q \dt, \label{SecondCoerIneq}
\end{align}%
(see (\ref{HBound}) for a derivation of (\ref{SecondCoerIneq})). Hence,%
\begin{align}
\mbox{D}^{2}\mathfrak{S}\lefri{\nu}\left[\delta q, \delta q\right] &\geq \int_{0}^{h} m\delta\dot{q}^{T}\delta\dot{q} \dt - \int_{0}^{h} DC_{r}f^{T}f \dt \nonumber \\
&= \frac{1}{2}m\int_{0}^{h} \delta\dot{q}^{T}\delta\dot{q} \dt + \frac{1}{2}m \int_{0}^{h} \delta\dot{q}^{T}\delta\dot{q} \dt - DC_{r}\int_{0}^{h} \delta q^{T}\delta q\dt. \label{CoerTwoTerms}
\end{align}%
Considering the last two terms in (\ref{CoerTwoTerms}), and noting that \(\delta q \in \HoQ\), we make use of the Poincar\'{e} inequality to derive: %
\begin{align}
\frac{1}{2}m \int_{0}^{h} \delta\dot{q}^{T}\delta\dot{q} \dt - DC_{r}\int_{0}^{h} \delta q^{T}\delta q \dt &\geq \frac{m \pi^{2}}{2h^{2}} \int_{0}^{h} \delta q^{T}\delta q \dt - nC_{r}\int_{0}^{h}\delta q^{T}\delta q \dt \nonumber \\
&\geq \lefri{\frac{m \pi^{2}}{2h^{2}} - DC_{r}} \int_{0}^{h}\delta q^{T}\delta q\dt. \label{CoerPCI}
\end{align}%
Thus, substituting (\ref{CoerPCI}) in for the last two terms of (\ref{CoerTwoTerms}), we conclude:%
\begin{align*}
 \mbox{D}^{2}\mathfrak{S}\lefri{q,\dot{q}}\left[\delta q,\delta q\right] &\geq \lefri{\frac{m \pi^{2}}{2h^{2}} - DC_{r}} \int_{0}^{h} \delta q^{T}\delta q\dt + \frac{m}{2} \int_{0}^{h} \delta\dot{q}^{T}\delta\dot{q} \dt \\
&\geq \min\lefri{\frac{m}{2},\lefri{\frac{m\pi^{2}}{2h^{2}} - DC_{r}}}\lefri{\int_{0}^{h}\delta q^{T}\delta q\dt + \int_{0}^{h}\delta\dot{q}^{T}\delta\dot{q} \dt}\\
&= \min\lefri{\frac{m}{2},\lefri{\frac{m\pi^{2}}{2h^{2}} - DC_{r}}}\lefri{\LNorm{\delta q}{2}^{2} + \LNorm{\delta \dot{q}}{2}^{2}},
\end{align*}%
and making use of H\"{o}lder's inequality, we see that \(\LNorm{\delta q}{2} \geq h^{\frac{1}{2}}\LNorm{\delta q}{1}\), thus%
\begin{align*}
\mbox{D}^{2}\mathfrak{S}\lefri{q,\dot{q}}\left[\delta q,\delta q\right]  &\geq \min\lefri{\frac{m}{2},\lefri{\frac{m\pi^{2}}{2h^{2}} - DC_{r}}}\lefri{h\LNorm{\delta q}{1}^{2} + h\LNorm{\delta \dot{q}}{1}^{2}}\\
&\geq \min\lefri{\frac{mh}{2},\lefri{\frac{m\pi^{2}}{2h} - hDC_{r}}}\frac{1}{2}\lefri{\LNorm{\delta q}{1} + \LNorm{\delta \dot{q}}{1}}^{2}\\
&= \min\lefri{\frac{mh}{4},\lefri{\frac{m\pi^{2}}{4h} - hDC_{r}}}\SobNorm{\delta q}{1}^{2}
\end{align*}%
which establishes the coercivity result.
\end{proof}%
\subsubsection{Noether Quantities}%

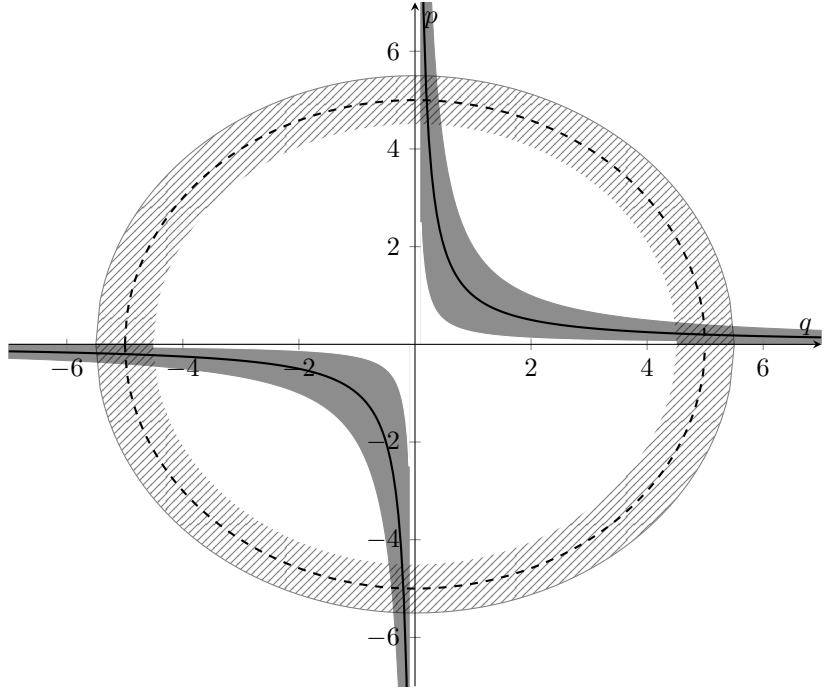
\begin{figure}[t]
  \centering
  \begin{tikzpicture}[scale=1.0, domain=-4:4]
      \begin{axis}[width = 0.75\textwidth, ymin=-7, ymax=7, xmin=-7, xmax=7, xlabel=\(q\), ylabel=\(p\), axis y line = center, axis x line = middle, axis on top] 

        \addplot[gray,fill,opacity=1.0,pattern=north east lines,pattern color = gray][domain=-5.5:5.5,samples=150]{-sqrt(30.25-x*x)}\closedcycle;        
        \addplot[gray,fill,opacity=1.0,pattern=north east lines,pattern color = gray][domain=-5.5:5.5,samples=150]{sqrt(30.25-x*x)}\closedcycle;        
        \addplot[white,fill,opacity=1.0][domain=-4.5:4.5,samples=150]{-sqrt(20.25-x*x)}\closedcycle;        
        \addplot[white,fill,opacity=1.0][domain=-4.5:4.5,samples=150]{sqrt(20.25-x*x)}\closedcycle;        

        \addplot[black,fill,opacity=0.45][domain=-7:-0.1,samples=150]{2.0/x}\closedcycle;
        \addplot[black,fill,opacity=0.45][domain=0.1:7,samples=150]{2.0/x}\closedcycle;
        \addplot[white,fill,opacity=1.0][domain=-4.5:-0.1,samples=150]{0.25/x}\closedcycle;
        \addplot[white,fill,opacity=1.0][domain=0.1:4.5,samples=150]{0.25/x}\closedcycle;

        \addplot[black, thick][domain=-7:-0.1,samples=150]{1/x};
        \addplot[black, thick][domain=0.1:7,samples=150]{1/x};
        \addplot[black, thick, style = dashed][domain=-5:5,samples=300]{sqrt(25-x*x)};
        \addplot[black, thick, style = dashed][domain=-5:5,samples=300]{-sqrt(25-x*x)};

      \end{axis}
  \end{tikzpicture}
  \caption{Conserved and approximately conserved Noether quantities and the resulting constrained solution space. Suppose that both \(p^{T}q = 1\) and \(p^{2} + q^{2} = 5\) were conserved quantities for a certain Lagrangian. Then the solutions of the Euler-Lagrange equations would be constrained to the intersections of these two constant surfaces in phase space; in the above diagram, this is the intersection of the dashed and solid lines. If these quantities were conserved up to a fixed error along a numerical solution, then the numerical solution would be constrained to the intersection of the shaded regions in the above figure. The constraint of the numerical solution to these regions is what leads to the many excellent qualities of variational integrators.}
\end{figure}

One of the great advantages of using variational integrators for problems in geometric mechanics is that by construction they have a rich geometric structure which helps lead to excellent long term and qualitative behavior. An important geometric feature of variational integrators is the preservation of discrete Noether quantities, which are invariants that are derived from symmetries of the action. These are analogous to the more familiar Noether quantities of geometric mechanics in the continuous case. We quickly recall Noether's theorem in both the discrete and continuous case, which will also help define the notation used throughout the proofs that follow. The proofs of both these theorems can be found in \citet{HaLuWa2006}.%
\begin{theorem}\emph{(Noether's Theorem)} Consider a system with Hamiltonian \(H\lefri{p,q}\) and Lagrangian \(L\lefri{q,\dot{q}}\). Suppose \(\left\{g_{s}:s\in\mathbb{R}\right\}\) is a one-parameter group of transformations which leaves the Lagrangian invariant. Let
\begin{align*}
 a\lefri{q} = \left.\frac{d}{d\mbox{s}}\right|_{s=0}g_{s}\lefri{q}
\end{align*}%
be defined as the vector field with flow \(g_{s}\lefri{q}\), referred to as the infinitesimal generator, and define the canonical momentum
\begin{align*}
p = \dLddq\lefri{q,\dot{q}}.
\end{align*}%
Then
\begin{align*}
I\lefri{p,q} = p^{T}a\lefri{q}
\end{align*}%
is a first integral of the Hamiltonian system.
\end{theorem}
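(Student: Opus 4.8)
The plan is to differentiate the invariance identity with respect to the group parameter $s$ and then use the Euler--Lagrange equations to recognize the outcome as an exact time derivative.

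First I would make the invariance hypothesis explicit: for every $s$ and every smooth curve $q(t)$ one has $L\lefri{g_s\lefri{q},\frac{d}{dt}g_s\lefri{q}} = L\lefri{q,\dot q}$. The right-hand side is independent of $s$, so differentiating at $s=0$ and applying the chain rule gives $\dLdq \cdot a\lefri{q} + \dLddq \cdot \frac{d}{ds}\big|_{s=0}\frac{d}{dt}g_s\lefri{q} = 0$. Interchanging the $s$- and $t$-derivatives, which is legitimate by smoothness of the flow $g_s$, turns the second factor into $\frac{d}{dt}a\lefri{q(t)}$, so the identity reads $\dLdq \cdot a\lefri{q} + \dLddq \cdot \frac{d}{dt}a\lefri{q} = 0$ along any curve.

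Next I would restrict to a solution $q(t)$ of the Euler--Lagrange equations, where $\dLdq = \frac{d}{dt}\dLddq$. Substituting this into the identity above and collecting terms with the Leibniz rule yields $0 = \lefri{\frac{d}{dt}\dLddq}\cdot a\lefri{q} + \dLddq \cdot \frac{d}{dt}a\lefri{q} = \frac{d}{dt}\lefri{\dLddq \cdot a\lefri{q}} = \frac{d}{dt}\lefri{p^{T}a\lefri{q}} = \frac{d}{dt}I\lefri{p,q}$, using $p = \dLddq$. Thus $I$ is constant along every solution of the Euler--Lagrange equations; transporting this statement to phase space through the Legendre transform, which carries Euler--Lagrange solutions to integral curves of the Hamiltonian vector field, shows that $I\lefri{p,q}$ is a first integral of the Hamiltonian system.

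There is no genuine obstacle here: once the invariance is differentiated, the conclusion is a one-line computation. The only points needing a modicum of care are the interchange of $\partial_s$ and $d/dt$ and the observation that the invariance identity holds along \emph{arbitrary} curves, so that it may be evaluated on a solution; both are immediate from the regularity assumed of $L$ and of the one-parameter group $g_s$. As the text notes, this is the classical argument, with full details in \citet{HaLuWa2006}.
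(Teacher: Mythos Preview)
Your argument is correct and is exactly the classical proof of Noether's theorem: differentiate the invariance identity at $s=0$, swap the $s$- and $t$-derivatives, substitute the Euler--Lagrange equations, and recognize a total time derivative via the Leibniz rule. The paper does not actually supply its own proof of this statement---it simply cites \citet{HaLuWa2006}---so there is nothing to compare against beyond noting that what you have written is precisely the standard argument one finds in that reference.
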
%
\begin{theorem}\emph{(Discrete Noether's Theorem)} Suppose the one-parameter group of transformations leaves the discrete Lagrangian \(L_{d}\lefri{q_{k},q_{k+1}}\) invariant for all \(\lefri{q_{k},q_{k+1}}\). Then:%
\begin{align*}
p_{k+1}^{T}a\lefri{q_{k+1}} = p_{k}^{T}a\lefri{q_{k}} 
\end{align*}%
where%
\begin{align*}
p_{k} &= -D_{1}L_{d}\lefri{q_{k},q_{k+1}}, \\
p_{k+1} &= D_{2}L_{d}\lefri{q_{k},q_{k+1}}.
\end{align*}
\end{theorem}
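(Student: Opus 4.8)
The plan is to obtain the conservation law directly by differentiating the invariance hypothesis; no machinery beyond the chain rule and the definitions of the discrete momenta is needed.

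First I would write the hypothesis explicitly: that the one-parameter group \(g_s\) leaves \(L_d\) invariant means
\begin{align*}
L_d\lefri{g_s\lefri{q_k}, g_s\lefri{q_{k+1}}} = L_d\lefri{q_k, q_{k+1}}
\end{align*}
for every \(s \in \mathbb{R}\) and every pair \(\lefri{q_k, q_{k+1}} \in Q \times Q\), with the fixed time-step argument suppressed. The right-hand side does not depend on \(s\), so differentiating both sides at \(s = 0\), applying the chain rule, and using the definition \(a\lefri{q} = \left.\frac{d}{ds}\right|_{s=0} g_s\lefri{q}\) of the infinitesimal generator yields
\begin{align*}
D_1 L_d\lefri{q_k, q_{k+1}}\, a\lefri{q_k} + D_2 L_d\lefri{q_k, q_{k+1}}\, a\lefri{q_{k+1}} = 0.
\end{align*}
Substituting the stated definitions \(p_k = -D_1 L_d\lefri{q_k, q_{k+1}}\) and \(p_{k+1} = D_2 L_d\lefri{q_k, q_{k+1}}\) turns the two terms into \(-p_k^T a\lefri{q_k}\) and \(p_{k+1}^T a\lefri{q_{k+1}}\), so the identity rearranges immediately to \(p_{k+1}^T a\lefri{q_{k+1}} = p_k^T a\lefri{q_k}\), which is the claim.

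There is no real obstacle: the single substantive step is the differentiation, and the only points needing care are the smoothness required to differentiate under the invariance relation (supplied by the regularity of \(L_d\) and of the group action) and the distinction between this per-step identity and the global statement that \(p_k^T a\lefri{q_k}\) is constant along an entire discrete trajectory. For the global statement one invokes the discrete Euler--Lagrange equations \(D_2 L_d\lefri{q_{k-1}, q_k} + D_1 L_d\lefri{q_k, q_{k+1}} = 0\), which ensure that the momentum \(p_k = D_2 L_d\lefri{q_{k-1}, q_k}\) produced by the right discrete Legendre transform on the pair \(\lefri{q_{k-1}, q_k}\) coincides with the momentum \(-D_1 L_d\lefri{q_k, q_{k+1}}\) consumed by the left discrete Legendre transform on the next pair; chaining the per-step identity then propagates the invariant across all steps. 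I would include this remark, but the proof of the statement as written is complete after the substitution above.
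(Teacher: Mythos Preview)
Your argument is correct and is the standard proof of the discrete Noether theorem: differentiate the invariance relation \(L_d\lefri{g_s\lefri{q_k},g_s\lefri{q_{k+1}}}=L_d\lefri{q_k,q_{k+1}}\) at \(s=0\), apply the chain rule, and substitute the definitions of \(p_k\) and \(p_{k+1}\). The paper does not actually give its own proof of this statement; it simply cites \citet{HaLuWa2006}, and the argument there is precisely the one you have written (including the remark that the discrete Euler--Lagrange equations are what glue the per-step identity into a conserved quantity along the whole discrete trajectory). So there is nothing to compare: your proposal matches the referenced proof.
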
%
For the remainder of this section, we will refer to \(I\lefri{q,p}\) as the \emph{Noether quantity} and \(p_{n}^{T}a\lefri{q_{n}} = p_{n+1}^{T}a\lefri{q_{n+1}}\) as the \emph{discrete Noether quantity}.

 For Galerkin variational integrators, it is possible to bound the error of the Noether quantities along the Galerkin curve from the behavior of the analogous discrete Noether quantities of the discrete problem and, more importantly, this bound is independent of the number of time steps that are taken in the numerical integration. This is significant because it offers insight into the excellent behavior of spectral variational integrators even over long periods of integration.

The proof of convergence and near preservation of Noether quantities is broken into three major parts. First, we note that on step \(k\) of a numerical integration the discrete Noether quantity arises from a function of the Galerkin curve and the initial point of the one-step map \(\lefri{q_{k-1},q_{k}}\), and that a bound exists for the difference of this discrete Noether quantity evaluated on the Galerkin curve and evaluated on the local exact solution to the Euler-Lagrange equations \(\truq\). Second, we show that a bound exists for the difference of the discrete Noether quantity on the local exact solution of the Euler-Lagrange equations and the value of the Noether quantity of the local exact solution, which is conserved along the flow of the Euler-Lagrange vector field. Finally, we show that under certain smoothness conditions, there exists a point-wise bound between the Noether quantity evaluated on the Galerkin curve and the Noether quantity evaluated on the local exact solution. Thus, we establish a point-wise bound between the Noether quantity evaluated on the Galerkin curve and the discrete Noether quantity, and a bound between the discrete Noether quantity and the Noether quantity, which leads to a point-wise bound between the Noether quantity evaluated on the Galerkin curve, and the Noether quantity which is conserved along the global flow of the Euler-Lagrange vector field.

Throughout this section we will make the simplifying assumptions that%
\begin{align*}
\galqn = \sum_{i=1}^{n} q_{k}^{i}\phi_{i}
\end{align*}%
where \(q_{k}^{1} = q_{k}\), and thus
\begin{align*}
\frac{\partial \galqn}{\partial q_{k}} = \phi_{1}.
\end{align*}%
This assumption significantly simplifies the analysis.

We begin by bounding the discrete Noether quantity by a function of the local exact solution of the Euler-Lagrange equations.%
 
\begin{lemma}\emph{(Bound on Discrete Noether Quantity)} \label{DNBound} Define the Galerkin Noether map as:%
\begin{align*}
I_{d}\lefri{q\lefri{t},q_{k}} &= -\lefri{h\sum_{j=1}^{n}b_{j}\left[\dLdq\lefri{q,\dot{q}}\phi_{1} + \dLddq\lefri{q,\dot{q}}\dot{\phi}_{1}\right]}^{T}a\lefri{q_{k}}
\end{align*}%
and note that the discrete Noether quantity is given by%
\begin{align*}
I_{d}\lefri{\galqn,q_{k}} = p_{n}^{T}a\lefri{q_{k}}.
\end{align*}
Assuming the quadrature accuracy of Theorem (\ref{SpecConv}) with \(n\)-refinement and Theorem (\ref{OptConv}) with \(h\)-refinement, if \(\dLdq\lefri{q,\dot{q}}\), \(\dLddq\lefri{q,\dot{q}}\) and \(\frac{\mbox{d}}{\dt}\dLddq\) are Lipschitz continuous, \(\LNorm{\phi_{1}}{\infty}\) is bounded with \(n\) refinement, and \(\SobNorm{\galqn - \truq}{1}\) is bounded below by the quadrature error, then%
\begin{align*}
\left|I_{d}\lefri{\galqn,q_{k}} - I_{d}\lefri{\truq,q_{k}}\right| \leq C \left|a\lefri{q_{k}}\right|\lefri{\SobNorm{\galqn - \truq}{1} + \LNorm{\galqn - \truq}{\infty} + \LNorm{\dgalqn - \dtruq}{\infty}}
\end{align*}
for some \(C\) independent of \(n\) and \(h\).
\end{lemma}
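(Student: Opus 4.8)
The plan is to peel off the factor $\left|a\lefri{q_{k}}\right|$ by Cauchy--Schwarz, reducing the claim to an estimate on the vector difference $\left|v_{n}-\bar v\right|$, where $v_{n}=h\sum_{j=1}^{n}\bj\lefri{\dLdq\lefri{\galqn,\dgalqn}\phi_{1}+\dLddq\lefri{\galqn,\dgalqn}\dot{\phi}_{1}}\lefri{c_{j}h}$ and $\bar v$ is the same expression evaluated along the local exact solution $\truq$ (which shares the endpoints $q_{k},q_{k+1}$ with $\galqn$). I would then split $v_{n}-\bar v$ into the part coming from the $\phi_{1}$ term and the part coming from the $\dot{\phi}_{1}$ term, since these behave very differently under $n$-refinement: $\LNorm{\phi_{1}}{\infty}$ stays bounded by hypothesis, whereas $\LNorm{\dot{\phi}_{1}}{\infty}$ (the sup-norm of the derivative of a high-degree interpolating polynomial) need not, so a single pointwise estimate is hopeless for the second piece.

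For the $\phi_{1}$ part, namely $h\sum_{j}\bj\lefri{\dLdq\lefri{\galqn,\dgalqn}-\dLdq\lefri{\truq,\dtruq}}\lefri{c_{j}h}\phi_{1}\lefri{c_{j}h}$, I would estimate it directly using $\bj>0$, $\sum_{j}\bj=1$, Lipschitz continuity of $\dLdq$, and $\LNorm{\phi_{1}}{\infty}<\infty$, obtaining a constant multiple of $\LNorm{\galqn-\truq}{\infty}+\LNorm{\dgalqn-\dtruq}{\infty}$ (with an extra harmless factor of $h$). For the $\dot{\phi}_{1}$ part I would (i) invoke the assumed quadrature accuracy to replace $h\sum_{j}\bj\lefri{\dLddq\lefri{\galqn,\dgalqn}-\dLddq\lefri{\truq,\dtruq}}\lefri{c_{j}h}\dot{\phi}_{1}\lefri{c_{j}h}$ by $\int_{0}^{h}\lefri{\dLddq\lefri{\galqn,\dgalqn}-\dLddq\lefri{\truq,\dtruq}}\dot{\phi}_{1}\,\dt$ plus a quadrature error, which by the hypothesis that $\SobNorm{\galqn-\truq}{1}$ bounds the quadrature error is dominated by $\SobNorm{\galqn-\truq}{1}$; (ii) integrate by parts, $\int_{0}^{h}g\,\dot{\phi}_{1}\,\dt=\left[g\,\phi_{1}\right]_{0}^{h}-\int_{0}^{h}\frac{\mbox{d}}{\dt}g\,\phi_{1}\,\dt$ with $g=\dLddq\lefri{\galqn,\dgalqn}-\dLddq\lefri{\truq,\dtruq}$, to transfer the derivative off $\phi_{1}$; and (iii) estimate the two resulting terms. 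The boundary term is controlled by Lipschitz continuity of $\dLddq$ together with $\galqn\lefri{0}=\truq\lefri{0}$, $\galqn\lefri{h}=\truq\lefri{h}$ and $\LNorm{\phi_{1}}{\infty}<\infty$, yielding a multiple of $\LNorm{\dgalqn-\dtruq}{\infty}$; and the remaining integral, using Lipschitz continuity of $\frac{\mbox{d}}{\dt}\dLddq$ and $\LNorm{\phi_{1}}{\infty}<\infty$, is at most $\LNorm{\phi_{1}}{\infty}\LagLipC\int_{0}^{h}\lefri{\left|\galqn-\truq\right|+\left|\dgalqn-\dtruq\right|}\dt=C\SobNorm{\galqn-\truq}{1}$. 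Adding these contributions, multiplying back by $\left|a\lefri{q_{k}}\right|$, and checking that $\LNorm{\phi_{1}}{\infty}$, $\sum_{j}\bj$ and the interval length enter only through $n$- and $h$-independent factors yields the stated bound.

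The main obstacle is the $\dot{\phi}_{1}$ term: the naive pointwise estimate is ruined by $\LNorm{\dot{\phi}_{1}}{\infty}$ blowing up under $n$-refinement, so the whole argument hinges on first trading the quadrature sum for an integral --- legitimate only because the quadrature-accuracy hypothesis together with the ``bounded below by the quadrature error'' hypothesis lets that error be absorbed into the $\SobNorm{\galqn-\truq}{1}$ term on the right --- and then integrating by parts to move the derivative onto $\phi_{1}$. A secondary technical point is arranging the Lipschitz hypotheses on $\dLdq$, $\dLddq$ and $\frac{\mbox{d}}{\dt}\dLddq$ so that the difference of $\frac{\mbox{d}}{\dt}\dLddq$ along $\galqn$ and along $\truq$ is controlled pointwise by $\left|\galqn-\truq\right|+\left|\dgalqn-\dtruq\right|$ --- so that its integral is exactly a $W^{1,1}$-norm and no spurious second-derivative term intrudes --- and then verifying that every constant produced is genuinely independent of $n$ and of $h$.
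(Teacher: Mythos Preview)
Your proposal is correct and follows essentially the same approach as the paper: factor out $\left|a\lefri{q_{k}}\right|$, trade the quadrature sum for an integral plus a quadrature error that is absorbed into $\SobNorm{\galqn-\truq}{1}$, integrate the $\dot{\phi}_{1}$ term by parts to avoid $\LNorm{\dot{\phi}_{1}}{\infty}$, and then apply the Lipschitz hypotheses on $\dLdq$, $\dLddq$, and $\frac{\mbox{d}}{\dt}\dLddq$. The only cosmetic difference is that you split off the $\phi_{1}$ piece and bound it pointwise (yielding the $L^{\infty}$ terms directly), whereas the paper converts the entire sum to an integral first and obtains that piece as part of the Sobolev bound; either route works.
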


\begin{proof}
We begin by expanding the definitions of the discrete Noether quantity:
\begin{align*}
\left|I_{d}\lefri{\galqn,q_{k}} - I_{d}\lefri{\truq,q_{k}}\right|=& \left|h\lefri{\sum_{j=1}^{m} b_{j}\left[\dLdq\lefri{\galqn,\dgalqn}\phi_{1} + \dLddq \lefri{\galqn,\dgalqn}\dot{\phi}_{1}\right]}^{T}a\lefri{q_{k}} \right. \\
& \hspace{5em} - \left. \lefri{h\sum_{j=1}^{m} b_{j}\left[\dLdq\lefri{\truq,\dtruq}\phi_{1} + \dLddq \lefri{\truq,\dtruq}\dot{\phi}_{1}\right]}^{T}a\lefri{q_{k}}\right| \\ 
=& \left|\lefri{h\sum b_{j} \left[\lefri{\dLdq\lefri{\galqn,\dgalqn} - \dLdq\lefri{\truq,\dtruq}}\phi_{1} - \lefri{\dLddq\lefri{\galqn,\dgalqn} - \dLddq\lefri{\truq,\dtruq}}\dot{\phi_{1}}\right]}^{T}a\lefri{q_{k}}\right| \\
\leq & \left|h\sum_{j=1}^{m} b_{j} \left[\lefri{\dLdq\lefri{\galqn,\dgalqn} - \dLdq\lefri{\truq,\dtruq}}\phi_{1} - \lefri{\dLddq\lefri{\galqn,\dgalqn} - \dLddq\lefri{\truq,\dtruq}}\dot{\phi_{1}}\right] \right|\left|a\lefri{q_{k}}\right|.
\end{align*}%
Now we introduce the function \(\quaderr{\cdot}{\cdot}\) which gives the error of the quadrature rule, and thus
\begin{align*}
\left|I_{d}\lefri{\galqn,q_{k}} - I_{d}\lefri{\truq,q_{k}}\right| \leq & \left|\int_{0}^{h}\lefri{\dLdq\lefri{\galqn,\dgalqn} - \dLdq\lefri{\truq,\dtruq}}\phi_{1} - \lefri{\dLddq\lefri{\galqn,\dgalqn} - \dLddq\lefri{\truq,\dtruq}}\dot{\phi}_{1} \dt\right. \\
& \hspace{5em} \left. \vphantom{\int_{0}^{h}\dLddq} + \quaderr{\galqn - \truq}{\dgalqn - \dtruq}\right| \left|a\lefri{q_{k}}\right|. 
\end{align*}%
Integrating by parts, we get:%
\begin{align*}
\left|I_{d}\lefri{\galqn,q_{k}} - I_{d}\lefri{\truq,q_{k}}\right| \leq & \left|\int_{0}^{h}\lefri{\dLdq\lefri{\galqn,\dgalqn} - \dLdq\lefri{\truq,\dtruq}}\phi_{1} - \frac{\mbox{d}}{\dt}\lefri{\dLddq\lefri{\galqn,\dgalqn} - \dLddq\lefri{\truq,\dtruq}}\phi_{1} \dt\right. \\
& \hspace{5em} \left. \vphantom{\int_{0}^{h}\dLddq} + \left.\lefri{\dLddq\lefri{\galqn,\dgalqn} - \dLddq\lefri{\truq,\dtruq}}\phi_{1}\right|_{0}^{h} + \quaderr{\galqn - \truq}{\dgalqn - \dtruq}\right| \left|a\lefri{q_{k}}\right|. 
\end{align*}%
Introducing the Lipschitz constants \(L_{1}\) for \(\dLdq\), \(L_{2}\) for \(\dLddq\), and \(L_{3}\) for \(\frac{d}{dt}\dLddq\),%
\begin{align*}
\left|I_{d}\lefri{\galqn,q_{k}} - I_{d}\lefri{\truq,q_{k}}\right| \leq& \left(\int_{0}^{h} \lefri{L_{1} + L_{3}}\left|\lefri{\galqn,\dgalqn} - \lefri{\truq,\dtruq}\right|\left|\phi_{1}\right| \dt + 2L_{2}\lefri{\LNorm{\phi_{1}}{\infty}} \right. \\
& \hspace{5em}  \left. \vphantom{\int_{0}^{h}} \left(\LNorm{\galqn - \truq}{\infty} + \LNorm{\dgalqn - \dtruq}{\infty}\right) + \quaderr{\galqn - \truq}{\dgalqn - \dtruq} \right) \left|a\lefri{q_{k}}\right|\\
& \leq \lefri{L_{1} + L_{3}}\LNorm{\phi_{1}}{\infty}\left|a\lefri{q_{k}}\right|\lefri{\int_{0}^{h} \left|\lefri{\galqn,\dgalqn} - \lefri{\truq,\dtruq}\right|\dt} \\
& \hspace{5em} + 2L_{2}\lefri{\LNorm{\phi_{1}}{\infty}}\left|a\lefri{q_{k}}\right|\lefri{\LNorm{\galqn - \truq}{\infty} + \LNorm{\dgalqn - \dtruq}{\infty}} \\
& \hspace{5em} + \quaderr{\galqn - \truq}{\dgalqn - \dtruq}\left|a\lefri{q_{k}}\right|.
\end{align*}%
We now make the simplification that the quadrature error \(\left|\quaderr{\cdot}{\cdot}\right|\) serves as a lower bound for \(\SobNorm{\galqn - \truq}{1}\). While this may not strictly hold, all of our estimates on the convergence for \(\galqn\) imply this bound, and hence it is a reasonable simplification for establishing convergence in this case. Now, note that \(\LNorm{\phi_{1}}{\infty}\) is invariant under \(h\) rescaling, and let%
\begin{align*}
C = \max\lefri{L_{1} + L_{3},2L_{2}}\LNorm{\phi_{1}}{\infty} + 1
\end{align*}%
to get%
\begin{align*}
\left|I_{d}\lefri{\galqn,q_{k}} - I_{d}\lefri{\truq,q_{k}}\right| \leq C\left|a\lefri{q_{k}}\right|\lefri{\SobNorm{\galqn - \truq}{1} + \LNorm{\galqn - \truq}{\infty} + \LNorm{\dgalqn-\dtruq}{\infty}}
\end{align*}%
which establishes the result.
\end{proof}

Lemma \ref{DNBound} establishes a bound between the discrete Noether quantity and \(I_{d}\lefri{\truq,q_{k}}\). The next step is to establish a bound between \(I_{d}\lefri{\truq,q_{k}}\) and the Noether quantity. 

\begin{lemma}\label{DNandTNBound}\emph{(Error Between Discrete Noether Quantity and True Noether Quantity)}  Assume that \(\phi_{1}\lefri{0} = 1\) and \(\phi_{1}\lefri{h} = 0 \), and that the sequence \(\left\{\left|a\lefri{q_{k}}\right|\right\}_{k=1}^{N}\) is bounded by a constant \(\akC\) which is independent of \(N\). Let%
\begin{align*}
\trup\lefri{t} = \dLddq\lefri{\truq\lefri{t},\dtruq\lefri{t}}. 
\end{align*}
Once again, let the error of the quadrature rule be given by \(\quaderr{\cdot}{\cdot}\). Then
\begin{align*}
\left|I^{d}\lefri{\truq,q_{k}} - I\lefri{\trup\lefri{t},\truq\lefri{t}}\right| &\leq \akC \left|\quaderr{\truq}{\dtruq}\right| 
\end{align*}%
for any \(t \in \left[0,h\right]\).
\end{lemma}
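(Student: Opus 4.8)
The plan is to exploit the fact that the local exact solution $\truq$ satisfies the Euler--Lagrange equations on $\left[0,h\right]$ exactly, so that after an integration by parts the interior contribution to $I_{d}\lefri{\truq,q_{k}}$ cancels and only a boundary term survives, which reproduces the continuous Noether quantity up to the quadrature error.

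First I would unpack the definition of $I_{d}$ and replace the quadrature sum by the corresponding integral plus the quadrature error $\quaderr{\truq}{\dtruq}$:
\begin{align*}
I_{d}\lefri{\truq,q_{k}} = -\lefri{\int_{0}^{h}\lefri{\dLdq\lefri{\truq,\dtruq}\phi_{1} + \dLddq\lefri{\truq,\dtruq}\dot{\phi}_{1}}\dt + \quaderr{\truq}{\dtruq}}^{T}a\lefri{q_{k}}.
\end{align*}
Integrating the $\dLddq\,\dot{\phi}_{1}$ term by parts and regrouping turns the integral into
\begin{align*}
\int_{0}^{h}\lefri{\dLdq\lefri{\truq,\dtruq} - \frac{\mbox{d}}{\dt}\dLddq\lefri{\truq,\dtruq}}\phi_{1}\dt + \left.\dLddq\lefri{\truq,\dtruq}\phi_{1}\right|_{0}^{h}.
\end{align*}
The integrand vanishes identically since $\truq$ solves the Euler--Lagrange equations, and the boundary term reduces to $-\trup\lefri{0}$ upon using $\phi_{1}\lefri{0} = 1$ and $\phi_{1}\lefri{h} = 0$; hence $I_{d}\lefri{\truq,q_{k}} = \trup\lefri{0}^{T}a\lefri{q_{k}} - \quaderr{\truq}{\dtruq}^{T}a\lefri{q_{k}}$.

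Next I would note that $q_{k} = \truq\lefri{0}$, so $\trup\lefri{0}^{T}a\lefri{q_{k}} = I\lefri{\trup\lefri{0},\truq\lefri{0}}$, and invoke the continuous Noether's theorem: because $\truq$ is an exact Euler--Lagrange trajectory on $\left[0,h\right]$ and the one-parameter group leaves $L$ invariant, $I\lefri{\trup\lefri{t},\truq\lefri{t}}$ is constant on $\left[0,h\right]$, hence equal to $I\lefri{\trup\lefri{0},\truq\lefri{0}}$ for every $t$. Subtracting, the principal terms cancel and Cauchy--Schwarz together with the uniform bound $\left|a\lefri{q_{k}}\right| \leq \akC$ gives
\begin{align*}
\left|I_{d}\lefri{\truq,q_{k}} - I\lefri{\trup\lefri{t},\truq\lefri{t}}\right| = \left|\quaderr{\truq}{\dtruq}^{T}a\lefri{q_{k}}\right| \leq \akC\left|\quaderr{\truq}{\dtruq}\right|.
\end{align*}

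There is no genuine analytic difficulty here; the whole argument is the algebraic identity that the exactness of $\truq$ annihilates the interior integral while the normalization of $\phi_{1}$ converts the remaining boundary term into $\trup\lefri{0}$. The only points that need care are tracking the quadrature error so that it is exactly $\quaderr{\truq}{\dtruq}$ that appears (rather than an error associated with a different integrand), verifying from the construction that $q_{k} = \truq\lefri{0}$, and observing that the stated bound holds for all $t \in \left[0,h\right]$ precisely because the continuous Noether quantity is conserved along the true flow.
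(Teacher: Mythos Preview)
Your proposal is correct and follows essentially the same route as the paper: replace the quadrature sum by the integral plus $\quaderr{\truq}{\dtruq}$, integrate by parts, use that $\truq$ satisfies the Euler--Lagrange equations to kill the interior term, apply $\phi_{1}\lefri{0}=1$, $\phi_{1}\lefri{h}=0$ and $\truq\lefri{0}=q_{k}$ to identify the boundary term with $\trup\lefri{0}^{T}a\lefri{q_{k}}$, and then invoke conservation of the continuous Noether quantity along $\truq$ together with $\left|a\lefri{q_{k}}\right|\leq\akC$. The only cosmetic difference is ordering: the paper first reduces to $t=0$ via conservation and then computes, whereas you compute at $t=0$ first and invoke conservation afterward.
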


\begin{proof}
First, we note that since \(\truq\) solves the Euler-Lagrange equations exactly, \(I\lefri{\trup\lefri{t},\truq\lefri{t}}\) is a conserved quantity along the flow, so it suffices to show the inequality holds for \(t=0\). We begin by expanding:
\begin{align*}
\left|I^{d}\lefri{\truq,q_{k}} - I\lefri{\trup\lefri{0},\truq\lefri{0}}\right| &= \left|-h\lefri{\sum_{j=1}^{m}b_{j}\dLdq\lefri{\truq,\dtruq}\phi_{1} + \dLddq\lefri{\truq,\dtruq}\dot{\phi}_{1}}^{T}a\lefri{q_{k}} - \trup\lefri{0}^{T}a\lefri{\truq\lefri{0}}\right| \\
&= \left|-\lefri{\int_{0}^{h}\dLdq\lefri{\truq,\dtruq}\phi_{1} + \dLddq\lefri{\truq,\dtruq}\dot{\phi}_{1}\dt + \quaderr{\truq}{\dtruq}}^{T}a\lefri{q_{k}} - \trup\lefri{0}^{T}a\lefri{\truq\lefri{0}} \right|\\
&= \left|-\left(\int_{0}^{h}\lefri{\dLdq\lefri{\truq,\dtruq} - \frac{d}{d\mbox{t}}\dLddq\lefri{\truq,\dtruq}}\phi_{1}\dt + \dLddq\lefri{\truq\lefri{h},\dtruq\lefri{h}}\phi_{1}\lefri{h} \right. \right. \\
& \left. \left. \hspace{5em} - \vphantom{\int_{0}^{h}} \dLddq\lefri{\truq\lefri{0},\dtruq\lefri{0}}\phi_{1}\lefri{0} + \quaderr{\truq}{\dtruq}\right)^{T} a\lefri{q_{k}} - \trup\lefri{0}^{T}a\lefri{\truq\lefri{0}} \right|
\end{align*}%
Since \(\truq\lefri{t}\) solves the Euler-Lagrange equations, \(\phi_{1}\lefri{0} = 1\) and \(\phi_{1}\lefri{h} = 0\), and \(\truq\lefri{0} = q_{k}\),
\begin{align*}
\left|I^{d}\lefri{\truq,q_{k}} - I\lefri{\trup\lefri{0},\truq\lefri{0}}\right| =& \left|\lefri{\dLddq\lefri{\truq\lefri{0},\dtruq\lefri{0}}}^{T}a\lefri{q_{k}} + \lefri{\quaderr{\truq}{\dtruq}}^{T}a\lefri{q_{k}} - \trup\lefri{0}^{T}a\lefri{q_{k}}\right|\\
=& \left|\lefri{\trup\lefri{0}}^{T}a\lefri{q_{k}} + \lefri{\quaderr{\truq}{\dtruq}}^{T}a\lefri{q_{k}} - \lefri{\trup\lefri{0}}^{T}a\lefri{q_{k}}\right|\\
=& \left|\quaderr{\truq}{\dtruq}^{T}a\lefri{q_{k}}\right|\\
\leq & \left|\quaderr{\truq}{\dtruq}\right| \left|a\lefri{q_{k}}\right|\\
\leq & \akC \left|\quaderr{\truq}{\dtruq}\right|
\end{align*}%
which yields the desired bound.
\end{proof}

Once again, if we assume that the quadrature error serves as a lower bound for the Sobolev error, combining the bounds from (\ref{DNBound}) and (\ref{DNandTNBound}) yields:%
\begin{align*}
\left|I_{d}\lefri{\galqn,q_{k}} - I\lefri{\trup\lefri{t},\truq\lefri{t}}\right| & \leq 2C\akC\lefri{\SobNorm{\galqn - \truq}{1} + \LNorm{\galqn - \truq}{\infty} + \LNorm{\dgalqn - \dtruq}{\infty}}.
\end{align*}%
This bound serves two purposes; the first is to establish a bound between the discrete Noether quantity and the Noether quantity computed on the local exact solution \(\truq\). The second is to establish a bound between the discrete Noether quantity after one step and the Noether quantity computed on the initial data:%
\begin{align*}
\left|I_{d}\lefri{\galqn,q_{1}} - I\lefri{p\lefri{0},q\lefri{0}}\right| &\leq \
2C\akC\lefri{\SobNorm{\galqn - \truq}{1} + \LNorm{\galqn - \truq}{\infty} + \LNorm{\dgalqn - \dtruq}{\infty}},
\end{align*}%
since for \(\lefri{q_{1},q_{2}}\), \(\truq\) is the \emph{global} exact flow of the Euler-Lagrange equations.

The difference between these two bounds is subtle but important; by establishing a bound between the discrete Noether quantity and the Noether quantity associated with the initial conditions, on any step of the method we can bound the error between the discrete Noether quantity and the Noether quantity associated with the \emph{global} exact flow. By establishing the bound between the discrete Noether quantity and the Noether quantity associated with \(\truq\) at any step, we can bound the error between the Noether quantity associated with the local exact flow \(\truq\) and the true Noether quantity conserved along the global exact flow:%
\begin{align}
\left|I\lefri{\trup\lefri{t},\truq\lefri{t}} - I\lefri{p\lefri{0},q\lefri{0}}\right| &\leq \left|I\lefri{\trup\lefri{t},\truq\lefri{t}} - I_{d}\lefri{\galqn,q_{k}}\right| + \left|I_{d}\lefri{\galqn,q_{k}} - I\lefri{p\lefri{0},q\lefri{0}}\right| \nonumber \\
&\leq 4C\akC\lefri{\SobNorm{\galqn - \truq}{1} + \LNorm{\galqn - \truq}{\infty} + \LNorm{\dgalqn - \dtruq}{\infty}} \label{SuperImportantBound}
\end{align}%
for any \(t_{0} \in \left[0,h\right]\) on any time step \(k\). Because the local exact flow \(\truq\) is generated from boundary conditions \(\lefri{q_{k},q_{k+1}}\) which only approximate the boundary conditions of the true flow, there is no guarantee that the Noether quantity associated with \(\truq\) will be the same step to step, only that it will be conserved within each time step. However, because there is a bound between the Noether quantity associated with \(\truq\) and the discrete Noether quantity at every time step, the discrete Noether quantity and the Noether quantity associated with the exact flow, and because the Noether quantity is conserved point-wise along \(\truq\) on each time step, there exists a bound between the Noether quantity associated with each point of the local exact flow and the Noether quantity associated with the true solution.

We finally arrive at the desired result, which is a theorem that bounds the error between the Noether quantity along the Galerkin curve and the true Noether quantity. It is significant because not only does it bound the error of the Noether quantity, but the bound is independent of the number of steps taken, and hence will not grow even for extremely long numerical integrations.

\begin{theorem}\emph{(Convergence of Conserved Noether Quantities)}\label{ConvNQ} Define%
\begin{align*}
\galpn = \dLddq\lefri{\galqn,\dgalqn}.
\end{align*}
 Under the assumptions of Lemmas (\ref{DNBound} - \ref{DNandTNBound}), if the Noether map \(I\lefri{p,q}\) is Lipschitz continuous in both its arguments, then there exists a constant \(\NoeC\) independent \(N\), the number of method steps, such that:%
\begin{align*}
\left|I\lefri{p\lefri{0},q\lefri{0}} - I\lefri{\tilde{p}_{n}\lefri{t},\galqn\lefri{t}}\right| \leq \NoeC\lefri{\SobNorm{\galqn - \truq}{1} + \LNorm{\galqn - \truq}{\infty} + \LNorm{\dgalqn - \dtruq}{\infty}}.
\end{align*}%
for any \(t \in \left[0,Nh\right]\).
\end{theorem}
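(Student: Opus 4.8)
The plan is to estimate the target quantity by the triangle inequality, inserting the Noether quantity $I\lefri{\trup\lefri{t},\truq\lefri{t}}$ along the \emph{local} exact Euler--Lagrange solution as an intermediate term. Fix $t \in \left[0,Nh\right]$ and let $k$ be the index of the time step containing $t$, so that $t$ corresponds to a local time in $\left[0,h\right]$ on that step; there $\truq$ denotes the exact solution with boundary data $\lefri{q_{k-1},q_{k}}$ and $\galqn$ the associated Galerkin curve, with $\trup = \dLddq\lefri{\truq,\dtruq}$. Then
\begin{align*}
\left|I\lefri{p\lefri{0},q\lefri{0}} - I\lefri{\galpn\lefri{t},\galqn\lefri{t}}\right| \leq \left|I\lefri{p\lefri{0},q\lefri{0}} - I\lefri{\trup\lefri{t},\truq\lefri{t}}\right| + \left|I\lefri{\trup\lefri{t},\truq\lefri{t}} - I\lefri{\galpn\lefri{t},\galqn\lefri{t}}\right|,
\end{align*}
and the two terms on the right will be handled separately.

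For the first term I would use the estimate \eqref{SuperImportantBound} assembled from Lemmas \ref{DNBound} and \ref{DNandTNBound}, which gives
\begin{align*}
\left|I\lefri{\trup\lefri{t},\truq\lefri{t}} - I\lefri{p\lefri{0},q\lefri{0}}\right| \leq 4C\akC\lefri{\SobNorm{\galqn - \truq}{1} + \LNorm{\galqn - \truq}{\infty} + \LNorm{\dgalqn - \dtruq}{\infty}}.
\end{align*}
The essential point is that this holds on \emph{every} step $k \leq N$ with the \emph{same} constant: $\left|a\lefri{q_{k}}\right| \leq \akC$ uniformly in $k$ by hypothesis, $\LNorm{\phi_{1}}{\infty}$ is invariant under rescaling of the interval, the Lipschitz constants of $\dLdq$, $\dLddq$ and $\frac{\mbox{d}}{\dt}\dLddq$ are global, and the discrete Noether quantity is conserved exactly from step to step by the discrete Noether theorem, so the bound on step $1$ (where $\truq$ is the global exact flow) propagates to step $k$ without accumulation. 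This is precisely what will make the final constant independent of $N$.

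For the second term I would invoke the Lipschitz hypotheses pointwise. Since $\galpn = \dLddq\lefri{\galqn,\dgalqn}$, $\trup = \dLddq\lefri{\truq,\dtruq}$ and $\dLddq$ is Lipschitz with some constant $L_{2}$,
\begin{align*}
\left|\galpn\lefri{t} - \trup\lefri{t}\right| \leq L_{2}\left|\lefri{\galqn\lefri{t},\dgalqn\lefri{t}} - \lefri{\truq\lefri{t},\dtruq\lefri{t}}\right| \leq L_{2}\lefri{\LNorm{\galqn - \truq}{\infty} + \LNorm{\dgalqn - \dtruq}{\infty}},
\end{align*}
and with $L_{I}$ the Lipschitz constant of $I$ in its two arguments,
\begin{align*}
\left|I\lefri{\trup\lefri{t},\truq\lefri{t}} - I\lefri{\galpn\lefri{t},\galqn\lefri{t}}\right| \leq L_{I}\lefri{\left|\galpn\lefri{t} - \trup\lefri{t}\right| + \left|\galqn\lefri{t} - \truq\lefri{t}\right|} \leq L_{I}\lefri{L_{2}+1}\lefri{\LNorm{\galqn - \truq}{\infty} + \LNorm{\dgalqn - \dtruq}{\infty}}.
\end{align*}
Adding the two estimates and setting $\NoeC = 4C\akC + L_{I}\lefri{L_{2}+1}$ yields the claimed inequality, with $\NoeC$ depending only on the (global) Lipschitz constants, $\akC$, and $\LNorm{\phi_{1}}{\infty}$.

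The chain of inequalities is routine; the real work, and the main obstacle, is checking that $\NoeC$ is genuinely independent of $N$. This requires that every ingredient --- the bounds of Lemmas \ref{DNBound}--\ref{DNandTNBound}, the convergence estimates of Theorems \ref{GeoGalerk} or \ref{OptGalerk}, and the boundedness of $\left\{\left|a\lefri{q_{k}}\right|\right\}$ --- be uniform over all $k \leq N$, and that the discrepancy $I\lefri{\trup,\truq} - I\lefri{p\lefri{0},q\lefri{0}}$ not telescope into an $N$-growing sum. The structural fact that prevents such accumulation is that $\truq$ solves the Euler--Lagrange equations exactly on each step (so its Noether quantity is an exact local invariant), while the discrete Noether quantity is exactly conserved across steps by construction of a variational integrator; the single-step bound \eqref{SuperImportantBound} therefore carries over to arbitrary $t \in \left[0,Nh\right]$ with no dependence on the number of steps. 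A minor technical point is that the right-hand norms are those of the Galerkin and local exact curves on the step containing $t$, which under the hypotheses of Theorems \ref{SpecConv} and \ref{OptConv} are themselves bounded uniformly in the step index, so the statement reads as a genuinely global bound.
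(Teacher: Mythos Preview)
Your proposal is correct and follows essentially the same route as the paper: the identical triangle-inequality split through $I\lefri{\trup\lefri{t},\truq\lefri{t}}$, the use of \eqref{SuperImportantBound} for one term, the pointwise Lipschitz estimate with constant $L_{I}\lefri{L_{2}+1}$ for the other, and the same final constant $\NoeC = 4C\akC + L_{I}\lefri{L_{2}+1}$. Your added discussion of why $\NoeC$ is independent of $N$ is a helpful elaboration of a point the paper leaves largely implicit.
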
%
%
%
\begin{proof}We begin by introducing the Noether quantity evaluated at \(t\) on the local exact flow, \(\truq\):
\begin{align}
\left|I\lefri{p\lefri{0},q\lefri{0}}-I\lefri{\galpn\lefri{t},\galqn\lefri{t}}\right| \leq& \left|I\lefri{\galpn\lefri{t},\galqn\lefri{t}} - I\lefri{\trup\lefri{t},\truq\lefri{t}}\right| \label{NoeTwoTerm}\\
& \hspace{5em} + \left|I\lefri{\trup\lefri{t},\truq\lefri{t}} - I\lefri{p\lefri{0},q\lefri{0}}\right|. \nonumber
\end{align}%
Considering the first term in (\ref{NoeTwoTerm}), let \(L_{4}\) be the Lipschitz constant for \(I\lefri{\cdot,\cdot}\). Then%
\begin{align}
\left|I\lefri{\galpn\lefri{t},\galqn\lefri{t}} - I\lefri{\trup\lefri{t},\truq\lefri{t}}\right| &\leq L_{4}\left|\lefri{\galpn\lefri{t},\galqn\lefri{t}} - \lefri{\trup\lefri{t},\truq\lefri{t}}\right| \nonumber \\
& \leq L_{4} \lefri{\left|\galpn\lefri{t} - \trup\lefri{t}\right| + \left|\galqn\lefri{t} - \trup\lefri{t}\right|} \nonumber \\
& = L_{4} \lefri{\left|\dLddq\lefri{\galqn\lefri{t},\dgalqn\lefri{t}} - \dLddq\lefri{\truq\lefri{t},\dtruq{\lefri{t}}}\right| + \left|\galqn\lefri{t} - \truq\lefri{t}\right|} \nonumber \\
& \leq L_{4}\lefri{L_{2}\left|\dgalqn\lefri{t} - \dtruq\lefri{t}\right| + \lefri{L_{2}+1}\left|\galqn\lefri{t} - \truq\lefri{t}\right|} \nonumber\\
& \leq L_{4}\lefri{L_{2} + 1}\lefri{\LNorm{\galqn - \truq}{\infty} + \LNorm{\dgalqn - \dtruq}{\infty}}. \label{LipBound}
\end{align}

The second term in (\ref{NoeTwoTerm}) is exactly the bound given by (\ref{SuperImportantBound}) and thus combining (\ref{LipBound}) and (\ref{SuperImportantBound}) in (\ref{NoeTwoTerm}) and defining \(\NoeC = 4C\akC + L\lefri{L_{2}+1}\), we have:%
\begin{align*}
\left|I\lefri{\galpn\lefri{t},\galqn\lefri{t}} - I\lefri{\trup\lefri{t},\truq\lefri{t}}\right| \leq \NoeC\lefri{\SobNorm{\galqn - \truq}{1} + \LNorm{\galqn - \truq}{\infty} + \LNorm{\dgalqn - \dtruq}{\infty}}
\end{align*}%
which completes the result.
\end{proof}
The convergence and bounds of the Noether quantity evaluated on the Galerkin curve to that of the true solution is hampered by one issue. While Theorems (\ref{GeoGalerk}) and (\ref{OptGalerk}) provide estimates for convergence in the Sobolev norm \(\SobNorm{\cdot}{1}\), Theorem (\ref{ConvNQ}) requires estimates in the \(L^{\infty}\) norm. We can establish a bound for \(\LNorm{\galqn\lefri{t} - \truq\lefri{t}}{\infty}\), but it is much more difficult to establish a general estimate for \(\LNorm{\dgalqn\lefri{t} - \dtruq\lefri{t}}{\infty}\).
%
%
\begin{lemma} \emph{(Bound on \(L^{\infty}\) Norm from Sobolev Norm)}\label{LinSobNorm} For any \(t \in \left[0,h\right]\), the following bound holds:%
\begin{align*}
\left|q\lefri{t}\right| \leq \max\lefri{\frac{1}{h},1} \SobNorm{q}{1}
\end{align*}%
and thus%
\begin{align*}
\LNorm{q}{\infty} \leq \max\lefri{\frac{1}{h},1} \SobNorm{q}{1}.
\end{align*}
\end{lemma}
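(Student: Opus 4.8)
The plan is to prove this purely analytically, via the fundamental theorem of calculus combined with an averaging trick. First I would fix an arbitrary $t \in \left[0,h\right]$ and observe that the curves to which this lemma is applied (the Galerkin curve \(\galqn\), the local exact solution \(\truq\), and differences thereof) are at least \(C^{2}\) on \(\left[0,h\right]\), hence absolutely continuous, so for any \(s \in \left[0,h\right]\) we may write \(q\lefri{t} = q\lefri{s} + \int_{s}^{t}\dot{q}\lefri{\tau}\dt\). Taking absolute values and enlarging the domain of integration gives \(\left|q\lefri{t}\right| \leq \left|q\lefri{s}\right| + \int_{0}^{h}\left|\dot{q}\lefri{\tau}\right|\dt = \left|q\lefri{s}\right| + \LNorm{\dot{q}}{1}\), valid for every \(s\).

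Next I would average this inequality over \(s \in \left[0,h\right]\): integrating both sides in \(s\) over \(\left[0,h\right]\) and dividing by \(h\) (the left side is independent of \(s\)) yields \(\left|q\lefri{t}\right| \leq \frac{1}{h}\int_{0}^{h}\left|q\lefri{s}\right|\,\mbox{d}s + \LNorm{\dot{q}}{1} = \frac{1}{h}\LNorm{q}{1} + \LNorm{\dot{q}}{1}\). Since \(\frac{1}{h} \leq \max\lefri{\frac{1}{h},1}\) and \(1 \leq \max\lefri{\frac{1}{h},1}\), the right-hand side is at most \(\max\lefri{\frac{1}{h},1}\lefri{\LNorm{q}{1} + \LNorm{\dot{q}}{1}} = \max\lefri{\frac{1}{h},1}\SobNorm{q}{1}\), which is exactly the claimed pointwise bound \(\left|q\lefri{t}\right| \leq \max\lefri{\frac{1}{h},1}\SobNorm{q}{1}\). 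Taking the supremum over \(t \in \left[0,h\right]\) on the left gives \(\LNorm{q}{\infty} \leq \max\lefri{\frac{1}{h},1}\SobNorm{q}{1}\), completing the proof.

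There is no serious obstacle in this argument; it is a one-dimensional Sobolev embedding estimate with an explicit constant. The only point worth a sentence of care is the validity of the representation \(q\lefri{t} - q\lefri{s} = \int_{s}^{t}\dot{q}\,\mbox{d}\tau\), which is where the smoothness (or at least absolute continuity) of the curves in question enters; everything else is the triangle inequality and the averaging step, and the latter is precisely what trades the otherwise uncontrolled endpoint value \(\left|q\lefri{s}\right|\) for \(\frac{1}{h}\LNorm{q}{1}\), producing the \(1/h\) factor that appears in the statement.
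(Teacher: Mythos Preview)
Your proposal is correct and follows essentially the same approach as the paper: write \(q(t)=q(s)+\int_{s}^{t}\dot{q}\), take absolute values to get \(\left|q(t)\right|\leq\left|q(s)\right|+\LNorm{\dot{q}}{1}\), then integrate (average) in \(s\) over \(\left[0,h\right]\) to obtain \(h\left|q(t)\right|\leq\LNorm{q}{1}+h\LNorm{\dot{q}}{1}\), from which the stated bound with the factor \(\max\lefri{\frac{1}{h},1}\) follows immediately. The paper's proof is slightly terser (it stops at the integrated inequality and says ``which yields the desired result''), but the argument is the same.
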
%
%
%
\begin{proof} This is a basic extension of the arguments from Lemma A.1. in \citet{LaTh2003}, generalizing the lemma from the interval \(\left[0,1\right]\) to an interval of arbitrary length, \(\left[0,h\right]\). We note that for any \(t, s \in \left[0,h\right]\), \(q\lefri{t} = q\lefri{s} + \int_{s}^{t} \dot{q}\lefri{u}\mbox{d}u\). Thus:%
\begin{align*}
\left|q\lefri{t}\right| \leq& \left|q\lefri{s}\right| + \int_{0}^{h} \left|\dot{q}\lefri{u}\right| \mbox{d}u \\
\leq& \left|q\lefri{s}\right| + \LNorm{\dot{q}}{1}. 
\end{align*}%
Now, we integrate with respect to \(s\):%
\begin{align*}
\int_{0}^{h} \left|q\lefri{t}\right| \mbox{d}s &\leq \int_{0}^{h}\left|q\lefri{s}\right| \mbox{d}s + \int_{0}^{h}\LNorm{\dot{q}}{1}\mbox{d}s \\
h\left|q\lefri{t}\right| &\leq \lefri{\LNorm{q}{1} + h\LNorm{\dot{q}}{1}}.
\end{align*}
which yields the desired result.
\end{proof}
Under certain assumptions about the behavior of \(\dgalqn - \dtruq\), it is possible to establish bounds on the point-wise error of \(\dgalqn\) from the Sobolev error \(\SobNorm{\galqn - \truq}{1}\). For example, if the length of time that the error is within a given fraction of the max error is proportional to the length of the interval \(\left[0,h\right]\), i.e. there exists \(C_{1},C_{2}\) independent of \(h\): i.e.,%
\begin{align*}
m\lefri{\left\{t\left|\left\|\lefri{\dgalqn\lefri{t} - \dtruq\lefri{t}}\right\| \geq C_{1}\left\|\dgalqn\lefri{t} - \dtruq\lefri{t}\right\|_{\infty}\right.\right\}} \geq C_{2}h,
\end{align*}%
where \(m\) is the Lebesque measure, then it can easily be seen that:%
\begin{align*}
\SobNorm{\galqn - \truq}{1} \geq \int_{0}^{h} \left\|\dgalqn\lefri{t} - \dtruq\lefri{t}\right\| \dt \geq C_{1}C_{2}h\LNorm{\dgalqn - \dtruq}{\infty}. 
\end{align*}%
While we will not establish here that the \(\dgalqn\) converges in the \(L^{\infty}\) norm with the same rate that the Galerkin curve converges in the Sobolev norm, our numerical experiments will show that the Noether quantities tend to converge at the same rate as the Galerkin curve.

\section{Numerical Experiments}

To support the results in this paper, as well as to investigate the efficiency and stability of spectral variational integrators, several numerical experiments were conducted by applying spectral variational techniques to well-known variational problems. For each problem, the spectral variational integrator was constructed using a Lagrange interpolation polynomials at \(n\) Chebyshev points with a Gauss quadrature rule at \(2n\) points. Convergence of both the one-step map and the Galerkin curves was measured using the \(\ell^{\infty}\) and \(L^{\infty}\) norms respectively, although we record them on the same axis using labeled \(L^{\infty}\) error in a slight abuse of notation. The experiments strongly support the results of this paper, and suggest topics for further investigation.

\afterpage{\clearpage}

\begin{figure}[p]
   \centering
   \includegraphics[width=.75\textwidth]{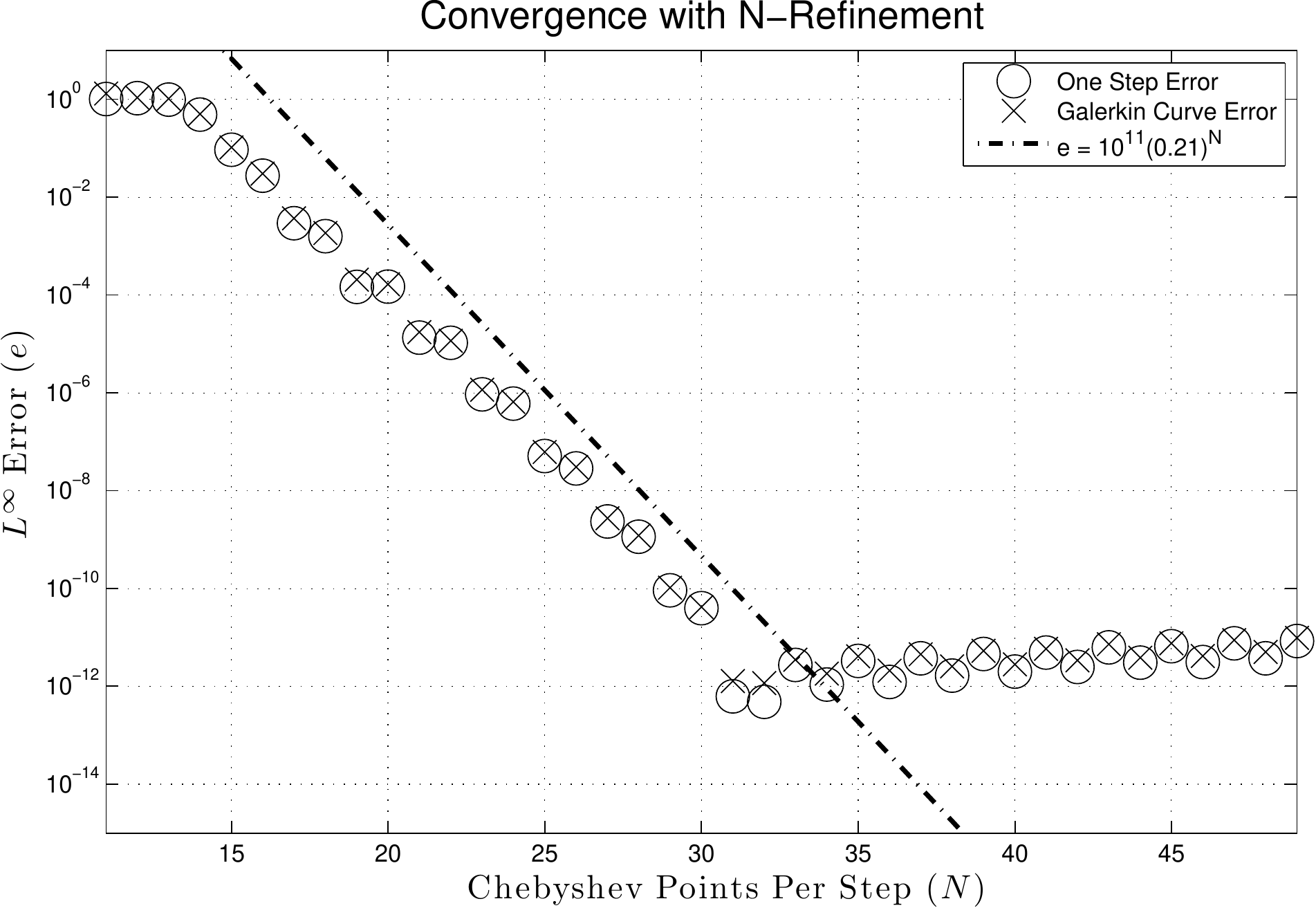}
   \caption{Geometric convergence of the spectral variational integration of the harmonic oscillator problem, for 100 steps at step size \(h=20.0\).}
   \label{fig:HarmOscNRefine}
\end{figure}

\begin{figure}[p]
   \centering
   \includegraphics[width=0.75\textwidth]{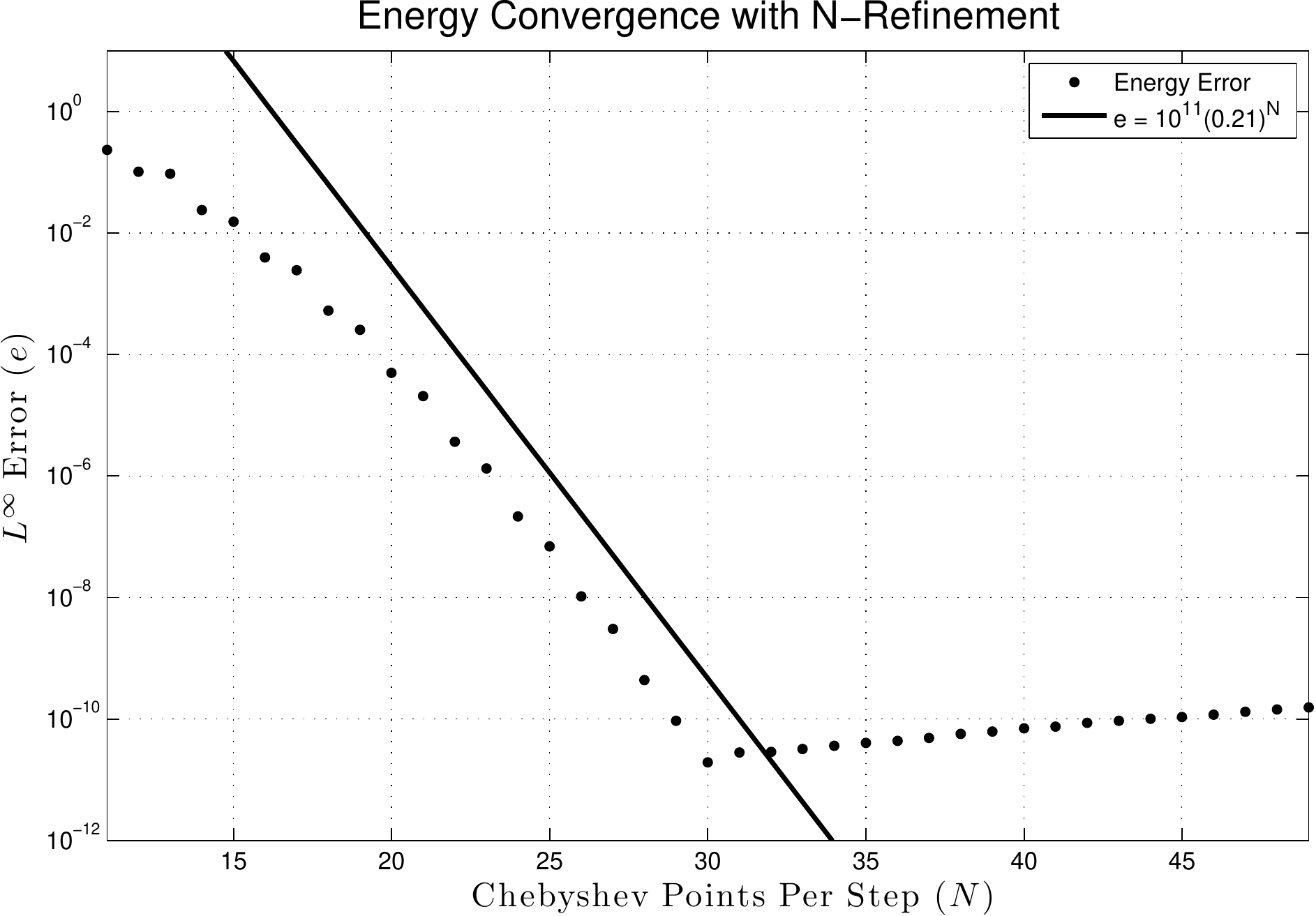}
   \caption{Geometric convergence of the energy error of the spectral variational integration of the harmonic oscillator problem for 100 steps at step size \(h=20.0\).}
   \label{fig:HarmOscEnergyNRefine}
\end{figure}

\subsection{Harmonic Oscillator}%

\begin{figure}[p]
 \centering
 \includegraphics[width=0.75\textwidth]{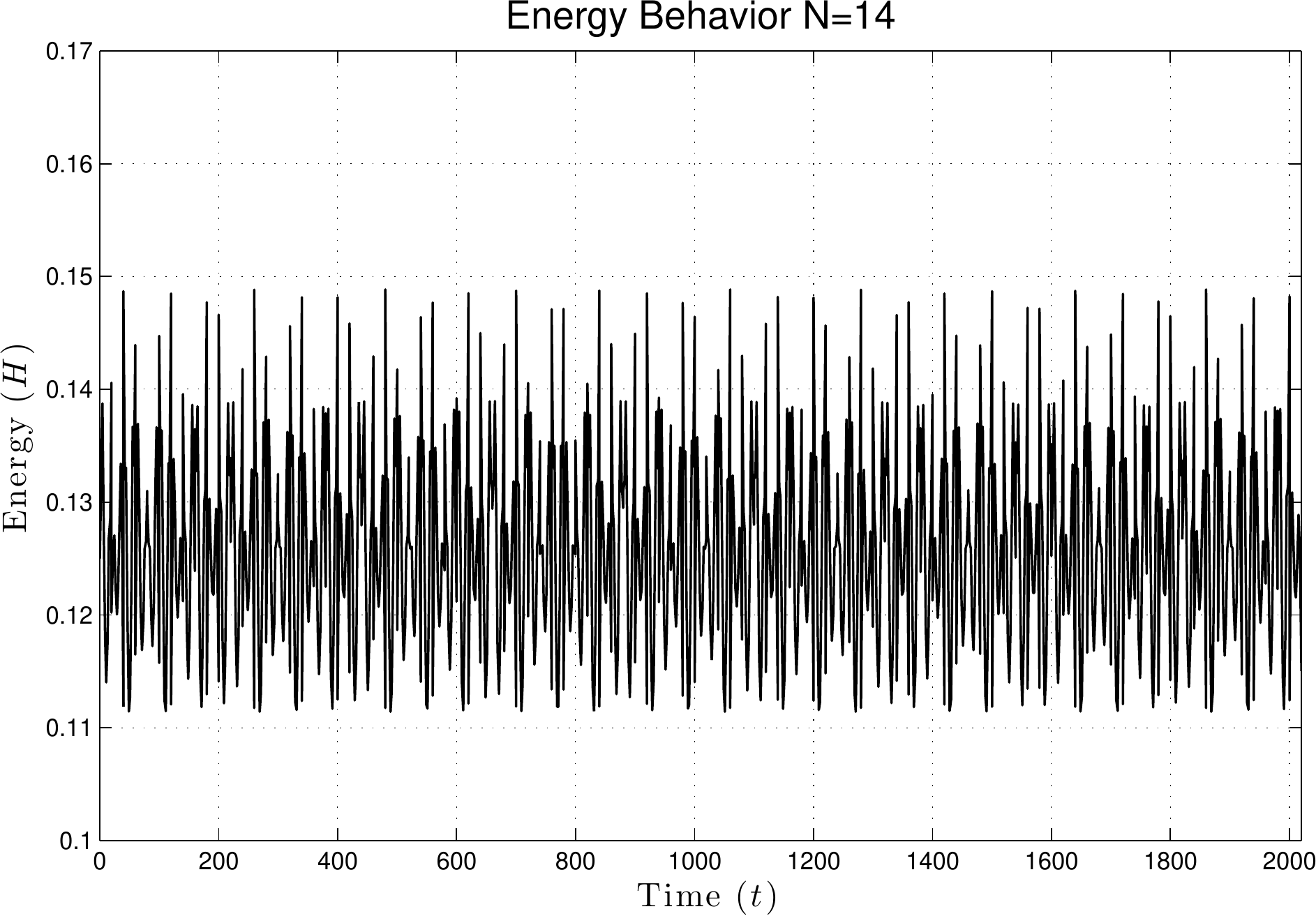}
 \caption{Energy Stability of the spectral variational integration of the harmonics oscillator problem. This energy was computed for the integration using \(n=14\) for step size \(h=20.0\).}
 \label{fig:HarmOscEnergyStability}
\end{figure}
The first and simplest numerical experiment conducted was the harmonic oscillator. Starting from the Lagrangian,%
\begin{align*}
L\lefri{q,\dot{q}} = \frac{1}{2}\dot{q}^{2} - \frac{1}{2}q^{2},
\end{align*}%
where \(q\in \mathbb{R}\), the induced spectral variational discrete Euler-Lagrange equations are linear. Choosing the large time step \(h=20\) over 100 steps yields the expected geometric convergence, and attains very high accuracy, as can be seen in Figure \ref{fig:HarmOscNRefine}. In addition, the max error of the energy converges geometrically, see Figure \ref{fig:HarmOscEnergyNRefine}, and does not grow over the time of integration, see Figure \ref{fig:HarmOscEnergyStability}.

\subsection{N-body Problems}

We now turn our attention towards Kepler \(N\)-body problems, which are both good benchmark problems and are interesting in their own right. The general form of the Lagrangian for these problems is%
\begin{align*}
L\lefri{q,\dot{q}} &= \frac{1}{2} \sum_{i=1}^{N} \dot{q}_{i}^{T}M\dot{q}_{i} + G\sum_{i=1}^{N} \sum_{j=0}^{i-1} \frac{m_{i}m_{j}}{\left\|q_{i}-q_{j}\right\|},
\end{align*}%
where \(q_{i} \in \mathbb{R}^{D}\) is the center of mass for body \(i\), \(G\) is a gravitational constant, and \(m_{i}\) is a mass constant associated with the body described by \(q_{i}\).

\subsubsection{2-Body Problem}

\begin{figure}[p]
  \centering
  \includegraphics[width = 0.75\textwidth]{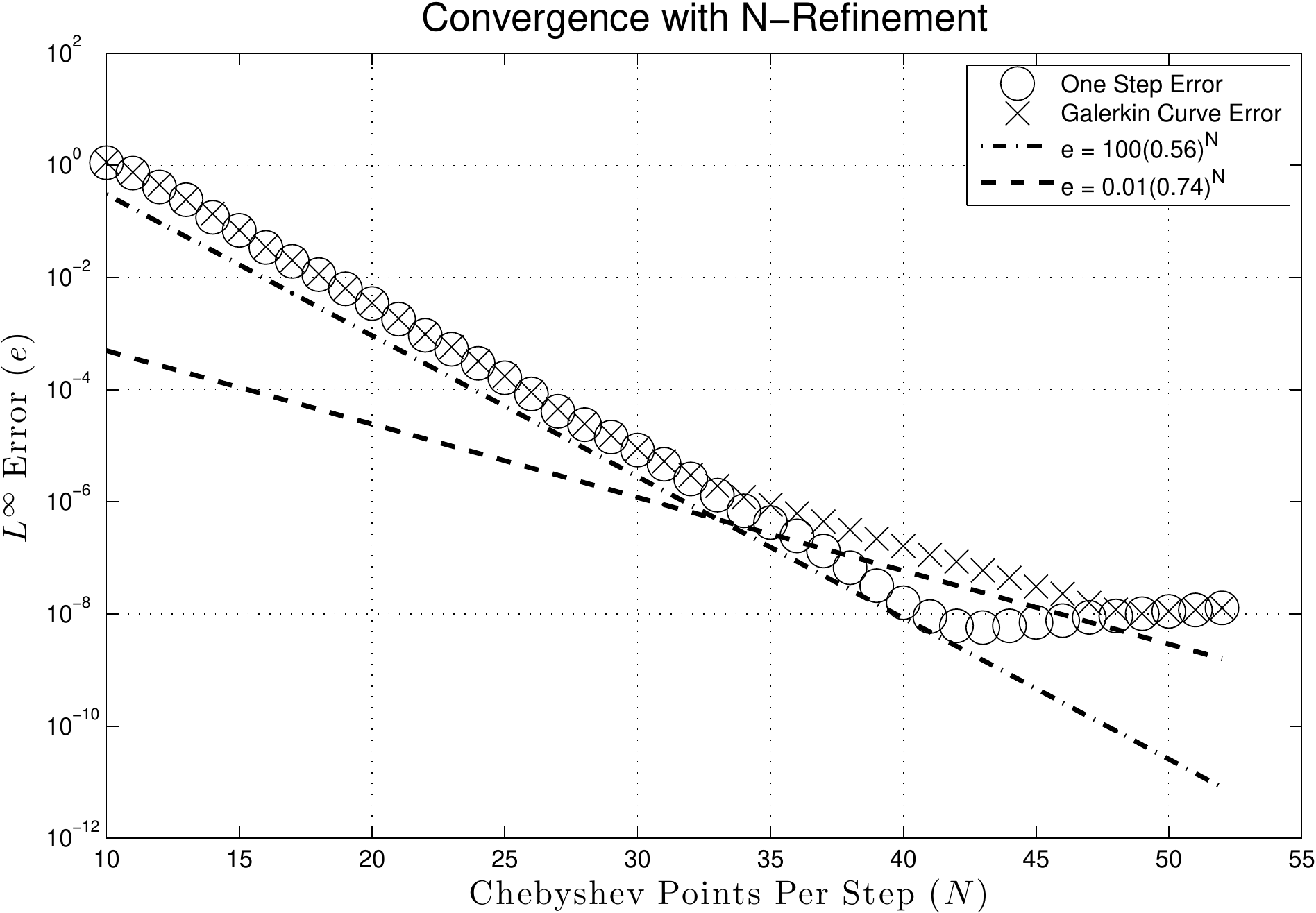}
  \caption{Geometric convergence of the Kepler 2-body problem with eccentricity 0.6 over 100 steps of \(h = 2.0\). Note that around \(n=32\), the error for the Galerkin curves becomes \(\mathcal{O}\lefri{0.74^{n}}\), while the error for the one-step map is always \(\mathcal{O}\lefri{0.56^{n}}\).}
  \label{fig:Kepler2bodyNRefinement}
\end{figure}

\begin{figure}[p]
  \centering
  \includegraphics[width = 0.75\textwidth]{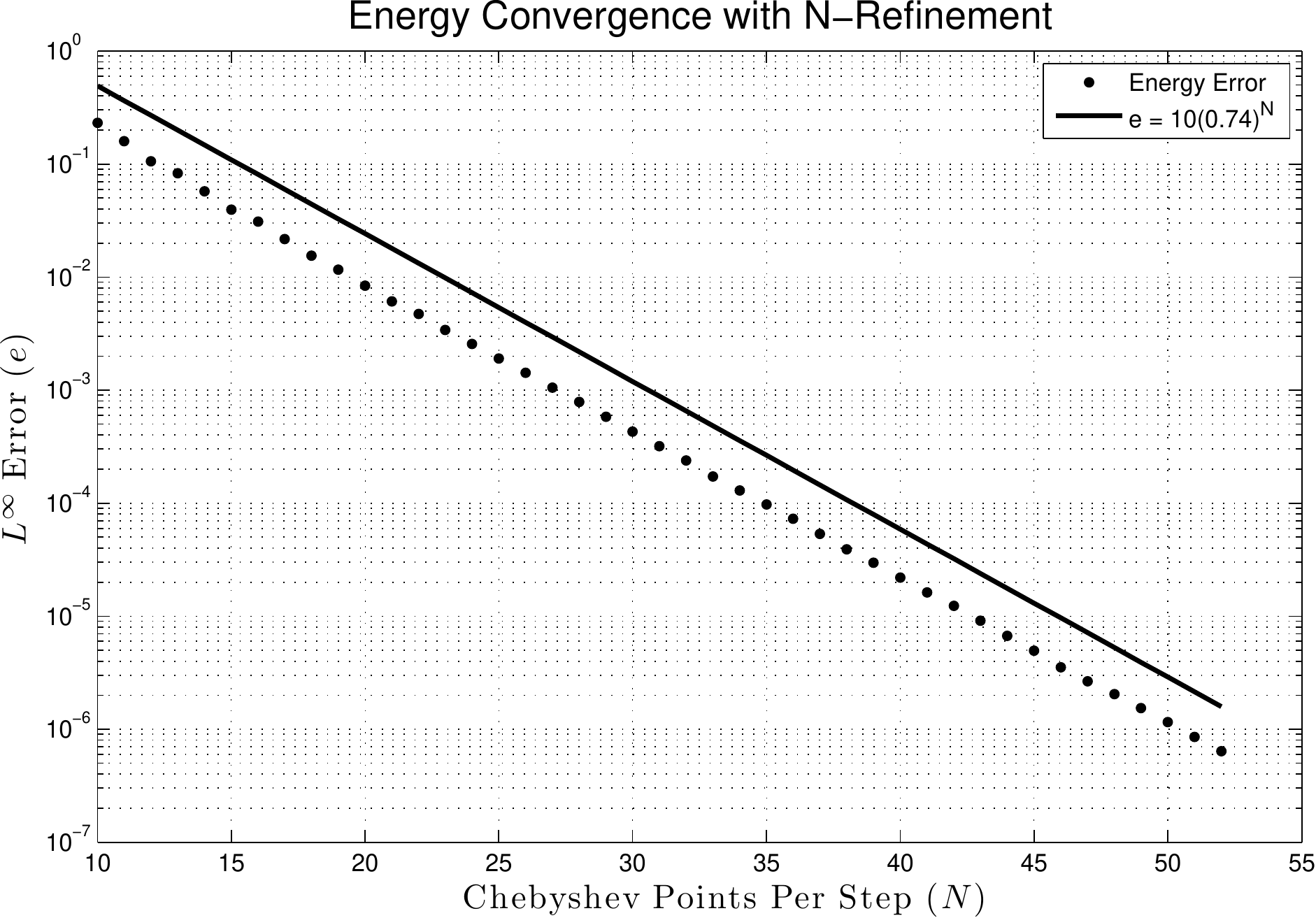}
  \caption{Geometric convergence of the Energy Error of Kepler 2-body problem with eccentricity 0.6 over 100 steps of \(h = 2.0\). Note that the error is \(\mathcal{O}\lefri{0.74^{n}}\), the same as it was for the Galerkin curves.}
  \label{fig:Kepler2bodyNRefinement_Energy}
\end{figure}

\begin{figure}[p]
  \centering
  \includegraphics[width = 0.75\textwidth]{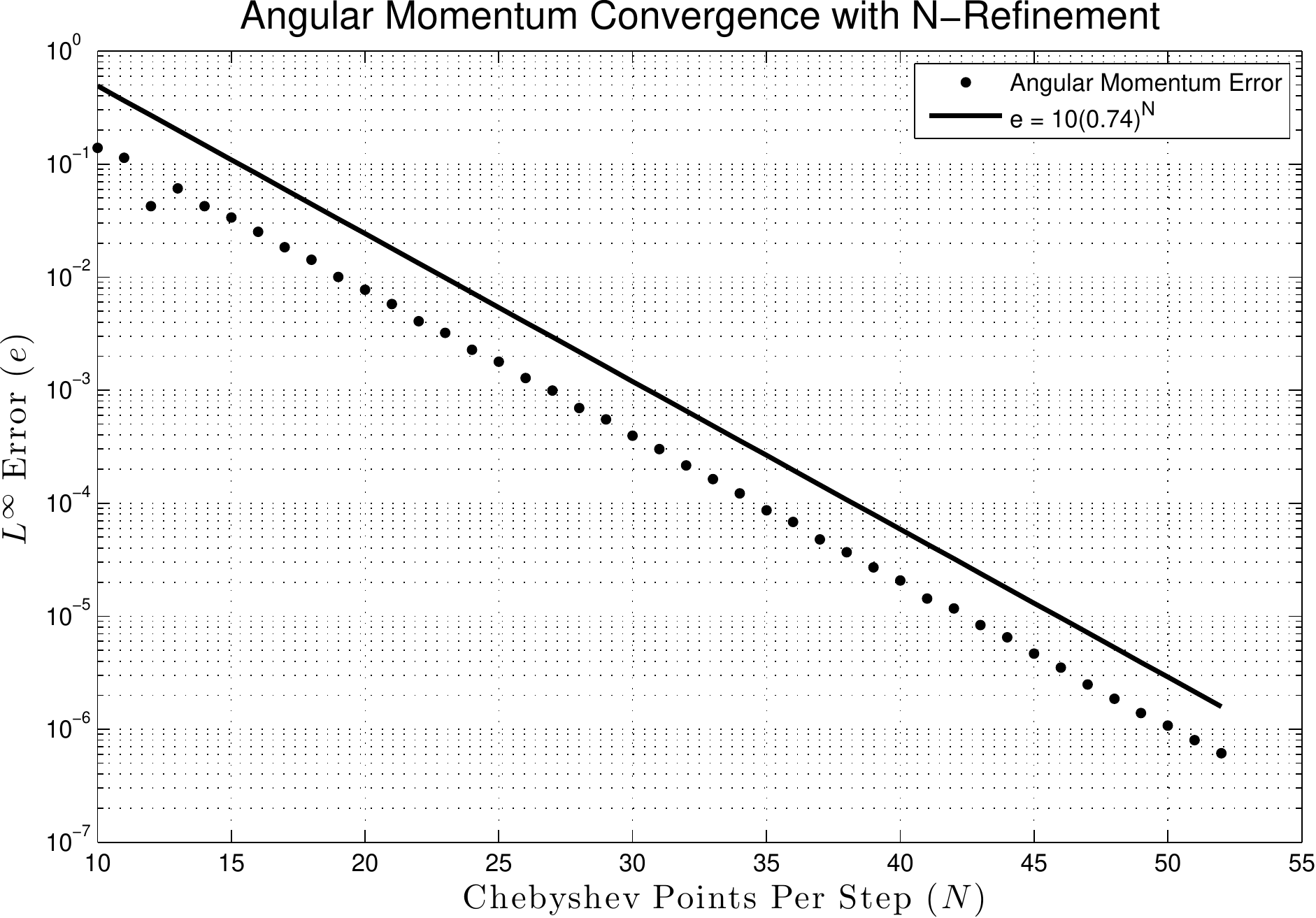}
  \caption{Geometric convergence of the angular momentum of the Kepler 2-body problem with eccentricity 0.6 over 100 steps of \(h = 2.0\). Again, the error is of the same order as it was the Galerkin curves.}
  \label{fig:Kepler2bodyNRefinement_Angular}
\end{figure}

\begin{figure}[p]
  \centering
  \includegraphics[width = 0.75\textwidth]{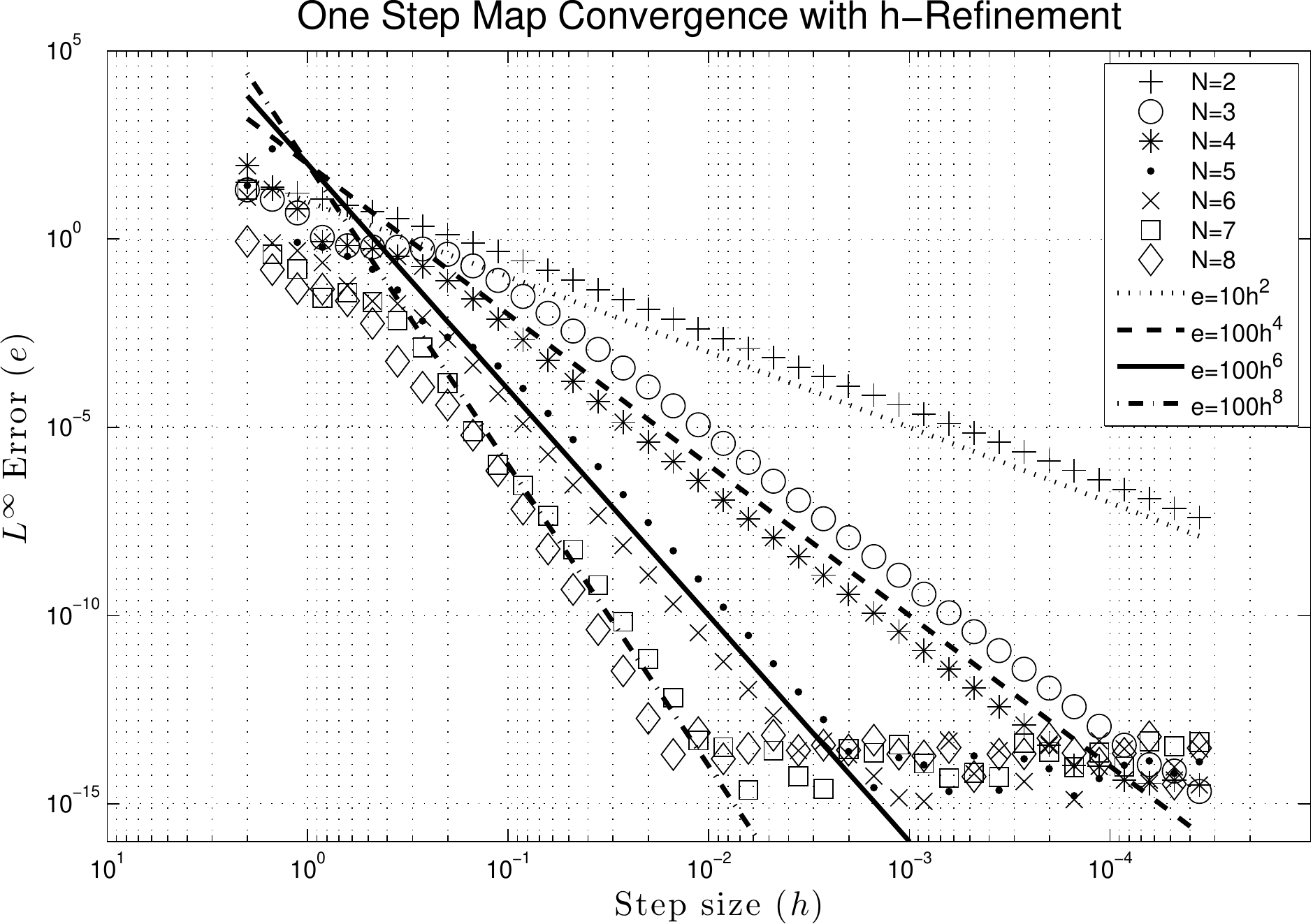}
  \caption{Order Optimal convergence of the Kepler 2-body problem with eccentricity 0.6 over 10 steps with \(h\) refinement. Note our bound is not sharp, as the error is \(\mathcal{O}\lefri{h^{2\left\lceil \frac{n}{2} \right\rceil}}\), where \(\left\lceil \cdot\right\rceil\) is the ceiling function.}
  \label{fig:Kepler2bodyhRefinement}
\end{figure}

\begin{figure}[p]
  \centering
  \includegraphics[width = 0.75\textwidth]{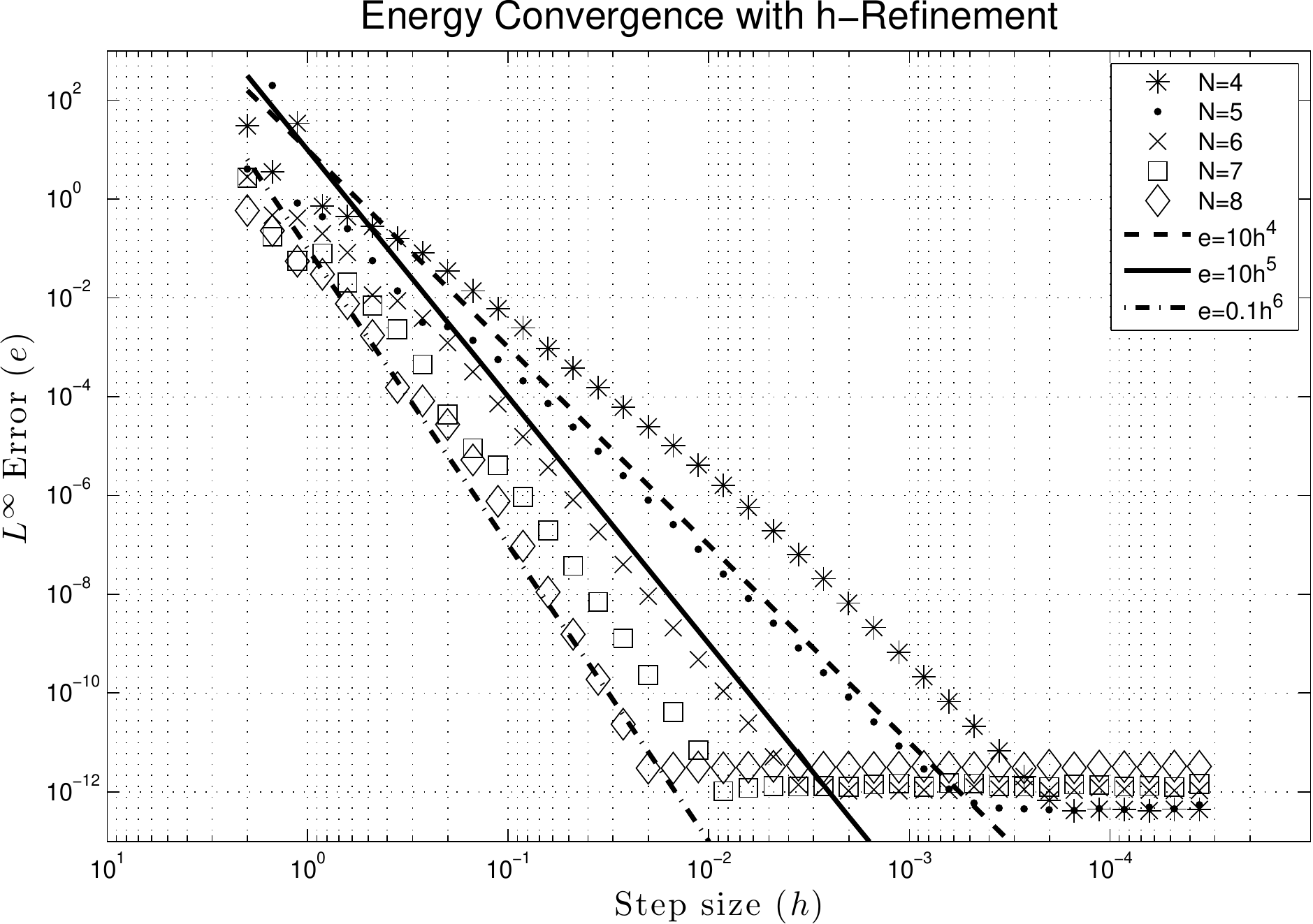}
  \caption{Convergence of the Kepler 2-body problem energy with eccentricity 0.6 over 10 steps with \(h\) refinement.}
  \label{fig:Kepler2bodyhRefinement_energy}
\end{figure}

\begin{figure}[p]
  \centering
  \includegraphics[width = 0.75\textwidth]{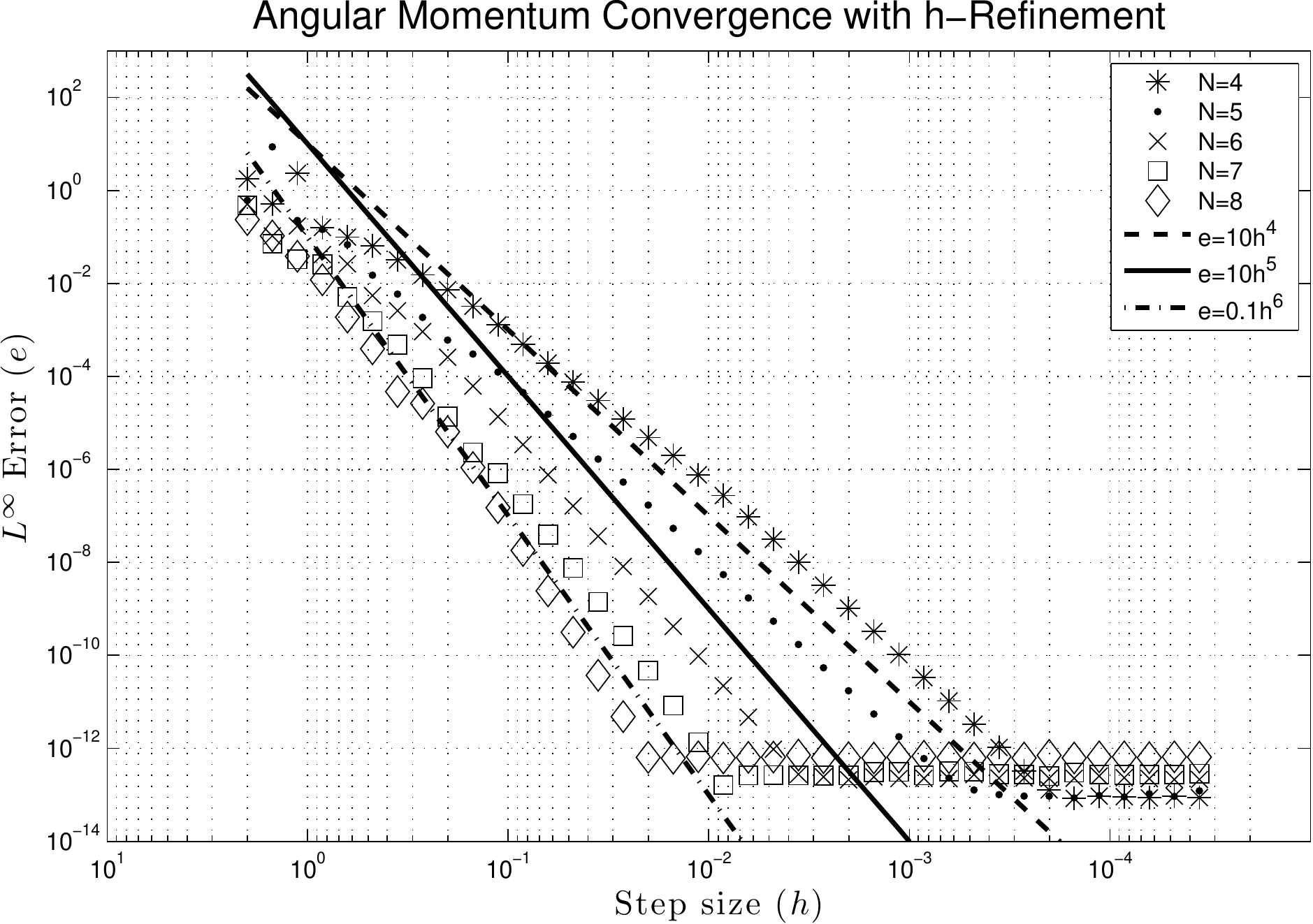}
  \caption{Convergence of the angular momentum Kepler 2-body problem with eccentricity 0.6 over 100 steps of \(h = 2.0\).}
  \label{fig:Kepler2bodyhRefinement_ang}
\end{figure}

\begin{figure}[htpb]
  \centering
  \includegraphics[width = 0.75\textwidth]{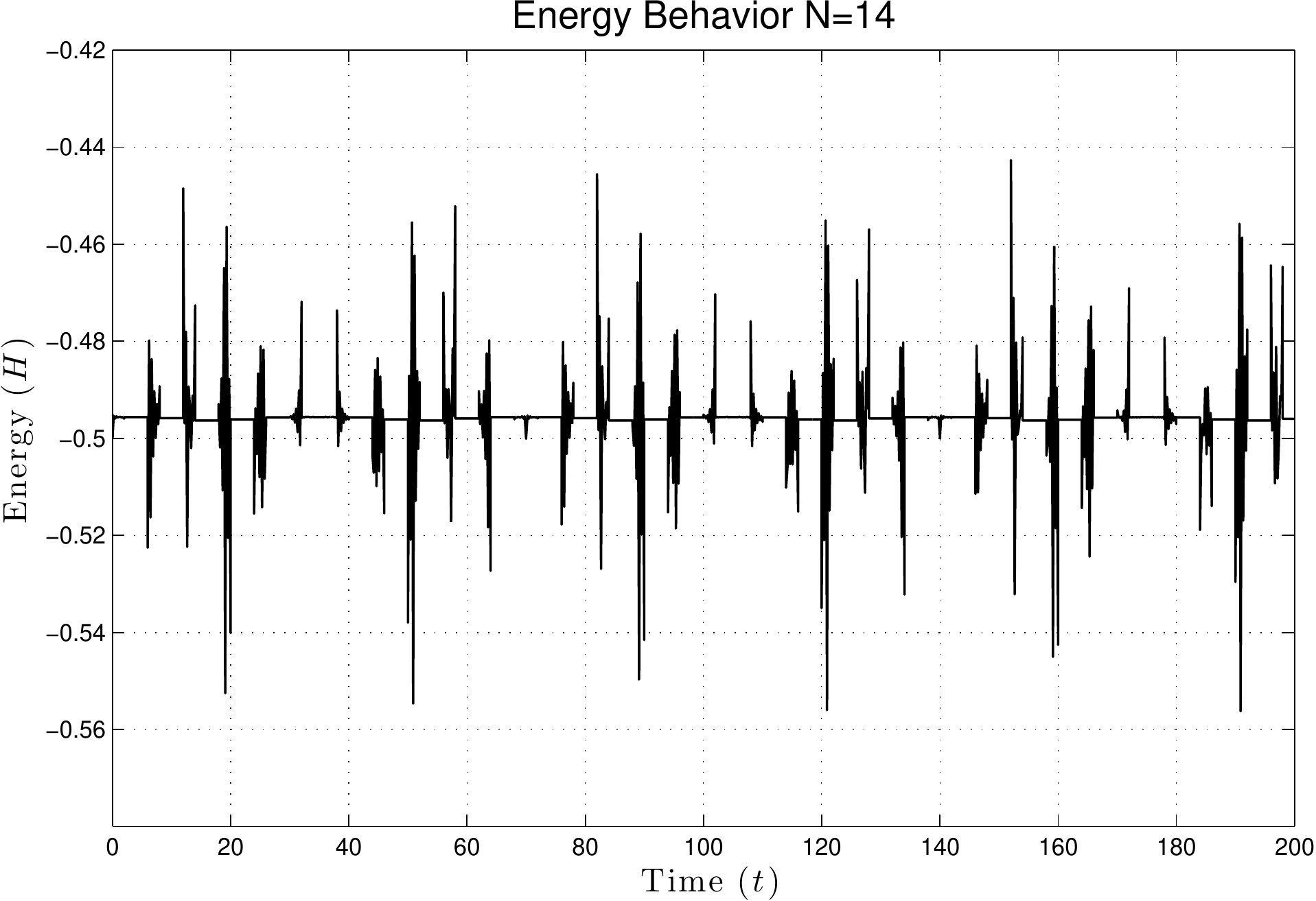}
  \caption{Stability of energy for Kepler 2-body problem.}
  \label{fig:Kepler2bodyEnergyStable}
\end{figure}


The first experiment we will examine is the choice of parameters \(D = 2\), \(m_{1} = m_{2} = 1\). Centering the coordinate system at \(q_{1}\),  we choose \(q_{2}\lefri{0} = \lefri{0.4,0}\), \(\dot{q}_{2}\lefri{0} = \lefri{0, 2}\), which has a known closed form solution which is a stable closed elliptical orbit with eccentricity \(0.6\). Knowing the closed form solution allows us to examine the rate of convergence to the true solution, and when solved with the large time step \(h = 2.0\), over 100 steps, the error of the one-step map is \(\mathcal{O}\lefri{0.56^{n}}\) with \(n\)-refinement and \(\mathcal{O}\lefri{h^{2\left\lceil\frac{n}{2}\right\rceil}}\) with \(h\)-refinement, as can be seen in Figure \ref{fig:Kepler2bodyNRefinement} and Figure \ref{fig:Kepler2bodyhRefinement}, respectively. The numerical evidence suggests that our bound for the one-step map with \(h\)-refinement is not sharp, as the convergence of the one-step map is always even. Interestingly, it is also possible to observe the different convergence rates of the one-step map and the Galerkin curves with \(n\)-refinement, as eventually the Galerkin curves have error approximately \(\mathcal{O}\lefri{0.74^{n}}\) while the one-step map has error approximately \(\mathcal{O}\lefri{0.56^{n}}\), and \(\sqrt{0.56} \approx 0.7483\). However, it appears that the error from the one step map dominates until very high choices of \(n\), and thus it is difficult to observe the error of the Galerkin curves directly with \(h\)-refinement, round off error becomes a problem before the error of the Galerkin curves does for smaller choices of \(n\).

The N-body Lagrangian is invariant under the action of \(\mbox{SO}\lefri{D}\), which yields the conserved Noether quantity of angular momentum. For the two body problem this is:
\begin{align*}
L\lefri{q,\dot{q}} =  q_{x}\dot{q}_{y} - q_{y}\dot{q}_{x}
\end{align*}%
where \(q = \lefri{q_{x},q_{y}}\). Numerical experiments show that the error of the angular momentum does not grow with the number of steps taken in the integration, Figure \ref{fig:Kepler2bodyLStable}, but that the error is of the same order as the error of Galerkin curve with \(n\)-refinement in Figure \ref{fig:Kepler2bodyNRefinement_Angular}. With \(h\)-refinement, the angular momentum appears to have error \(\mathcal{O}\lefri{h^{\frac{n}{2}} + 2}\) in Figure \ref{fig:Kepler2bodyhRefinement_ang}. This is interesting because the theoretical bound on the error of the Galerkin curves is \(\mathcal{O}\lefri{h^\frac{n}{2}}\), and the error of the Noether quantities is theoretically a factor \(C\lefri{h}\) times the error of the Galerkin curves, where \(C\) is the factor that arises in the proof of the convergence of the conserved Noether quantities. Numerical experiments suggest \(C\) is \(\mathcal{O}\lefri{h^{2}}\) for this problem, but that the Galerkin curves do converge at a rate of \(\mathcal{O}\lefri{h^{\frac{n}{2}}}\), which is consistent with of the Galerkin curve error estimate. While this evidence is not conclusive, it is suggestive that the error analysis provides a plausible bound. A careful analysis of the factor \(C\) would be an interesting direction for further investigation.

\subsubsection{The Solar System}

To illustrate the excellent stability proprieties of spectral variational integrators, we let \(D = 3\), \(N = 10\), and use the velocities, positions, and masses of the sun, 8 planet, and the dwarf planet Pluto on January 1, 2000 (as provided by the JPL Solar System ephemeris \cite{JPLEph}) as initial configuration parameters for the Kepler system. Taking \(100\) time steps of \(h = 100\) days, the \(N = 25\) spectral variational integrator produces a highly stable flow in Figure \ref{fig:InnerSolar}. It should be noted that orbits are closed, stable, and exhibit almost none of the ``precession'' effects that are characteristic of symplectic integrators, even though the time step is larger than the orbital period of Mercury. Additionally, considering just the outer solar system (Jupiter, Saturn, Uranus, Neptune, Pluto), and aggregating the inner solar system (Sun, Mercury, Venus, Earth, Mars) to a point mass, an \(N = 25\) spectral variational integrator taking 100 time steps \(h = 1825\) days (5 year steps) produces the orbital flow seen in Figure \ref{fig:OuterSolar}. Again, these are highly stable, precession free orbits. As can be clearly seen, the spectral variational integrator produces extremely stable flows, even for very large time steps.

\begin{figure}[t]
  \centering
  \includegraphics[width = 0.75\textwidth]{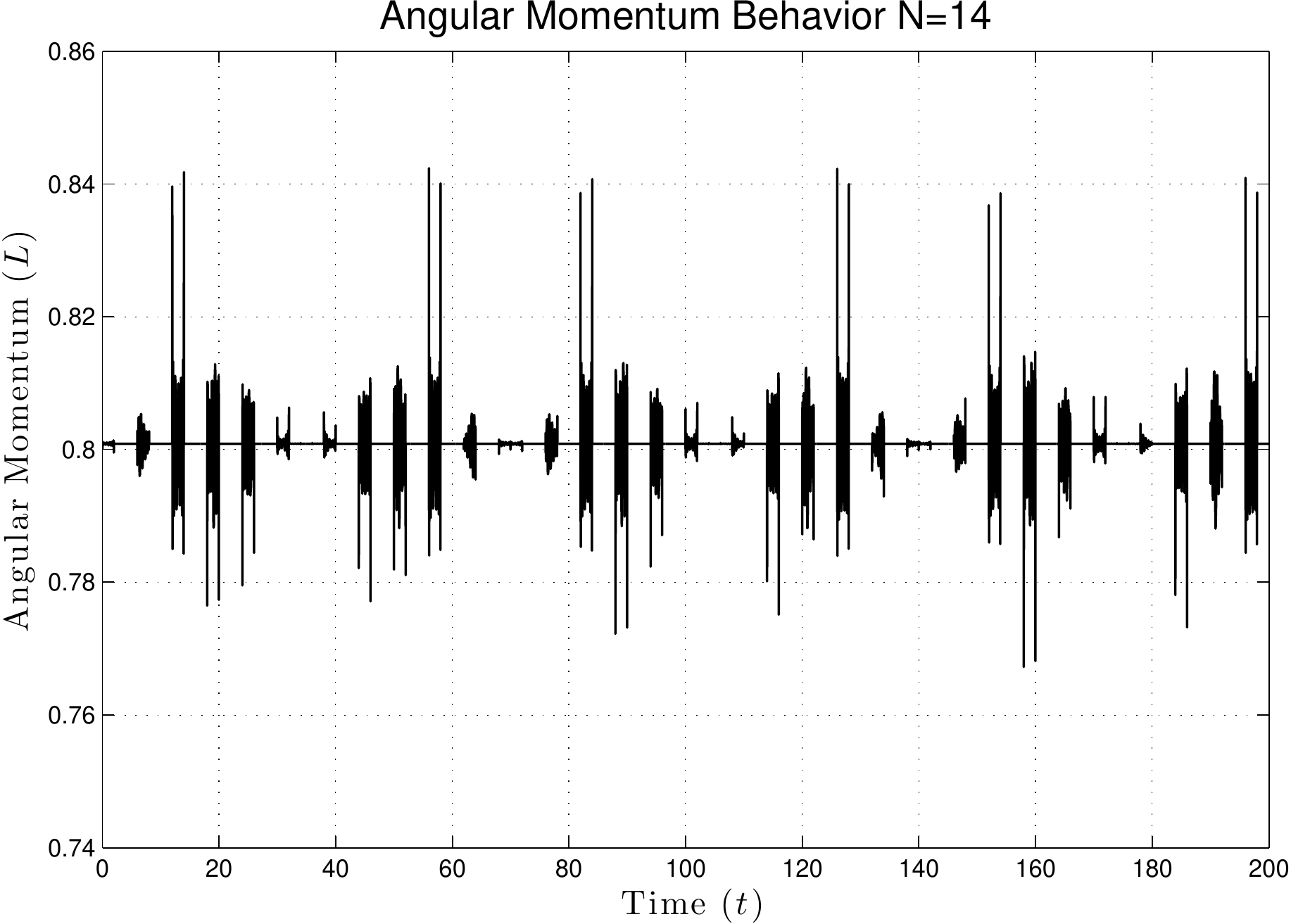}
  \caption{Stability of angular momentum for Kepler 2-body problem.}
  \label{fig:Kepler2bodyLStable}
\end{figure}

\afterpage{\clearpage}

\begin{figure}[p]
  \centering
  \includegraphics[width = 0.75\textwidth]{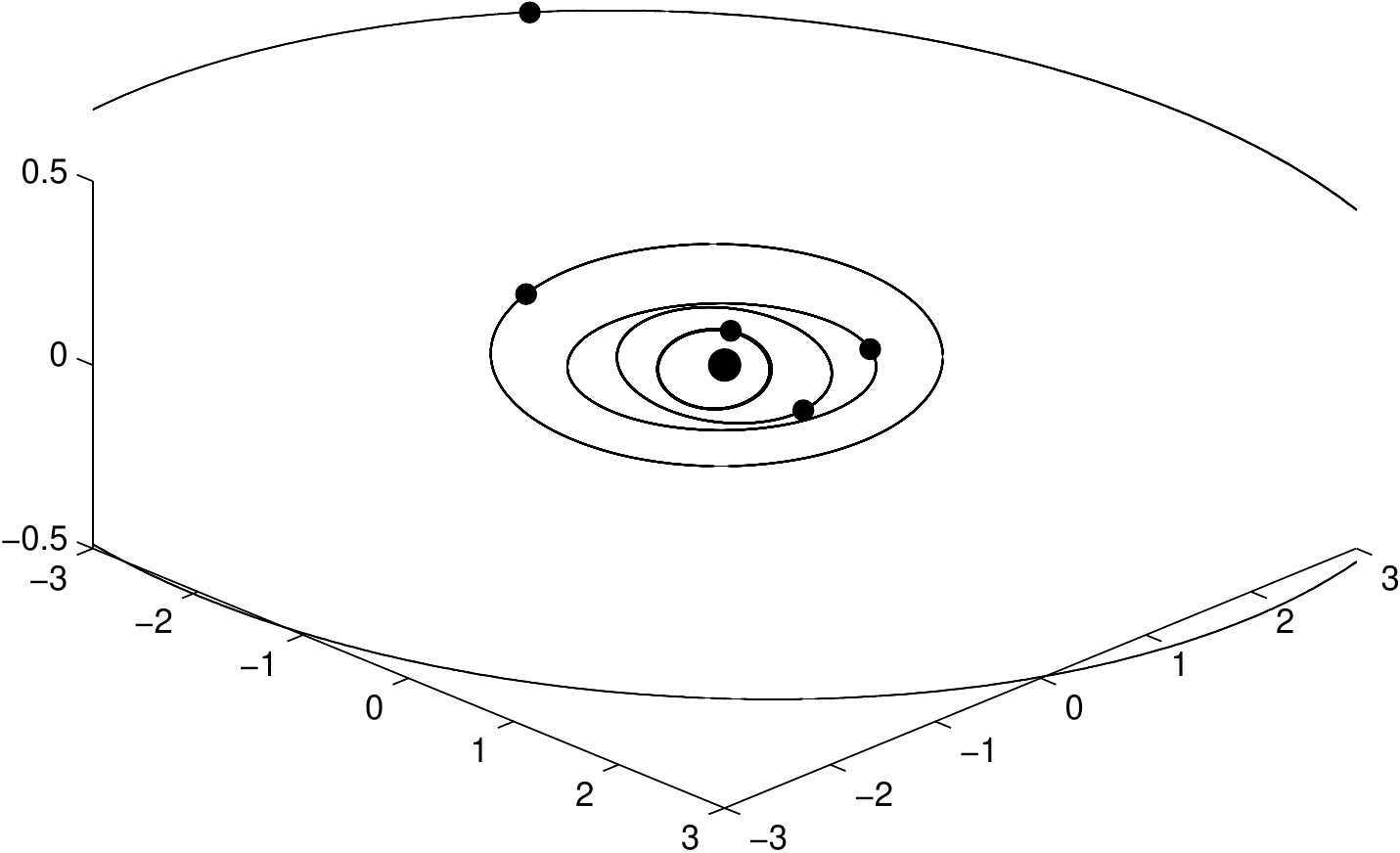}
  \caption{Orbital diagram for the inner solar produced by the spectral variational integrator using all 8 planets, the sun, and Pluto with 100 time steps with \(h=100\) days.}
  \label{fig:InnerSolar}
\end{figure}

\begin{figure}[p]
  \centering
  \includegraphics[width = 0.75\textwidth]{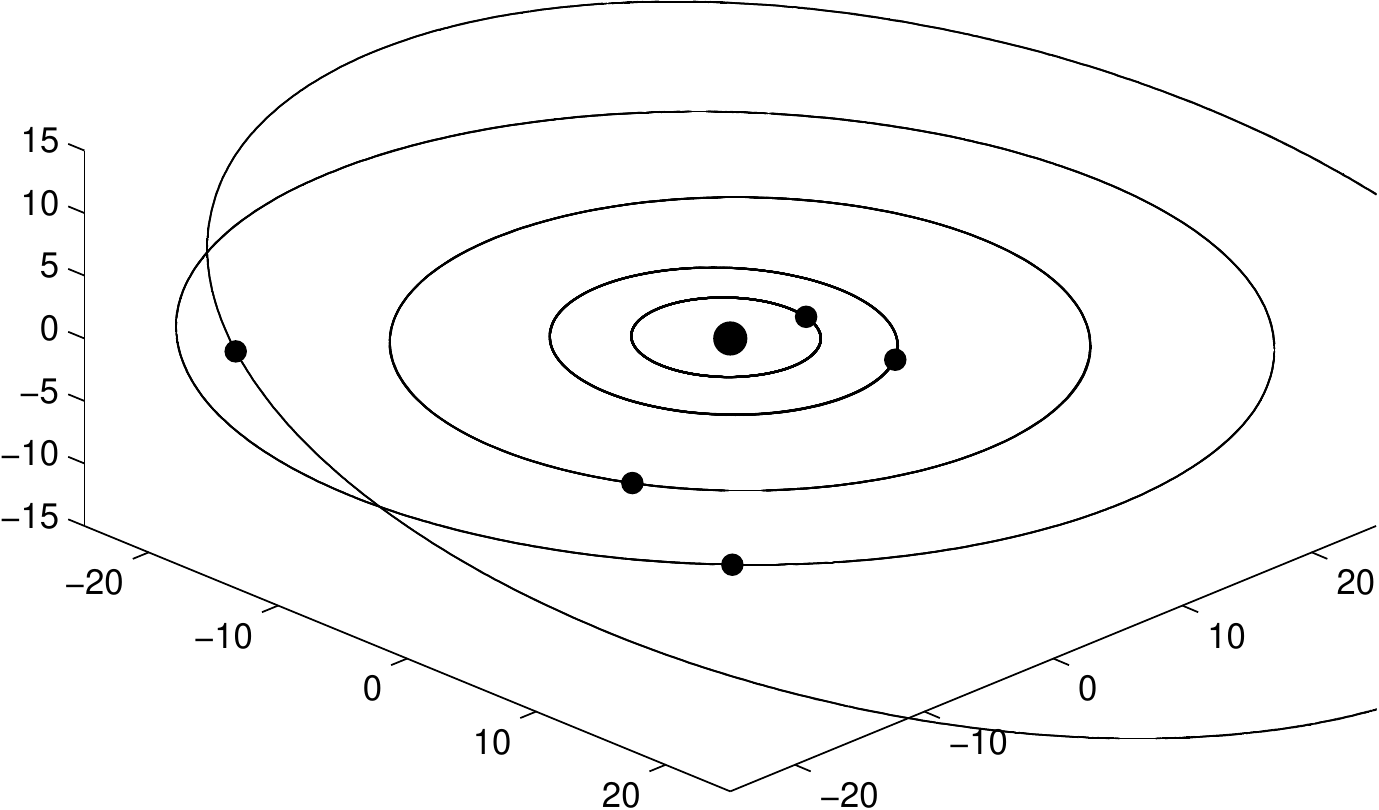}
  \caption{Orbital diagram for the inner solar produced by the spectral variational integrator using the 4 outer planets, Pluto, with the sun and 4 inner planets aggregated to a point with 100 time steps at \(h=1825\) days.}
  \label{fig:OuterSolar}
\end{figure}

\section{Conclusions and Future Work}

In this paper a new numerical method for variational problems was introduced, specifically a symplectic momentum-preserving integrator that exhibits geometric convergence to the true flow of a system under the appropriate conditions. These integrators were constructed under the general framework of Galerkin variational integrators, and made use of the global function paradigm common to many different spectral methods.

Additionally, a general convergence theorem was established for Galerkin type variational integrators, establishing the important result that, under suitable hypotheses, Galerkin variational integrators will inherit the optimal order of convergence permitted by the underlying approximation space used in their construction. This result provides a powerful tool for both constructing and analyzing variational integrators, it provides a methodology for constructing methods of very high order of accuracy, and it also establishes order of convergence for methods that can be viewed as Galerkin variational integrators. It was shown that from the one step map, a continuous approximation to the solution of the Euler-Lagrange equations can be easily recovered over each time step. The error of these continuous approximations was shown to be related to the error of the one step map. Furthermore, the Noether quantities along this continuous approximation approximate the true Noether quantity up to a small error which does not grow with the number of steps taken. It was also shown that the error of the Noether quantities converges to zero with \(n\) or \(h\) refinement at a predictable rate.

In addition to the convergence results, another interesting feature of spectral variational integrators is the construction of very high order methods that remain stable and accurate using time steps that are orders of magnitude larger than can be tolerated by traditional integrators. The trade off is that the computational effort required to compute each time step is also orders of magnitude larger than that of other methods, which are a major trade off in terms of the practicality of spectral variational integrators. However, a mitigating factor of this trade off is that the approach of solving a short sequence of large problems, as opposed to a large sequence of small problems, lends itself much better to parallelization and computational acceleration. The literature on methods for acceleration of the construction and solution of structured systems of linear and nonlinear problems for PDE problems is extensive, and it is likely that such methods could be applied to spectral variational integrators to greatly improve their computation cost.

\subsection{Future Work}

Future directions for this work are numerous. Because of generality of the construction of Galerkin variational integrators, there exists many possible directions of further exploration.

\subsubsection{Lie Group Spectral Variational Integrators} Following the approach of \citet{LeSh2011b} or \citet{BoMa2009}, it is relatively straight forward to extend spectral variational integrators to Lie groups using natural charts. A systematic investigation of the resulting Lie group methods, including convergence and near conservation of Noether quantities, would be a natural extension of the work done here.

\subsubsection{Novel Variational Integrators} The power of the Galerkin variational framework is its high flexibility in the choice of approximation spaces and quadrature rules used to construct numerical methods. This flexibility allows for the construction of novel methods specifically tailored to certain applications. One immediate example is the use of periodic functions to construct methods for detecting choreographies in Kepler problems, which would be closely related to methods already used with great success to detect choreographies. Another interesting application would be the use of high order polynomials to develop integrators for high-order Lie group problems, such as the construction of Riemannian splines, which has a variety of applications in motion planning. Enriching traditional polynomial approximation spaces with highly oscillatory functions could be used to develop methods for problems with dynamics evolving on radically different time scales, which are also very challenging for traditional numerical methods. 

\subsubsection{Multisymplectic Variational Integrators} Multisymplectic geometry (see \citet{MaPaSh1998}) has become an increasingly popular framework for extending much of the geometric theory from classical Lagrangian mechanics to Lagrangian PDEs. The foundations for a discrete theory have been laid, and there have been significant results achieved in geometric techniques for structured problems such as elasticity, fluid mechanics, non-linear wave equations, and computational electromagnetism. However, there is still significant work to be done in the areas of construction of numerical methods, analysis of discrete geometric structure, and especially error analysis. Galerkin type methods have become a standard method in classical numerical PDE methods, such as Finite-Element Methods, Spectral, and Pseudospectral methods. The variational Galerkin framework could provide a natural framework for extending these classical methods to structure preserving geometric methods for PDEs, and the analysis of such methods will rely on the notion of the boundary Lagrangian (see \citet{VaLiLe2011}), which is the PDE analogue of the exact discrete Lagrangian.

\section*{Acknowledgements}
This work was supported in part by NSF Grants CMMI-1029445, DMS-1065972, and NSF CAREER Award DMS-1010687.
\bibliographystyle{plainnat}
\bibliography{HaLe2012_SVI}

\appendix
\section{Proofs of Geometric Convergence of Spectral Variational Integrators}%
As stated in \S\ref{ConSection}, it can be shown that spectral variational integrators converge geometrically to the true flow associated with a Lagrangian under the appropriate conditions. The proof is similar to that of order optimality, and is offered below.

However, before we offer a proof of the theorem, we must establish a result that extends Theorem \ref{MarsConv}. Specifically, we must show:
\begin{theorem} \emph{(Extension of Theorem \ref{MarsConv} to Geometric Convergence)} \label{MarsExt}%
Given a regular Lagrangian \(L\) and corresponding Hamiltonian \(H\), the following are equivalent for a discrete Lagrangian \(L_{d}\lefri{q_{0},q_{1},n}\):%
\begin{enumerate}
\item there exist a positive constant \(K\), where \(K < 1\), such that the discrete Hamiltonian map for \(L_{d}\lefri{q_{0},q_{h},n}\) has error \(\mathcal{O}\lefri{K^{n}}\), 
\item there exists a positive constant \(K\), where \(K < 1\), such that the discrete Legendre transforms of \(L_{d}\lefri{q_{0},q_{h},n}\) have error \(\mathcal{O}\lefri{K^{n}}\),
\item there exists a positive constant \(K\), where \(K < 1\), such that \(L_{d}\lefri{q_{0},q_{h},n}\) approximates the exact discrete Lagrangian \(L_{d}^{E}\lefri{q_{0},q_{h},h}\) with error \(\mathcal{O}\lefri{K^{n}}\).
\end{enumerate}%
\end{theorem}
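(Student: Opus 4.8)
The plan is to adapt, essentially line by line, the proof of Theorem~\ref{MarsConv} from \citet{MaWe2001}, with the single change that the bookkeeping parameter measuring the order of approximation is switched from $h^{p+1}$ to $K^{n}$. The structural input is the theorem of \citet{MaWe2001} that the exact discrete Lagrangian $L_{d}^{E}$ generates an exact sampling of the true flow through its discrete Legendre transforms; consequently the ``error of the discrete Hamiltonian map'' is the distance between $\tilde{F}_{L_{d}} = \mathbb{F}^{+}L_{d}\circ\lefri{\mathbb{F}^{-}L_{d}}^{-1}$ and $\tilde{F}_{L_{d}^{E}} = \mathbb{F}^{+}L_{d}^{E}\circ\lefri{\mathbb{F}^{-}L_{d}^{E}}^{-1}$, and all three statements are different ways of quantifying how far $L_{d}\lefri{q_{0},q_{h},n}$ is from $L_{d}^{E}$.

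First I would establish $(3)\Rightarrow(2)$: invoking the notion of equivalent discrete Lagrangians from \citet{MaWe2001}, an $\mathcal{O}\lefri{K^{n}}$ agreement of $L_{d}$ with $L_{d}^{E}$ (which, within that equivalence class, also controls the relevant derivatives) transfers to $D_{1}L_{d}$ and $D_{2}L_{d}$, hence to $\mathbb{F}^{\pm}L_{d}$ via the defining formulas $\mathbb{F}^{+}L_{d} = \lefri{q_{1}, D_{2}L_{d}}$ and $\mathbb{F}^{-}L_{d} = \lefri{q_{0}, -D_{1}L_{d}}$. Then $(2)\Rightarrow(1)$: writing $\tilde{F}_{L_{d}}$ and $\tilde{F}_{L_{d}^{E}}$ as the corresponding compositions, I would bound their difference using (i) a perturbation estimate, via the implicit function theorem, for $\lefri{\mathbb{F}^{-}L_{d}}^{-1}$ against $\lefri{\mathbb{F}^{-}L_{d}^{E}}^{-1}$, and (ii) the local Lipschitz continuity of $\mathbb{F}^{+}L_{d}^{E}$; each contributes only a multiplicative constant, so the composition inherits the $\mathcal{O}\lefri{K^{n}}$ bound. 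Finally $(1)\Rightarrow(3)$ runs the reconstruction in reverse: the Legendre transforms, hence a representative of $L_{d}$ in its equivalence class, are recovered from the discrete Hamiltonian map, and the same estimates read backwards give the $\mathcal{O}\lefri{K^{n}}$ bound on $L_{d}-L_{d}^{E}$. Every factor of $h$ appearing in the corresponding estimates of \citet{MaWe2001} is now a fixed constant rather than a vanishing parameter, so it is absorbed into the constant hidden in $\mathcal{O}\lefri{K^{n}}$ and never touches the geometric rate $K$ itself.

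The main obstacle is uniformity in $n$. In the $h$-refinement version of Theorem~\ref{MarsConv} all estimates are local and the constants come from bounds on the derivatives of $L$ over a fixed compact set swept out by curves of shrinking length; here $h$ is held fixed while $n\to\infty$, so I must check that the Lipschitz constants, the second-derivative bounds on $L$, and the radius on which the implicit function theorem applies to $\mathbb{F}^{-}L_{d}$ can all be chosen independently of $n$. This is precisely where the convergence already in hand is used: by Theorem~\ref{SpecConv} (or simply by the hypothesis that $L_{d}\to L_{d}^{E}$), the stationary curves $\galqn$ and their endpoint data lie in a single fixed closed and bounded neighborhood of the exact solution for all $n$ large enough, so every one of these quantities is controlled on that one neighborhood, uniformly in $n$. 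Once this uniformity is secured, the remainder is a routine transcription of the Marsden--West argument, which would be tedious to carry out in full here.
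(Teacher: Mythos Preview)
Your proposal is correct and follows essentially the same route as the paper: both transcribe the Marsden--West argument with $K^{n}$ in place of $h^{p+1}$, obtaining $(3)\Leftrightarrow(2)$ by differentiating the error function and $(2)\Leftrightarrow(1)$ via the composition $\tilde{F}_{L_{d}}=\mathbb{F}^{+}L_{d}\circ\lefri{\mathbb{F}^{-}L_{d}}^{-1}$ together with the implicit function theorem. One small caution: invoking Theorem~\ref{SpecConv} to secure uniformity in $n$ is circular, since that theorem appeals to the present one; the paper instead simply builds the uniformity into the hypothesis that the error function $e_{v}\lefri{q_{0},q_{h},n}$ is smooth in its first two arguments and bounded on a fixed compact set independently of $n$, which is the cleaner way to close the loop (and is effectively what your parenthetical ``or simply by the hypothesis that $L_{d}\to L_{d}^{E}$'' gestures at).
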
%
The proof of this theorem is a simple modification of the proof of Theorem \ref{MarsConv}, and is included here for completeness. For details, the interested reader is referred to \cite{MaWe2001}.
\begin{proof}
Since we are assuming that the time step \(h\) is being held constant, we will suppress it as an argument to the exact discrete Lagrangian, writing \(L^{E}_{d}\lefri{q_{0},q_{h}}\) for \(L^{E}_{d}\lefri{q_{0},q_{h},h}\). First, we will assume that \(L_{d}\lefri{q_{0},q_{h},n}\) approximates \(L_{d}\lefri{q_{0},q_{h}}\) with error \(\mathcal{O}\lefri{K^{n}}\) and show this implies the discrete Legendre transforms have error \(\mathcal{O}\lefri{K^{n}}\). By assumption, if \(L_{d}\lefri{q_{0},q_{h},n}\) has error \(\mathcal{O}\lefri{K^{n}}\), there exists a function which is smooth in its first two arguments \(e_{v}: Q \times Q \times \mathbb{N} \rightarrow \mathbb{R}\) such that:%
\begin{align*}
L_{d}\lefri{q_{0},q_{h},n} = L_{d}^{E}\lefri{q_{0},q_{h}} + K^{n}e_{v}\lefri{q_{0},q_{h},n},
\end{align*}%
with \(\left|e_{v}\lefri{q_{0},q_{h},n}\right| \leq C_{v}\) on \(U_{v}\). Taking derivatives with respect to the first argument yields:%
\begin{align*}
\mathbb{F}^{-}L^{n}_{d}\lefri{q_{0},q_{h}} = \mathbb{F}^{-}L_{d}^{E}\lefri{q_{0},q_{h}} + K^{n}D_{1}e_{v}\lefri{q_{0},q_{h},n},
\end{align*}%
and with respect to the second yields:%
\begin{align*}
\mathbb{F}^{+}L^{n}_{d}\lefri{q_{0},q_{h}} = \mathbb{F}^{+}L_{d}^{E}\lefri{q_{0},q_{h}} + K^{n}D_{2}e_{v}\lefri{q_{0},q_{h},n}.
\end{align*}%
Since \(e_{v}\) is smooth and bounded over the closed set \(U\), so are \(D_{1}e_{v}\) and \(D_{2}e_{v}\), yielding that the discrete Legendre transforms have error \(\mathcal{O}\lefri{K^{n}}\). Now, to show that if the discrete Legendre transforms have error \(\mathcal{O}\lefri{K^{n}}\), the discrete Lagrangian has error \(\mathcal{O}\lefri{K^{n}}\), we write:
\begin{align*}
e_{v}\lefri{q_{0},q_{h},n} &= \frac{1}{K^{n}}\left[L_{d}\lefri{q_{0},q_{h},n} - L^{E}_{d}\lefri{q_{0},q_{h}}\right],\\
D_{1}e_{v}\lefri{q_{0},q_{h},n} &= \frac{1}{K^{n}}\left[\mathbb{F}^{-}L_{d}\lefri{q_{0},q_{h},n} - \mathbb{F}^{-}L^{E}_{d}\lefri{q_{0},q_{h}}\right], \\
D_{2}e_{v}\lefri{q_{0},q_{h},n} &= \frac{1}{K^{n}}\left[\mathbb{F}^{+}L_{d}\lefri{q_{0},q_{h},n} - \mathbb{F}^{+}L^{E}_{d}\lefri{q_{0},q_{h}}\right].
\end{align*}
Since \(D_{1}e_{v}\) and \(D_{2}e_{v}\) are smooth and bounded on a bounded set, this implies there exists a function \(d\lefri{n}\) such that
\begin{align*}
\left\|e_{v}\lefri{q\lefri{0},q\lefri{h},n} - d\lefri{n}\right\| \leq C_{v},
\end{align*}
for some constant \(C_{v}\). This shows that the discrete Lagrangian is equivalent to a discrete Lagrangian with error \(\mathcal{O}\lefri{K^{n}}\). We note that the equivalence is a consequence of the fact that one can add a function of \(h\) or \(n\) to any discrete Lagrangian and the resulting discrete Euler-Lagrange equations and discrete Legendre Transforms are unchanged, hence the function \(d\lefri{n}\).

To show the equivalence of the discrete Hamiltonian map having error \(\mathcal{O}\lefri{K^{n}}\) and the discrete Legendre transforms having error \(\mathcal{O}\lefri{K^{n}}\), we recall expressions for the discrete Hamiltonian map for the discrete Lagrangian \(L_{d}\) and exact discrete Lagrangian \(L^{E}_{d}\):
\begin{align*}
F_{L_{d}} &= \mathbb{F}^{+}L_{d} \circ \lefri{\mathbb{F}^{-}L_{d}}^{-1}, \\
F_{L^{E}_{d}} &= \mathbb{F}^{+}L^{E}_{d} \circ \lefri{\mathbb{F}^{-}L^{E}_{d}}^{-1}.
\end{align*}
Now, we make use of the following consequence of the implicit function theorem: If we have smooth functions \(g_{1},g_{2}\) and the sequences of functions \(\left\{f_{1_{n}}\right\}_{n=1}^{\infty}\), \(\left\{f_{2_{n}}\right\}_{n=1}^{\infty}\), \(\left\{e_{1_{n}}\right\}_{n=1}^{\infty}\) and \(\left\{e_{2_{n}}\right\}_{n=1}^{\infty}\) such that
\begin{align*}
f_{1_{n}}\lefri{x} &= g_{1}\lefri{x} + K^{n}e_{1_{n}}\lefri{x}, \\
f_{2_{n}}\lefri{x} &= g_{2}\lefri{x} + K^{n}e_{2_{n}}\lefri{x},
\end{align*}
where \(\sup{\left\{\left\|e_{1_{n}}\right\|\right\}_{n=1}^{\infty}} < C_{1}\) and \(\sup{\left\{\left\|e_{2_{n}}\right\|\right\}_{n=1}^{\infty}} < C_{2}\) on compact sets, then
\begin{align}
f_{2_{n}}\lefri{f_{1_{n}}\lefri{x}} &= g_{2}\lefri{g_{1}\lefri{x}} + K^{n}e_{12_{n}}\lefri{x} \label{ImpFun1}\\
f_{1_{n}}^{-1}\lefri{y} &= g_{1}^{-1}\lefri{y} + K^{n}\bar{e}_{1_{n}}\lefri{y} \label{ImpFun2}
\end{align}
for some sequences of functions \(\left\{e_{12_{n}}\right\}_{n=1}^{\infty}\), \(\left\{\bar{e}_{1_{n}}\right\}_{n=1}^{\infty}\) where \(\sup{\left\{\left\|e_{12_{n}}\right\|\right\}_{n=1}^{\infty}} < C_{1}\) and \(\sup{\left\{\left\|\bar{e}_{1_{n}}\right\|\right\}_{n=1}^{\infty}} < C_{2}\) on compact sets.
 
It follows from (\ref{ImpFun1}) and (\ref{ImpFun2}) that if the discrete Legendre transforms have error \(\mathcal{O}\lefri{K^{n}}\), the discrete Hamiltonian map does as well. Finally, if we have a discrete Hamiltonian map has error \(\mathcal{O}\lefri{K^{n}}\), we use the identity
\begin{align*}
\lefri{\mathbb{F}^{-}L_{d}}^{-1}\lefri{q_{0},p_{0}} &= \lefri{q_{0}, \pi_{Q}\circ F_{L_{d}}\lefri{q_{0},p_{0}}}
\end{align*}
where \(\pi_{Q}\) is the projection map \(\pi_{Q}:\lefri{q,p} \rightarrow q\) and (\ref{ImpFun2}) to see that \(\mathbb{F}^{-}L_{d}\) is has error \(\mathcal{O}\lefri{K^{n}}\), and the identity:
\begin{align*}
\mathbb{F}^{+}L_{d} = F_{L_{d}} \circ \mathbb{F}^{-}L_{d},
\end{align*}
along with (\ref{ImpFun1}) to establish that \(F^{+}L_{d}\) also has error \(\mathcal{O}\lefri{K^{n}}\), which completes the proof.
\end{proof}%
This simple extension is a critical tool for establishing the geometric convergence of spectral variational integrators, and leads to the following theorem concerning the accuracy of spectral variational integrators.%
%
%
\begin{theorem} \emph{(Geometric Convergence of Spectral Variational Integrators)} \label{SpecConv_App}
Given an interval \(\left[0,h\right]\) and a Lagrangian \(L:TQ \rightarrow \mathbb{R}\), let \(\truq\) be the exact solution to the Euler-Lagrange equations, and \(\galqn\) be the stationary point of the spectral variational discrete action
\begin{align*}
\SDLN{q_{0}}{q_{h}}{n} = \ext_{\galargsf{q_{0}}{q_{h}}} \mathbb{S}_{d}\lefri{\left\{q_{i}\right\}_{i=1}^{n}}= \ext_{\galargsf{q_{0}}{q_{h}}} h\sum_{j=0}^{m_{n}} \bnj L\lefri{q_{n}\lefri{\cnjh},\dot{q}_{n}\lefri{\cnjh}}.
\end{align*}
If:%
\begin{enumerate}
\item there exists constants \(\ApproxC,\ApproxK\), \(\ApproxK < 1\), independent of \(n\), such that, for each \(n\), there exists a curve \(\optqn \in \mathbb{M}^{n}\lefri{\left[0,h\right],Q}\) such that,%
\begin{align*}
  \left\|\lefri{\truq,\dtruq} - \lefri{\optqn,\doptqn}\right\| & \leq  \ApproxC \ApproxK^{n},
\end{align*}
\item there exists a closed and bounded neighborhood \(U \subset TQ\) such that \(\lefri{\truq\lefri{t},\dtruq\lefri{t}} \in U\) and \(\lefri{\optqn\lefri{t},\doptqn\lefri{t}} \in U\) for all \(t\) and \(n\), and all partial derivatives of \(L\) are continuous on \(U\),
\item for the sequence of quadrature rules \(\mathcal{G}_{n}\lefri{f} = h\sum_{j=1}^{m_{n}} \bnj f\lefri{\cnjh} \approx \int_{0}^{h}f\lefri{t}\dt\)  there exists constants \(\QuadC\), \(\QuadK\), \(\QuadK < 1\), independent of \(n\) such that:%
\begin{align*}
  \left|\int_{0}^{h} L\lefri{q_{n}\lefri{t}, \dot{q}_{n}\lefri{t}}\dt - h\sum_{j=1}^{m_{n}} \bnj L\lefri{q_{n}\lefri{\cnjh},\dot{q}_{n}\lefri{\cnjh}}\right| \leq \QuadC \QuadK^{n},
\end{align*}%
for any \(q_{n}\in \mathbb{M}^{n}\lefri{\left[0,h\right],Q}\),
\item and the stationary points \(\truq\), \(\galqn\) minimize their respective actions,%
\end{enumerate}%
then%
\begin{align}
  \left|\EDL{q_{0}}{q_{1}} - \SDLN{q_{0}}{q_{1}}{n}\right| \leq \SpecC\SpecK^{n} \label{GeoBound_App}
\end{align}%
for some constants \(\SpecC,\SpecK\), \(\SpecK < 1\), independent of \(n\), and hence the discrete Hamiltonian flow map has error \(\mathcal{O}\lefri{\SpecK^{n}}\).%
\end{theorem}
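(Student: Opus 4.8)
The plan is to imitate the proof of Theorem~\ref{OptConv} essentially line by line, substituting the geometric rates supplied by hypotheses (1) and (3) for the algebraic rate $h^{n+1}$, and invoking Theorem~\ref{MarsExt} in place of Theorem~\ref{MarsConv} at the very end. First I would unfold the exact and spectral discrete Lagrangians and insert the action evaluated along the best-approximation curve $\optqn \in \FdFSpace$ (taken, as for the Chebyshev interpolant of $\truq$, to satisfy $\optqn\lefri{0} = q_{0}$, $\optqn\lefri{h} = q_{h}$), so that the triangle inequality gives
\begin{align*}
\left|\EDL{q_{0}}{q_{h}} - \SDLN{q_{0}}{q_{h}}{n}\right| \leq \left|\int_{0}^{h} L\lefri{\truq,\dtruq}\dt - \int_{0}^{h} L\lefri{\optqn,\doptqn}\dt\right| + \left|\int_{0}^{h} L\lefri{\optqn,\doptqn}\dt - h\sum_{j=1}^{m_{n}} \bnj L\lefri{\galqn\lefri{\cnjh},\dgalqn\lefri{\cnjh}}\right|.
\end{align*}

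For the first term, continuity of all partials of $L$ on the closed bounded set $U$ makes $L$ Lipschitz on $U$ with some constant $\LagLipC$; since $\lefri{\truq,\dtruq},\lefri{\optqn,\doptqn}\in U$, hypothesis (1) yields $\left|L\lefri{\truq,\dtruq} - L\lefri{\optqn,\doptqn}\right| \leq \LagLipC\ApproxC\ApproxK^{n}$ pointwise, whence the first term is at most $\LagLipC\ApproxC h\,\ApproxK^{n}$, which (with $h$ fixed) is $\mathcal{O}\lefri{\ApproxK^{n}}$. The second term is the heart of the argument and is handled by the two-sided ``sandwich'': using hypothesis (4) that $\galqn$ minimizes the discrete action, that $\optqn$ is admissible, and the quadrature estimate (3) applied to $\optqn$, one gets the upper bound
\begin{align*}
h\sum_{j=1}^{m_{n}} \bnj L\lefri{\galqn,\dgalqn} \;\leq\; h\sum_{j=1}^{m_{n}} \bnj L\lefri{\optqn,\doptqn} \;\leq\; \int_{0}^{h} L\lefri{\optqn,\doptqn}\dt + \QuadC\QuadK^{n},
\end{align*}
and for the lower bound one chains the quadrature estimate applied to $\galqn$, the minimization of the continuous action by $\truq$, and the first-term bound:
\begin{align*}
h\sum_{j=1}^{m_{n}} \bnj L\lefri{\galqn,\dgalqn} \;\geq\; \int_{0}^{h} L\lefri{\galqn,\dgalqn}\dt - \QuadC\QuadK^{n} \;\geq\; \int_{0}^{h} L\lefri{\truq,\dtruq}\dt - \QuadC\QuadK^{n} \;\geq\; \int_{0}^{h} L\lefri{\optqn,\doptqn}\dt - \LagLipC\ApproxC h\,\ApproxK^{n} - \QuadC\QuadK^{n},
\end{align*}
so the second term is at most $\LagLipC\ApproxC h\,\ApproxK^{n} + \QuadC\QuadK^{n}$.

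Adding the two pieces gives $\left|\EDL{q_{0}}{q_{h}} - \SDLN{q_{0}}{q_{h}}{n}\right| \leq 2\LagLipC\ApproxC h\,\ApproxK^{n} + \QuadC\QuadK^{n}$; setting $\SpecK = \max\lefri{\ApproxK,\QuadK} < 1$ and $\SpecC = 2\LagLipC\ApproxC h + \QuadC$ collapses the two geometric rates into the single bound \eqref{GeoBound_App}, and Theorem~\ref{MarsExt} then upgrades this $\mathcal{O}\lefri{\SpecK^{n}}$ estimate on the discrete Lagrangian to the same estimate on the discrete Hamiltonian flow map. The main obstacle here is not analytic depth—the argument is structurally identical to Theorem~\ref{OptConv}—but rather the bookkeeping required to merge the two independent geometric constants $(\ApproxC,\ApproxK)$ and $(\QuadC,\QuadK)$ into one base $\SpecK < 1$, together with the (easily overlooked) point that the sandwich genuinely uses \emph{both} minimization hypotheses, for $\truq$ and for $\galqn$: dropping either one severs one side of the two-sided estimate on the second term.
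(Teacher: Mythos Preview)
Your proposal is correct and follows essentially the same approach as the paper's own proof: the same triangle-inequality split via the action on $\optqn$, the same Lipschitz estimate for the first term, the same two-sided sandwich (using both minimization hypotheses and the quadrature bound) for the second, and the same collapse to $\SpecK = \max\lefri{\ApproxK,\QuadK}$ followed by an appeal to Theorem~\ref{MarsExt}. Your explicit remark that $\optqn$ must share the boundary values $q_{0},q_{h}$ so that it is admissible in the discrete minimization is a point the paper uses tacitly.
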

%
%
\begin{proof}%
As before, we rewrite both the exact discrete Lagrangian and the spectral discrete Lagrangian:%
\begin{align*}
\left|\EDL{q_{0}}{q_{1}} - \SDLN{q_{0}}{q_{1}}{n}\right| &= \left|\int_{0}^{h}L\lefri{\truq\lefri{t},\dtruq\lefri{t}}\dt - \mathcal{G}_{n}\lefri{L\lefri{\galqn\lefri{t},\dgalqn\lefri{t}}}\right| \\
&= \left|\int_{0}^{h}L\lefri{\truq\lefri{t},\dtruq\lefri{t}}\dt - h\sum_{j=1}^{m_{n}}\bnj L\lefri{\galqn\lefri{\cnjh},\dgalqn\lefri{\cnjh}}\right| \\
&= \left|\int_{0}^{h}L\lefri{\truq,\dtruq}\dt - h\sum_{j=1}^{m_{n}}\bnj L\lefri{\galqn,\dgalqn}\right|,
\end{align*}%
with suppression of the \(t\) argument. We introduce the action evaluated on the curve \(\optqn\):%
\begin{align}
 \left|\int_{0}^{h}L\lefri{\truq,\dtruq}\dt - h\sum_{j=1}^{m_{n}}\bnj L\lefri{\galqn,\dgalqn}\right| & = \left|\int_{0}^{h}L\lefri{\truq,\dtruq}\dt - \int_{0}^{h} L\lefri{\optqn, \doptqn} \dt \right. \\
& \hspace{10em} \left. + \int_{0}^{h} L\lefri{\optqn,\doptqn}\dt - h\sum_{j=1}^{m_{n}}\bnj L\lefri{\galqn,\dgalqn}\right|  \nonumber \\
&\leq \left|\int_{0}^{h}L\lefri{\truq,\dtruq}\dt - \int_{0}^{h} L \lefri{\optqn, \doptqn}\dt\right| \\
& \hspace{10em} + \left|\int_{0}^{h} L \lefri{\optqn,\doptqn} \dt - h\sum_{j=1}^{m}\bnj L\lefri{\galqn,\dgalqn}\right|. \label{STwoTerms}
\end{align}
Considering the first term in (\ref{STwoTerms}):%
\begin{align*}
  \left|\int_{0}^{h}L\lefri{\truq,\dtruq}\dt - \int_{0}^{h} L \lefri{\optqn, \doptqn}\dt\right| &= \left|\int_{0}^{h}L\lefri{\truq,\dtruq} - L \lefri{\optqn, \doptqn} \dt\right| \\
& \leq  \int_{0}^{h} \left|L\lefri{\truq,\dtruq} - L\lefri{\optqn,\doptqn}\right|\dt. 
\end{align*}%
By assumption, all partials of \(L\) are continuous on \(U\), and since \(U\) is closed and bounded, this implies \(L\) is Lipschitz on \(U\), so let \(\LagLipC\) denote the Lipschitz constant. Since, again by assumption, \(\lefri{\truq,\dtruq} \in U\) and \(\lefri{\optqn,\doptqn} \in U\), we can obtain:%
\begin{align*}
\int_{0}^{h}\left|L\lefri{\truq,\dtruq} - L \lefri{\optqn,\doptqn}\right|\dt \leq&  \int_{0}^{h} \LagLipC \left|\lefri{\truq,\dtruq} - \lefri{\optqn,\doptqn}\right| \dt \\
\leq& \int_{0}^{h} \LagLipC \ApproxC \ApproxK^{n} \dt \\
=& h\LagLipC\ApproxC\ApproxK^{n}.
\end{align*} 
Hence,
\begin{align}
\left|\int_{0}^{h} L\lefri{\truq, \dtruq} \dt - \int_{0}^{h} L\lefri{\optqn,\doptqn} \dt\right| \leq h \LagLipC \ApproxC \ApproxK^{n}. \label{SFirstTermIneq}
\end{align}%
Next, considering the second term in (\ref{STwoTerms}),%
\begin{align*}
\left|\int_{0}^{h} L\lefri{\optqn,\doptqn} \dt - \sum_{j=1}^{m} h\bnj L\lefri{\galqn,\dgalqn} \right|,
\end{align*}%
since \(\galqn\) minimizes its action,%
\begin{align}
h\sum_{j=1}^{m_{n}} \bnj L\lefri{\galqn,\dgalqn} \leq h\sum_{j=1}^{m_{n}} \bnj L \lefri{\optqn,\doptqn} \leq \int_{0}^{h} L\lefri{\optqn,\doptqn} \dt + \QuadC \QuadK^{n} \label{SUpperBound}
\end{align}%
where the inequalities follow from the assumptions on the order of the quadrature rule and (\ref{SFirstTermIneq}). Furthermore,%
\begin{align}
h\sum_{j=1}^{m_{n}} \bnj L \lefri{\galqn,\dgalqn} \geq \int_{0}^{h} L\lefri{\galqn,\dgalqn} \dt - \QuadC\QuadK^{n} &\geq \int_{0}^{h} L\lefri{\truq,\dtruq}  \dt - \QuadC \QuadK^{n} \nonumber\\
&\geq \int_{0}^{h} L\lefri{\optqn,\doptqn} \dt - h\LagLipC \ApproxC \ApproxK^{n} - \QuadC \QuadK^{n}, \label{SLowerBound}
\end{align}%
where the inequalities follow from (\ref{SFirstTermIneq}), the order of the sequence of quadrature rules, and the assumption that \(\truq\) minimizes its action. Putting (\ref{SUpperBound}) and (\ref{SLowerBound}) together, we can conclude:%
\begin{align}
\left|\int_{0}^{h}L\lefri{\optqn,\doptqn}\dt - h\sum_{j=1}^{m_{n}} \bnj L\lefri{\galqn,\dgalqn}\right| \leq \lefri{h\LagLipC\ApproxC + \QuadC}\SpecK^{-n} \label{SSecondTermIneq}.
\end{align}%
where \(\SpecK = \max\lefri{\ApproxK,\QuadK}\). Now, combining the bounds (\ref{SFirstTermIneq}) and (\ref{SSecondTermIneq}) in (\ref{STwoTerms}), we can conclude
\begin{align*}
\left|\EDL{q_{0}}{q_{1}} - \SDLN{q_{0}}{q_{1}}{n}\right| \leq \lefri{2h\LagLipC\ApproxC + \QuadC}\SpecK^{-n}
\end{align*}
which, combined with Theorem \ref{MarsExt}, establishes the rate of convergence.
\end{proof}%
%
%

\end{document}